\journal{Applied Mathematics and Computation}
\begin{document}

\begin{frontmatter}
\title{Explicit formulas for determinantal representations of the Drazin inverse
solutions of some matrix and differential matrix equations. }
\author{Ivan Kyrchei}

\address{Pidstrygach Institute
for Applied Problems of Mechanics and Mathematics NAS of Ukraine,
str.Naukova 3b, Lviv, 79060, Ukraine,  kyrchei@lms.lviv.ua}

\begin{abstract} The Drazin inverse
solutions of the matrix equations ${\rm {\bf A}}{\rm {\bf X}} =
{\rm {\bf B}}$, ${\rm {\bf X}}{\rm {\bf A}} = {\rm {\bf B}}$ and
${\rm {\bf A}}{\rm {\bf X}}{\rm {\bf B}} ={\rm {\bf D}} $ are
considered in this paper. We use both the  determinantal representations of the Drazin inverse obtained earlier by the author and    in the paper. We get analogs of the Cramer rule for the Drazin inverse
solutions of these matrix equations and using their for determinantal representations  of solutions of some differential matrix equations, ${\bf X}'+ {\bf A}{\bf X}={\bf B}$ and ${\bf X}'+{\bf X}{\bf A}={\bf B}$, where the matrix ${\bf A}$ is singular.
\end{abstract}

\begin{keyword}
Drazin inverse \sep   matrix equation \sep Drazin inverse solution \sep Cramer rule \sep differential matrix equation

\MSC[2010] 15A15

\end{keyword}
\end{frontmatter}

\section{Introduction}
\newtheorem{definition}{Definition}[section]
\newtheorem{lemma}{Lemma}[section]
 \newtheorem{corollary}{Corollary}[section]
\newtheorem{theorem}{Theorem}[section]
\newdefinition{remark}{Remark}[section]

\newcommand{\rank}{\mathop{\rm rank}\nolimits}
In this paper we shall adopt the following notation. Let ${\mathbb
C}^{m\times n} $ be the set of $m$ by $n$ matrices with complex
entries and ${\rm \bf I}_{m}$ be the identity matrix of order $m$.
Denote by ${\rm {\bf a}}_{.j} $ and ${\rm {\bf a}}_{i.} $ the
$j$th column  and the $i$th row of ${\rm {\bf A}}\in {\mathbb
C}^{m\times n} $, respectively.  Let ${\rm {\bf A}}_{.j} \left( {{\rm {\bf b}}} \right)$
denote the matrix obtained from ${\rm {\bf A}}$ by replacing its
$j$th column with the  vector ${\rm {\bf b}}$, and by ${\rm {\bf
A}}_{i.} \left( {{\rm {\bf b}}} \right)$ denote the matrix
obtained from ${\rm {\bf A}}$ by replacing its $i$th row with
${\rm {\bf b}}$.

Let $\alpha : = \left\{ {\alpha _{1} ,\ldots ,\alpha _{k}}
\right\} \subseteq {\left\{ {1,\ldots ,m} \right\}}$ and $\beta :
= \left\{ {\beta _{1} ,\ldots ,\beta _{k}}  \right\} \subseteq
{\left\{ {1,\ldots ,n} \right\}}$ be subsets of the order $1 \le k
\le \min {\left\{ {m,n} \right\}}$. Then ${\left| {{\rm {\bf
A}}_{\beta} ^{\alpha} } \right|}$ denotes the minor of ${\rm {\bf
A}}$ determined by the rows indexed by $\alpha$ and the columns
indexed by $\beta$. Clearly, ${\left| {{\rm {\bf A}}_{\alpha}
^{\alpha} } \right|}$ be a principal minor determined by the rows
and  columns indexed by $\alpha$. For $1 \leq k\leq n$, denote by
 \[\textsl{L}_{ k, n}: = {\left\{
{\,\alpha :\alpha = \left( {\alpha _{1} ,\ldots ,\alpha _{k}}
\right),\,{\kern 1pt} 1 \le \alpha _{1} \le \ldots \le \alpha _{k}
\le n} \right\}}\]
 the collection of strictly increasing sequences
of $k$ integers chosen from the set $\left\{ {1,\ldots ,n}
\right\}$. For fixed $i \in \alpha $ and $j \in \beta $, let
\[I_{k,\,m} {\left\{ {i} \right\}}: = {\left\{ {\,\alpha :\alpha
\in L_{k,m} ,i \in \alpha} \right\}}{\rm ,} \quad J_{k,\,n}
{\left\{ {j} \right\}}: = {\left\{ {\,\beta :\beta \in L_{k,n} ,j
\in \beta} \right\}}.\]

Matrix equation is one of the important study fields of linear
algebra. Linear matrix equations, such as
\begin{equation}\label{eq1:AX=B}
 {\rm {\bf A}}{\rm {\bf X}} = {\rm {\bf B}},
\end{equation}
\begin{equation}\label{eq1:XB=D}
 {\rm {\bf X}}{\rm {\bf A}} = {\rm {\bf B}},
\end{equation}
 and \begin{equation}\label{eq1:AXB=D}
 {\rm {\bf A}}{\rm {\bf X}}{\rm {\bf B}} = {\rm {\bf
D}},
\end{equation}
play an important role in linear system theory therefore a large
number of papers have presented several methods for investigating
these matrix equations (for example, see
\cite{da}-\cite{vet}).

In some situations,
however, people pay more attention to the Drazin inverse solutions
of  singular linear systems and matrix equations \cite{ch}-\cite{gu2}. Moreover, Xu Zhao-liang and Wang Guo-rong in  \cite{xu} proved that  the Drazin inverse solutions of the matrix equations (\ref{eq1:AX=B}), (\ref{eq1:XB=D}) and (\ref{eq1:AXB=D}) with some restricts are their unique solutions.
 The Cramer rule for the Drazin inverse solution of the restricted system of linear  equations are used in \cite{wer}- \cite{ha}.
The Cramer rules for solutions of the restricted matrix equations (\ref{eq1:AX=B}), (\ref{eq1:XB=D}) and
(\ref{eq1:AXB=D}), in particular for the Drazin inverse solution,  are established in \cite{wan1}-\cite{gu}.

In this paper, we obtain explicit formulas for determinantal representations of the Drazin inverse
solutions of the matrix equations
(\ref{eq1:AX=B}), (\ref{eq1:XB=D}) and (\ref{eq1:AXB=D}) and using their for determinantal representations of solutions of some differential matrix equations. The paper is based on principles used in  \cite{ky1}, where we obtained analogs
of the Cramer rule for the minimum norm least squares solutions of the matrix
equations (1), (2) and (3).
Liu et al. in  \cite{liu1} deduce the new determinantal
representations of ${\bf A}_{T,S}^{(2)}$ and the Cramer rule for the restricted matrix equation (\ref{eq1:AXB=D})  based on these principles as well. Since  the Drazin inverse and
the group inverse A are outer inverses ${\bf A}_{T,S}^{(2)}$ for some specific choice of $T$ and $S$, then the results obtained in \cite{liu1} generalize in some ways some results of the paper. But we get the more detailed representation of the Drazin inverse solutions, and therefore we can used their for determinantal representations  of solutions of some differential matrix equations.

The paper is organized as follows. We start with some
basic concepts and results about  the Drazin inverse in Section 2.  We use the determinantal representation of the Drazin inverse obtained in \cite{ky} and also  another determinantal representation is obtained in this section.
In Section 3, we derive explicit formulas for determinantal representations of the Drazin inverse
solutions   for the
matrix equations (\ref{eq1:AX=B}), (\ref{eq1:XB=D}) and
(\ref{eq1:AXB=D}). These formulas  generalize the well-known Cramer rule. In Section 4, we demonstrate their using  for determinantal representations  of solutions of some differential matrix equations, ${\bf X}'+ {\bf A}{\bf X}={\bf B}$ and ${\bf X}'+{\bf X}{\bf A}={\bf B}$, where the matrix ${\bf A}$ is singular.
In Section 5, we show  numerical examples to illustrate the main results.

\section{Determinantal representations of the Drazin
inverse} For any  matrix  ${\rm {\bf A}}\in {\mathbb C}^{n\times
n} $ with $ Ind{\kern 1pt} {\rm {\bf A}}=k$, where  a positive
integer $k =: Ind{\kern 1pt} {\rm {\bf A}}= {\mathop {\min}
\limits_{k \in N \cup {\left\{ {0} \right\}}} }{\kern 1pt}
{\left\{ {\rank{\rm {\bf A}}^{k + 1} = \rank{\rm {\bf A}}^{k}}
\right\}}$, the Drazin inverse  is the unique matrix ${\rm {\bf
X}}$ that satisfies the following three properties
 \begin{equation}\label{eq:Dr_prop}
\begin{array}{l}
  1)\,\,  {\rm {\bf A}}^{k+1}{\rm
{\bf X}}={\rm {\bf
         A}}^{k};\\
  2)\,\,{\rm {\bf X}}{\rm {\bf A}}{\rm {\bf X}}={\rm {\bf X}};\\
  3)\,\, {\rm
{\bf A}}{\rm {\bf X}}={\rm {\bf X}}{\rm {\bf A}}.
\end{array}
\end{equation}
It is denoted by ${\rm {\bf X}}={\rm {\bf A}}^{D}$.

 In particular, when $Ind{\kern 1pt} {\rm {\bf A}}=1$,
then the matrix ${\rm {\bf X}}$ in (\ref{eq:Dr_prop}) is called
the group inverse and is denoted by ${\rm {\bf X}}={\rm {\bf
A}}^{g }$.

If $Ind{\kern 1pt} {\rm {\bf A}}=0$, then ${\rm {\bf A}}$ is
nonsingular, and ${\rm {\bf A}}^{D}\equiv {\rm {\bf A}}^{-1}$.

\begin{remark} Since the equation 3) of (\ref{eq:Dr_prop}), the equation 1)  can be replaced by follows
\[ 1a)\,\,  {\rm {\bf X}}{\rm {\bf A}}^{k+1}={\rm {\bf
         A}}^{k}.\]
\end{remark}
The Drazin inverse can be represented explicitly by the Jordan
canonical form as follows.
\begin{theorem}\cite{ca} If ${\rm {\bf A}} \in {\mathbb C}^{n\times n}$ with
$Ind{\kern 1pt} {\rm {\bf A}} = k $ and
\[
{\rm {\bf A}} = {\rm {\bf P}}\begin{pmatrix}
  {\rm {\bf C}} & {\rm {\bf 0}} \\
  {\rm {\bf 0}}& {\rm {\bf N}}
\end{pmatrix} {\rm {\bf P}}^{ - 1}
\]
where ${\rm {\bf C}}$ is nonsingular and $\rank{\rm {\bf C}} =
\rank{\rm {\bf A}}^{k}$, and ${\rm {\bf N}}$ is nilpotent of order
$k$, then
\[
{\rm {\bf A}}^{D} = {\rm {\bf P}}\begin{pmatrix}
  \rm {\bf C}^{ - 1} & {\rm {\bf 0}} \\
  {\rm {\bf 0}}& {\rm {\bf 0}}
\end{pmatrix} {\rm {\bf P}}^{ - 1}.
\]
\end{theorem}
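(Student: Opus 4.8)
The plan is to take the displayed matrix as a candidate, call it ${\rm {\bf X}}$, and verify directly that it satisfies the three defining relations of (\ref{eq:Dr_prop}); since the Drazin inverse is by definition the unique solution of that system, this identifies ${\rm {\bf X}}$ with ${\rm {\bf A}}^{D}$. Set
\[
{\rm {\bf X}} := {\rm {\bf P}}\begin{pmatrix} {\rm {\bf C}}^{-1} & {\rm {\bf 0}} \\ {\rm {\bf 0}} & {\rm {\bf 0}} \end{pmatrix}{\rm {\bf P}}^{-1}.
\]
The only computational fact I need is that conjugation by ${\rm {\bf P}}$ preserves products and that powers of a block-diagonal matrix are taken blockwise; hence for each positive integer $m$,
\[
{\rm {\bf A}}^{m} = {\rm {\bf P}}\begin{pmatrix} {\rm {\bf C}}^{m} & {\rm {\bf 0}} \\ {\rm {\bf 0}} & {\rm {\bf N}}^{m} \end{pmatrix}{\rm {\bf P}}^{-1},
\]
and every product of ${\rm {\bf A}}$ with ${\rm {\bf X}}$ can be evaluated block by block once the conjugating factors ${\rm {\bf P}}^{-1}{\rm {\bf P}} = {\rm {\bf I}}$ are cancelled.

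I would first dispose of relations 2) and 3), which use nothing beyond invertibility of ${\rm {\bf C}}$. For relation 3), both ${\rm {\bf A}}{\rm {\bf X}}$ and ${\rm {\bf X}}{\rm {\bf A}}$ reduce to ${\rm {\bf P}}\bigl(\begin{smallmatrix} {\rm {\bf I}} & {\rm {\bf 0}} \\ {\rm {\bf 0}} & {\rm {\bf 0}} \end{smallmatrix}\bigr){\rm {\bf P}}^{-1}$, since the top-left block is ${\rm {\bf C}}{\rm {\bf C}}^{-1} = {\rm {\bf C}}^{-1}{\rm {\bf C}} = {\rm {\bf I}}$ while the bottom-right block is a product of ${\rm {\bf N}}$ with the zero block in either order; commutativity follows at once. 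For relation 2), the product ${\rm {\bf X}}{\rm {\bf A}}{\rm {\bf X}}$ has top-left block ${\rm {\bf C}}^{-1}{\rm {\bf C}}{\rm {\bf C}}^{-1} = {\rm {\bf C}}^{-1}$ and a vanishing bottom-right block, so it equals ${\rm {\bf X}}$.

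The one step where the hypotheses genuinely matter is relation 1), and this is where I expect the only real content to sit. Forming ${\rm {\bf A}}^{k+1}{\rm {\bf X}}$ from the power formula gives top-left block ${\rm {\bf C}}^{k+1}{\rm {\bf C}}^{-1} = {\rm {\bf C}}^{k}$ and a zero bottom-right block, whereas ${\rm {\bf A}}^{k}$ carries the bottom-right block ${\rm {\bf N}}^{k}$. The two matrices coincide precisely when ${\rm {\bf N}}^{k} = {\rm {\bf 0}}$, which is exactly the hypothesis that ${\rm {\bf N}}$ is nilpotent of order $k$ (equivalently, the condition $Ind\,{\rm {\bf A}} = k$ read off the block form). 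Once relation 1) is confirmed, all three properties of (\ref{eq:Dr_prop}) hold, and uniqueness of the Drazin inverse yields ${\rm {\bf X}} = {\rm {\bf A}}^{D}$. The symmetric variant 1a) from the Remark could be checked identically by evaluating ${\rm {\bf X}}{\rm {\bf A}}^{k+1}$ instead.
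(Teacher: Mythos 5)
The paper states this result as a citation to Campbell--Meyer and supplies no proof of its own, so there is nothing internal to compare against; judged on its own terms, your verification is correct and is the standard argument. You check relations 2) and 3) of (\ref{eq:Dr_prop}) using only the invertibility of ${\rm {\bf C}}$, and correctly isolate relation 1) as the place where ${\rm {\bf N}}^{k}={\rm {\bf 0}}$ is needed, after which the uniqueness clause in the definition of the Drazin inverse finishes the identification. This is exactly how the result is established in the cited source, so no gap or divergence to report.
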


We use the following theorem about the limit representation of the
Drazin inverse.
\begin{theorem} \cite{ca}\label{theor:lim_draz} If ${\rm {\bf A}}
 \in {\mathbb C}^{n\times n}$, then
\[
{\rm {\bf A}}^{D} = {\mathop {\lim} \limits_{\lambda \to 0}}
\left( {\lambda {\rm {\bf I}}_n + {\rm {\bf A}}^{k + 1}} \right)^{
- 1}{\rm {\bf A}}^{k},
\]
where $k = Ind{\kern 1pt} {\rm {\bf A}}$, $\lambda \in {\mathbb R}
_{ +}  $, and ${\mathbb R} _{ +} $ is a set of the real positive
numbers.
\end{theorem}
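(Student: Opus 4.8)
The plan is to deduce the limit formula directly from the Jordan canonical form representation established in the preceding theorem. Writing ${\rm {\bf A}} = {\rm {\bf P}}\left(\begin{smallmatrix} {\rm {\bf C}} & {\rm {\bf 0}} \\ {\rm {\bf 0}} & {\rm {\bf N}} \end{smallmatrix}\right){\rm {\bf P}}^{-1}$ with ${\rm {\bf C}}$ nonsingular and ${\rm {\bf N}}$ nilpotent of order $k$, I would first record the powers block by block. Since conjugation by ${\rm {\bf P}}$ preserves the block-diagonal structure, and since ${\rm {\bf N}}^{k}={\rm {\bf 0}}$, both the lower-right blocks collapse:
\[
{\rm {\bf A}}^{k} = {\rm {\bf P}}\begin{pmatrix} {\rm {\bf C}}^{k} & {\rm {\bf 0}} \\ {\rm {\bf 0}} & {\rm {\bf 0}} \end{pmatrix}{\rm {\bf P}}^{-1}, \qquad {\rm {\bf A}}^{k+1} = {\rm {\bf P}}\begin{pmatrix} {\rm {\bf C}}^{k+1} & {\rm {\bf 0}} \\ {\rm {\bf 0}} & {\rm {\bf 0}} \end{pmatrix}{\rm {\bf P}}^{-1}.
\]

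Next I would assemble the perturbed matrix. Using ${\rm {\bf I}}_{n}={\rm {\bf P}}\,{\rm {\bf I}}_{n}\,{\rm {\bf P}}^{-1}$, the sum $\lambda{\rm {\bf I}}_{n}+{\rm {\bf A}}^{k+1}$ is again block diagonal with blocks $\lambda{\rm {\bf I}}+{\rm {\bf C}}^{k+1}$ and $\lambda{\rm {\bf I}}$. Because ${\rm {\bf C}}$ is nonsingular, the eigenvalues of $-{\rm {\bf C}}^{k+1}$ are finitely many and all nonzero, so $\lambda{\rm {\bf I}}+{\rm {\bf C}}^{k+1}$ is invertible for all sufficiently small $\lambda>0$; the lower block $\lambda{\rm {\bf I}}$ is invertible for every $\lambda\neq 0$. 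Inverting each block and multiplying by ${\rm {\bf A}}^{k}$ gives
\[
\left(\lambda{\rm {\bf I}}_{n}+{\rm {\bf A}}^{k+1}\right)^{-1}{\rm {\bf A}}^{k} = {\rm {\bf P}}\begin{pmatrix} \left(\lambda{\rm {\bf I}}+{\rm {\bf C}}^{k+1}\right)^{-1}{\rm {\bf C}}^{k} & {\rm {\bf 0}} \\ {\rm {\bf 0}} & {\rm {\bf 0}} \end{pmatrix}{\rm {\bf P}}^{-1}.
\]
Finally, letting $\lambda\to 0$, the surviving block tends to $\left({\rm {\bf C}}^{k+1}\right)^{-1}{\rm {\bf C}}^{k}={\rm {\bf C}}^{-1}$, which reproduces exactly the Jordan-form expression for ${\rm {\bf A}}^{D}$.

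The step I expect to be the crux is the lower-right corner. In the block inverse that corner equals $\lambda^{-1}{\rm {\bf I}}$, which diverges as $\lambda\to 0$; the whole argument hinges on observing that this divergent term is annihilated when multiplied by the zero lower-right block of ${\rm {\bf A}}^{k}$, a cancellation that is available precisely because ${\rm {\bf N}}$ is nilpotent of order $k$ (so that ${\rm {\bf N}}^{k}={\rm {\bf 0}}$ rather than merely ${\rm {\bf N}}^{k+1}={\rm {\bf 0}}$). I would therefore take care to form the product $\left(\lambda{\rm {\bf I}}_{n}+{\rm {\bf A}}^{k+1}\right)^{-1}{\rm {\bf A}}^{k}$ \emph{before} passing to the limit, and to justify the entrywise convergence of $\left(\lambda{\rm {\bf I}}+{\rm {\bf C}}^{k+1}\right)^{-1}$ by the continuity of matrix inversion at the nonsingular matrix ${\rm {\bf C}}^{k+1}$.
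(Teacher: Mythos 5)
Your argument is correct. Note that the paper itself gives no proof of this statement --- it is quoted verbatim from Campbell and Meyer \cite{ca} --- so there is no internal proof to compare against; your derivation from the Jordan-form representation of ${\rm {\bf A}}^{D}$ (the paper's Theorem 2.1, likewise taken from \cite{ca}) is sound and is essentially the standard argument for this limit formula. The two delicate points are handled properly: the matrix $\lambda {\rm {\bf I}} + {\rm {\bf C}}^{k+1}$ is invertible for all sufficiently small $\lambda>0$ because the finitely many eigenvalues of ${\rm {\bf C}}^{k+1}$ are all nonzero, and the divergent corner $\lambda^{-1}{\rm {\bf I}}$ of the block inverse is annihilated by the zero lower-right block of ${\rm {\bf A}}^{k}$, which is available precisely because ${\rm {\bf N}}^{k}={\rm {\bf 0}}$; forming the product before passing to the limit, as you insist on doing, is exactly what makes the argument legitimate. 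For completeness you might add one line noting that the degenerate cases where ${\rm {\bf C}}$ or ${\rm {\bf N}}$ is an empty block (i.e.\ ${\rm {\bf A}}$ nonsingular, so $k=0$, or ${\rm {\bf A}}$ nilpotent, so ${\rm {\bf A}}^{D}={\rm {\bf 0}}$) are covered vacuously, but nothing essential is missing.
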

The following  theorem can be obtained by analogy to Theorem
\ref{theor:lim_draz}.
\begin{theorem} \label{theor:lim_A_k_A_k+1} If ${\rm {\bf A}}
 \in {\mathbb C}^{n\times n}$, then
\[
{\rm {\bf A}}^{D} = {\mathop {\lim} \limits_{\lambda \to 0}}{\rm
{\bf A}}^{k} \left( {\lambda {\rm {\bf I}}_n + {\rm {\bf A}}^{k +
1}} \right)^{ - 1},
\]
where $k = Ind{\kern 1pt} {\rm {\bf A}}$, $\lambda \in {\mathbb R}
_{ +}  $, and ${\mathbb R} _{ +} $ is a set of the real positive
numbers.
\end{theorem}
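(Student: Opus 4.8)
The plan is to exploit the fact that the asserted expression differs from the one in Theorem \ref{theor:lim_draz} only in the order of the two factors, and that those factors commute. First I would observe that ${\rm {\bf A}}^{k}$ and $\lambda {\rm {\bf I}}_n + {\rm {\bf A}}^{k+1}$ are both polynomials in ${\rm {\bf A}}$, so they commute; consequently ${\rm {\bf A}}^{k}$ commutes with the inverse $\left(\lambda {\rm {\bf I}}_n + {\rm {\bf A}}^{k+1}\right)^{-1}$ as well. Indeed, the elementary identity ${\rm {\bf M}}{\rm {\bf N}} = {\rm {\bf N}}{\rm {\bf M}}$ with ${\rm {\bf N}}$ invertible yields ${\rm {\bf M}}{\rm {\bf N}}^{-1} = {\rm {\bf N}}^{-1}{\rm {\bf M}}$ after multiplying by ${\rm {\bf N}}^{-1}$ on the left and on the right. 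Applying this with ${\rm {\bf M}} = {\rm {\bf A}}^{k}$ and ${\rm {\bf N}} = \lambda {\rm {\bf I}}_n + {\rm {\bf A}}^{k+1}$ gives, for every admissible $\lambda$,
\[
{\rm {\bf A}}^{k}\left(\lambda {\rm {\bf I}}_n + {\rm {\bf A}}^{k+1}\right)^{-1} = \left(\lambda {\rm {\bf I}}_n + {\rm {\bf A}}^{k+1}\right)^{-1}{\rm {\bf A}}^{k}.
\]

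Then I would simply pass to the limit $\lambda \to 0$ on both sides. The right-hand side is precisely the quantity whose limit is identified with ${\rm {\bf A}}^{D}$ in Theorem \ref{theor:lim_draz}, so the left-hand side has the same limit, which is exactly the claim. The only points requiring a word of care are that $\lambda {\rm {\bf I}}_n + {\rm {\bf A}}^{k+1}$ is genuinely invertible for the relevant $\lambda \in {\mathbb R}_{+}$ (so that both sides are defined) and that the limit exists; but both are inherited verbatim from Theorem \ref{theor:lim_draz}, since at each $\lambda$ we are dealing with an expression literally equal to the one treated there. There is therefore no real obstacle: the entire content is the commutativity observation, which is what the phrase ``by analogy'' signals.

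As a self-contained alternative I could instead repeat the Jordan-form computation underlying the first theorem of this section. Writing ${\rm {\bf A}} = {\rm {\bf P}}\,{\rm diag}\left({\rm {\bf C}}, {\rm {\bf N}}\right){\rm {\bf P}}^{-1}$ with ${\rm {\bf C}}$ nonsingular of order $r = \rank {\rm {\bf A}}^{k}$ and ${\rm {\bf N}}^{k} = {\rm {\bf 0}}$, one gets ${\rm {\bf A}}^{k} = {\rm {\bf P}}\,{\rm diag}\left({\rm {\bf C}}^{k}, {\rm {\bf 0}}\right){\rm {\bf P}}^{-1}$ and $\lambda {\rm {\bf I}}_n + {\rm {\bf A}}^{k+1} = {\rm {\bf P}}\,{\rm diag}\left(\lambda {\rm {\bf I}}_r + {\rm {\bf C}}^{k+1}, \lambda {\rm {\bf I}}_{n-r}\right){\rm {\bf P}}^{-1}$. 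Multiplying and letting $\lambda \to 0$, the lower block dies and the upper block tends to ${\rm {\bf C}}^{k}\left({\rm {\bf C}}^{k+1}\right)^{-1} = {\rm {\bf C}}^{-1}$, recovering ${\rm {\bf P}}\,{\rm diag}\left({\rm {\bf C}}^{-1}, {\rm {\bf 0}}\right){\rm {\bf P}}^{-1} = {\rm {\bf A}}^{D}$.

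The main labor in this second route is the block bookkeeping and the check that $\lambda {\rm {\bf I}}_r + {\rm {\bf C}}^{k+1}$ stays invertible near $\lambda = 0$ (which holds because ${\rm {\bf C}}$, hence ${\rm {\bf C}}^{k+1}$, is nonsingular, so $0$ is not one of its eigenvalues). The commutativity route avoids this entirely, so I would present it as the primary argument and relegate the Jordan-form computation to a remark.
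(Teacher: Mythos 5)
Your proposal is correct. Note that the paper offers no proof of this statement at all: it merely remarks that the theorem ``can be obtained by analogy to Theorem \ref{theor:lim_draz}'', which implicitly invites the reader to rerun the Jordan-form (core--nilpotent) computation with the two factors interchanged --- essentially your second, ``self-contained'' route. Your primary argument is therefore genuinely different from, and sharper than, what the paper gestures at: by observing that ${\rm {\bf A}}^{k}$ and $\lambda {\rm {\bf I}}_n + {\rm {\bf A}}^{k+1}$ are polynomials in ${\rm {\bf A}}$ and hence commute, so that ${\rm {\bf A}}^{k}$ also commutes with $\left( \lambda {\rm {\bf I}}_n + {\rm {\bf A}}^{k+1} \right)^{-1}$, you show that the two expressions are \emph{identically equal} for every admissible $\lambda$, not merely that they have the same limit; Theorem \ref{theor:lim_A_k_A_k+1} then follows as an immediate corollary of Theorem \ref{theor:lim_draz} rather than as an analogue requiring a parallel derivation. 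What this buys is economy and rigor (no block bookkeeping, and invertibility for small $\lambda>0$ plus existence of the limit are inherited verbatim); what the Jordan-form route buys is independence from the cited theorem and an explicit view of why the limit equals ${\rm {\bf P}}\,{\rm diag}\left({\rm {\bf C}}^{-1}, {\rm {\bf 0}}\right){\rm {\bf P}}^{-1}$. Your choice to lead with the commutativity argument and relegate the computation to a remark is sound, and both of your routes are free of gaps.
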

 Denote by ${\rm {\bf a}}_{.j}^{(k)} $ and ${\rm {\bf
a}}_{i.}^{(k)} $ the $j$th column  and the $i$th row of ${\rm {\bf
A}}^{k} $ respectively.
\begin{lemma}(\cite{ky}, Lemma 3.1) \label{lem:rank_A_col} If ${\rm {\bf A}} \in {\mathbb C}^{n\times n}$
with $Ind{\kern 1pt} {\rm {\bf A}} = k $, then for all $i,j =
\overline {1,n}$
\[
\label{eq:rank_column} \rank{\rm {\bf A}}_{.{\kern 1pt} i}^{k + 1}
\left( {{\rm {\bf a}}_{.j}^{\left( {k} \right)}}  \right) \le
\rank{\rm {\bf A}}^{k + 1}.
\]
\end{lemma}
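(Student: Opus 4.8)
The plan is to exploit the defining property of the index $k$, namely that $\rank {\rm {\bf A}}^{k+1} = \rank {\rm {\bf A}}^{k}$, together with an inclusion of column spaces. First I would observe that since ${\rm {\bf A}}^{k+1} = {\rm {\bf A}}^{k}{\rm {\bf A}}$, the $j$th column of ${\rm {\bf A}}^{k+1}$ equals ${\rm {\bf A}}^{k}{\rm {\bf a}}_{.j}$ and is therefore a linear combination of the columns of ${\rm {\bf A}}^{k}$. Hence the column space of ${\rm {\bf A}}^{k+1}$ is contained in the column space of ${\rm {\bf A}}^{k}$. Because these two matrices have the same rank by the very choice of $k = Ind{\kern 1pt}{\rm {\bf A}}$, this inclusion must in fact be an equality of column spaces, each of dimension $\rank {\rm {\bf A}}^{k+1}$.

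Next I would examine the matrix ${\rm {\bf A}}_{.{\kern 1pt}i}^{k+1}\left({{\rm {\bf a}}_{.j}^{(k)}}\right)$ column by column. Every column other than the $i$th is a column of ${\rm {\bf A}}^{k+1}$ and so lies in this common column space; the $i$th column is ${\rm {\bf a}}_{.j}^{(k)}$, the $j$th column of ${\rm {\bf A}}^{k}$, which evidently lies in the column space of ${\rm {\bf A}}^{k}$ as well. Thus all $n$ columns of the altered matrix lie in a single subspace of dimension $\rank {\rm {\bf A}}^{k} = \rank {\rm {\bf A}}^{k+1}$.

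Finally, since the rank of any matrix cannot exceed the dimension of a subspace containing all of its columns, I would conclude that $\rank {\rm {\bf A}}_{.{\kern 1pt}i}^{k+1}\left({{\rm {\bf a}}_{.j}^{(k)}}\right) \le \rank {\rm {\bf A}}^{k+1}$, as claimed. I do not anticipate a serious obstacle: the whole argument rests on the single identity that the column space of ${\rm {\bf A}}^{k+1}$ coincides with that of ${\rm {\bf A}}^{k}$. The only point deserving a word of care is verifying that the \emph{replacement} column ${\rm {\bf a}}_{.j}^{(k)}$ belongs to this common space, but this is immediate because it is literally a column of ${\rm {\bf A}}^{k}$.
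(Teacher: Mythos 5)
Your argument is correct. All columns of ${\rm {\bf A}}_{.\,i}^{k+1}\left({{\rm {\bf a}}_{.j}^{(k)}}\right)$ indeed lie in the column space $R({\rm {\bf A}}^{k})$: the unreplaced ones because $R({\rm {\bf A}}^{k+1})\subseteq R({\rm {\bf A}}^{k})$, and the replaced one because it is literally a column of ${\rm {\bf A}}^{k}$; since $\dim R({\rm {\bf A}}^{k})=\rank{\rm {\bf A}}^{k}=\rank{\rm {\bf A}}^{k+1}$ by the definition of the index, the bound follows. (You do not actually need the equality of the two column spaces, only the inclusion and the equality of dimensions, but asserting it does no harm.)

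Your route is, however, genuinely different from the one the paper uses. The paper does not reprove this lemma (it cites it from an earlier work), but it proves the symmetric row version, Lemma \ref{lem:rank_A_row}, by a more computational method: it right-multiplies ${\rm {\bf A}}_{i.}^{k+1}\left({{\rm {\bf a}}_{j.}^{(k)}}\right)$ by elementary matrices ${\rm {\bf P}}_{l\,i}(-a_{lj})$ to clean up the rows, then exhibits the resulting matrix as a product ${\rm {\bf \tilde A}}\cdot{\rm {\bf A}}^{k}$ for an explicit auxiliary matrix ${\rm {\bf \tilde A}}$, and concludes via $\rank({\rm {\bf M}}{\rm {\bf N}})\le\min\{\rank{\rm {\bf M}},\rank{\rm {\bf N}}\}$ together with $\rank{\rm {\bf A}}^{k+1}=\rank{\rm {\bf A}}^{k}$. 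Your column-space argument is shorter and more conceptual, and it dualizes immediately (using row spaces) to give Lemma \ref{lem:rank_A_row} as well; what the paper's factorization buys is an explicit decomposition of the modified matrix, which fits the determinant-manipulation style of the rest of Section 2 but is not logically necessary for the rank inequality itself.
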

 Using Theorem \ref{theor:lim_draz} and Lemma
\ref{lem:rank_A_col}  we obtained in \cite{ky} the following determinantal
 representations of the Drazin and group inverses and the identity ${\bf A}^{D}{\bf A}$   on  $R  ({\bf A}^{k})$.
\begin{theorem}(\cite{ky}, Theorem 3.3)\label{theor:dr_repr_col}
 If $Ind{\kern 1pt} {\rm {\bf A}} = k $ and $\rank{\rm {\bf
A}}^{k + 1} = \rank{\rm {\bf A}}^{k}=r \le n$ for  ${\rm {\bf A}}\in {\mathbb C}^{n\times n} $, then the
Drazin inverse  ${\rm {\bf A}}^{D} = \left( {a_{ij}^{ D} } \right)
\in {\rm {\mathbb{C}}}_{}^{n\times n} $ possess the following
determinantal representations:
\begin{equation}
\label{eq:dr_repr_col}
  a_{ij}^{ D} = \frac{
  {\sum\limits_{\beta \in J_{r,n} {\left\{ {i} \right\}}}
{{\left| {\left( {{\rm {\bf A}}_{.\,i}^{k + 1} \left( {{\rm {\bf
a}}_{.j}^{\left( {k} \right)}}  \right)} \right)_{\beta} ^{\beta}
} \right|}}}  }{
  {\sum\limits_{\beta \in J_{r,n}} {{\left| {\left( {{\rm {\bf
A}}^{k + 1}} \right)_{\beta} ^{\beta }} \right|}}}},
 \end{equation}
for all $i,j = \overline {1,n}$.
\end{theorem}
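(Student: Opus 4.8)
The plan is to start from the limit representation of the Drazin inverse in Theorem~\ref{theor:lim_draz}, namely $\mathbf{A}^{D}=\lim_{\lambda\to 0}\bigl(\lambda\mathbf{I}_{n}+\mathbf{A}^{k+1}\bigr)^{-1}\mathbf{A}^{k}$, and to turn the limit into a ratio of two determinants whose $\lambda$-coefficients can be read off as principal minors. First I would write the inverse through its adjugate, $\bigl(\lambda\mathbf{I}_{n}+\mathbf{A}^{k+1}\bigr)^{-1}=\det\bigl(\lambda\mathbf{I}_{n}+\mathbf{A}^{k+1}\bigr)^{-1}\operatorname{adj}\bigl(\lambda\mathbf{I}_{n}+\mathbf{A}^{k+1}\bigr)$, and compute the $(i,j)$ entry of the product $\bigl(\lambda\mathbf{I}_{n}+\mathbf{A}^{k+1}\bigr)^{-1}\mathbf{A}^{k}$. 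Since an adjugate entry is a cofactor, the resulting numerator $\sum_{l}\bigl[\operatorname{adj}(\lambda\mathbf{I}_{n}+\mathbf{A}^{k+1})\bigr]_{il}\,a^{(k)}_{lj}$ is exactly the cofactor expansion along the $i$th column of the matrix obtained from $\lambda\mathbf{I}_{n}+\mathbf{A}^{k+1}$ by replacing that column with $\mathbf{a}^{(k)}_{.j}$. Hence the $(i,j)$ entry equals $N_{ij}(\lambda)/D(\lambda)$, where $D(\lambda)=\det(\lambda\mathbf{I}_{n}+\mathbf{A}^{k+1})$ and $N_{ij}(\lambda)=\det\bigl((\lambda\mathbf{I}_{n}+\mathbf{A}^{k+1})_{.i}(\mathbf{a}^{(k)}_{.j})\bigr)$.

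The next step is to expand both polynomials in $\lambda$ and locate their lowest-order terms. For the denominator the standard characteristic-polynomial expansion gives $D(\lambda)=\sum_{s=0}^{n}\lambda^{n-s}\sum_{\beta\in L_{s,n}}\bigl|(\mathbf{A}^{k+1})^{\beta}_{\beta}\bigr|$, so the coefficient of $\lambda^{n-s}$ is the sum of all principal minors of $\mathbf{A}^{k+1}$ of order $s$. Because $\rank\mathbf{A}^{k+1}=r$, every minor of order $s>r$ vanishes, so the lowest surviving power is $\lambda^{n-r}$, whose coefficient $\sum_{\beta\in L_{r,n}}\bigl|(\mathbf{A}^{k+1})^{\beta}_{\beta}\bigr|$ is precisely the denominator $\sum_{\beta\in J_{r,n}}\bigl|(\mathbf{A}^{k+1})^{\beta}_{\beta}\bigr|$ of the asserted formula.

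For the numerator I would expand $N_{ij}(\lambda)$ by multilinearity in its columns. The $i$th column carries no $\lambda$, while each remaining column $p$ is $\lambda\mathbf{e}_{p}+\mathbf{a}^{(k+1)}_{.p}$; selecting $\lambda\mathbf{e}_{p}$ from the columns in a set $S\subseteq\{1,\dots,n\}\setminus\{i\}$ produces $N_{ij}(\lambda)=\sum_{S}\lambda^{|S|}\det(\mathbf{M}_{S})$, where $\mathbf{M}_{S}$ has the unit columns $\mathbf{e}_{p}$ for $p\in S$ and otherwise coincides with $\mathbf{A}_{.i}^{k+1}(\mathbf{a}^{(k)}_{.j})$. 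Expanding $\det(\mathbf{M}_{S})$ along each unit column collapses it, with sign $+1$ since each $\mathbf{e}_{p}$ sits in its diagonal position, to the principal minor of $\mathbf{A}_{.i}^{k+1}(\mathbf{a}^{(k)}_{.j})$ indexed by $\alpha:=\{1,\dots,n\}\setminus S$, where $i\in\alpha$ and $|\alpha|=n-|S|$. Thus the coefficient of $\lambda^{n-r}$, i.e. the case $|\alpha|=r$, is $\sum_{\alpha\in L_{r,n},\,i\in\alpha}\bigl|(\mathbf{A}_{.i}^{k+1}(\mathbf{a}^{(k)}_{.j}))^{\alpha}_{\alpha}\bigr|=\sum_{\beta\in J_{r,n}\{i\}}\bigl|(\mathbf{A}_{.i}^{k+1}(\mathbf{a}^{(k)}_{.j}))^{\beta}_{\beta}\bigr|$, matching the numerator of the claim.

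The crux, and the step I expect to be the main obstacle, is showing that $N_{ij}(\lambda)$ contains no power of $\lambda$ lower than $n-r$, so that the two factors $\lambda^{n-r}$ cancel and the limit is finite. A power $\lambda^{s}$ with $s<n-r$ corresponds to $|\alpha|=n-s>r$, that is, to a principal minor of $\mathbf{A}_{.i}^{k+1}(\mathbf{a}^{(k)}_{.j})$ of order exceeding $r$. Here Lemma~\ref{lem:rank_A_col} is decisive: it guarantees $\rank\mathbf{A}_{.i}^{k+1}(\mathbf{a}^{(k)}_{.j})\le\rank\mathbf{A}^{k+1}=r$, so all minors of order $>r$ vanish and every such coefficient is zero. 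Consequently $N_{ij}(\lambda)=\lambda^{n-r}\bigl(\sum_{\beta\in J_{r,n}\{i\}}|(\mathbf{A}_{.i}^{k+1}(\mathbf{a}^{(k)}_{.j}))^{\beta}_{\beta}|+O(\lambda)\bigr)$ while $D(\lambda)=\lambda^{n-r}\bigl(\sum_{\beta\in J_{r,n}}|(\mathbf{A}^{k+1})^{\beta}_{\beta}|+O(\lambda)\bigr)$, and letting $\lambda\to 0$ yields the stated determinantal representation for $a^{D}_{ij}$.
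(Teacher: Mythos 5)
Your proposal is correct and follows essentially the same route the paper relies on: the limit representation of ${\rm {\bf A}}^{D}$ combined with Lemma~\ref{lem:rank_A_col}, expanding $\det\left( {\lambda {\rm {\bf I}}_n + {\rm {\bf A}}^{k+1}} \right)$ and the column-replaced determinant as polynomials in $\lambda$ whose coefficients are sums of principal minors, killing all coefficients of order exceeding $r$ by the rank bound, and taking the ratio of the surviving $\lambda^{n-r}$ coefficients. This is exactly the argument the paper carries out in full for the transposed statement, Theorem~\ref{theor:dr_repr_row}, so no further comparison is needed.
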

\begin{corollary}(\cite{ky}, Corollary 3.1)
 If $Ind{\kern 1pt} {\rm {\bf A}} = 1 $ and $\rank{\rm {\bf
A}}^{2} = \rank{\rm {\bf A}}=r \le n$ for 
${\rm {\bf A}}\in {\mathbb C}^{n\times n} $, then the
group inverse  ${\rm {\bf A}}^{g} = \left( {a_{ij}^{ g} } \right)
\in {\rm {\mathbb{C}}}_{}^{n\times n} $ possess the following
determinantal representation:
\[
 a_{ij}^{g} ={\frac{\sum\limits_{\beta \in J_{r,n} {\left\{ {i} \right\}}} {{\left|
{\left( {{\rm {\bf A}}_{.\,i}^{2} \left( {{\rm {\bf a}}_{.j}}
\right)} \right)_{\beta} ^{\beta} } \right|}}}{{\sum\limits_{\beta \in J_{r,n}} {{\left| {\left( {{\rm {\bf
A}}^{2}} \right)_{\beta} ^{\beta }}  \right|}}}}},
\]
 for all $i,j = \overline {1,n}. $
\end{corollary}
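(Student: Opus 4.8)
The plan is to obtain this corollary as a direct specialization of Theorem~\ref{theor:dr_repr_col} to the index-one case $k = 1$. First I would recall that when $Ind\,{\rm {\bf A}} = 1$ the three defining properties in (\ref{eq:Dr_prop}) are exactly the properties characterizing the group inverse, so that ${\rm {\bf A}}^{g} = {\rm {\bf A}}^{D}$; consequently it suffices to evaluate the Drazin-inverse formula (\ref{eq:dr_repr_col}) at $k = 1$. The hypotheses line up without adjustment: the assumption $\rank{\rm {\bf A}}^{2} = \rank{\rm {\bf A}} = r \le n$ of the corollary is precisely the condition $\rank{\rm {\bf A}}^{k+1} = \rank{\rm {\bf A}}^{k} = r$ of Theorem~\ref{theor:dr_repr_col} with $k = 1$, so that theorem is applicable and its conclusion holds verbatim after setting $k=1$.

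The remaining step is the substitution $k = 1$ in (\ref{eq:dr_repr_col}). In the denominator, ${\rm {\bf A}}^{k+1}$ becomes ${\rm {\bf A}}^{2}$, giving the sum of principal minors $\sum_{\beta \in J_{r,n}} {\left| {\left( {\rm {\bf A}}^{2} \right)_{\beta}^{\beta}} \right|}$. In the numerator, ${\rm {\bf A}}^{k+1}$ again becomes ${\rm {\bf A}}^{2}$, while the replacing column ${\rm {\bf a}}_{.j}^{(k)}$, which is the $j$th column of ${\rm {\bf A}}^{k}$, becomes ${\rm {\bf a}}_{.j}^{(1)} = {\rm {\bf a}}_{.j}$, the $j$th column of ${\rm {\bf A}}$ itself, since ${\rm {\bf A}}^{1} = {\rm {\bf A}}$. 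The index collections $J_{r,n}{\left\{ {i} \right\}}$ and $J_{r,n}$ are independent of $k$ and are therefore unchanged. Assembling these substitutions reproduces the displayed formula for $a_{ij}^{g}$ exactly, for all $i,j = \overline{1,n}$.

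Because the argument is a pure specialization of an already-established theorem, there is no substantive obstacle here. The only point deserving (minimal) care is the notational identification ${\rm {\bf a}}_{.j}^{(1)} = {\rm {\bf a}}_{.j}$ together with the verification that the single rank equality in the corollary's hypothesis is indeed the $k=1$ instance of the two-fold rank condition in Theorem~\ref{theor:dr_repr_col}; both are immediate. Thus I would present the proof as a one-line reduction rather than an independent derivation.
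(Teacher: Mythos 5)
Your proposal is correct and coincides with the paper's intended argument: the corollary is stated without proof precisely because it is the $k=1$ specialization of Theorem~\ref{theor:dr_repr_col}, using ${\rm {\bf A}}^{g}={\rm {\bf A}}^{D}$ when $Ind\,{\rm {\bf A}}=1$ and ${\rm {\bf a}}_{.j}^{(1)}={\rm {\bf a}}_{.j}$. Nothing further is needed.
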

\begin{corollary}(\cite{ky}, Corollary 3.2)
If $Ind{\kern 1pt} {\rm {\bf A}} = k $ and $\rank{\rm {\bf A}}^{k
+ 1} = \rank{\rm {\bf A}}^{k}=r \le n$ for
${\rm {\bf A}}\in {\mathbb C}^{n\times n} $, then the matrix ${\bf A}^{D}{\bf A}= (p_{ij})\in {\mathbb C}^{n\times n}$ possess the following determinantal representation
\begin{equation}\label{eq:AdA}
 p_{ij} =\frac{{\sum\limits_{\beta \in J_{r,n} {\left\{ {i} \right\}}} {{\left|
{\left( {{\rm {\bf A}}_{.\,i}^{k+1} \left( {{\rm {\bf
a}}_{.j}}^{(k+1)} \right)} \right)_{\beta} ^{\beta} } \right|}}}}{{\sum\limits_{\beta \in J_{r,n} } {{\left|
{\left( {{\rm {\bf A}}_{.\,i}^{k+1} } \right)_{\beta} ^{\beta} } \right|}}}},
\end{equation}
for all $i,j = \overline {1,n}. $
\end{corollary}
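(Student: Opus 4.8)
The plan is to imitate the derivation of Theorem~\ref{theor:dr_repr_col}, replacing $\mathbf{A}^{k}$ by $\mathbf{A}^{k+1}$ throughout. First I would record a limit representation for the product: starting from Theorem~\ref{theor:lim_draz} and multiplying on the right by $\mathbf{A}$, the identity $\mathbf{A}^{k}\mathbf{A}=\mathbf{A}^{k+1}$ gives
$$\mathbf{A}^{D}\mathbf{A}=\lim_{\lambda\to0}\bigl(\lambda\mathbf{I}_{n}+\mathbf{A}^{k+1}\bigr)^{-1}\mathbf{A}^{k+1}.$$
Writing $\mathbf{B}:=\mathbf{A}^{k+1}$ and expanding the inverse through the adjugate identity $(\lambda\mathbf{I}_{n}+\mathbf{B})^{-1}=\det(\lambda\mathbf{I}_{n}+\mathbf{B})^{-1}\operatorname{adj}(\lambda\mathbf{I}_{n}+\mathbf{B})$, the $(i,j)$ entry of the product becomes a single ratio of determinants,
$$\bigl[(\lambda\mathbf{I}_{n}+\mathbf{B})^{-1}\mathbf{B}\bigr]_{ij}=\frac{\det\bigl((\lambda\mathbf{I}_{n}+\mathbf{B})_{.\,i}(\mathbf{a}_{.j}^{(k+1)})\bigr)}{\det(\lambda\mathbf{I}_{n}+\mathbf{B})},$$
because the cofactor sum $\sum_{l}\operatorname{cof}_{li}(\lambda\mathbf{I}_{n}+\mathbf{B})\,b_{lj}$ is precisely the determinant of $\lambda\mathbf{I}_{n}+\mathbf{B}$ with its $i$th column overwritten by the $j$th column $\mathbf{a}_{.j}^{(k+1)}$ of $\mathbf{B}$.

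Next I would expand both determinants as polynomials in $\lambda$ and pass to the limit. The denominator is the characteristic-type polynomial $\det(\lambda\mathbf{I}_{n}+\mathbf{B})=\sum_{s=0}^{n}\lambda^{n-s}\sum_{\beta\in L_{s,n}}|\mathbf{B}_{\beta}^{\beta}|$. Because $\mathbf{A}$ has index $k$, the Jordan decomposition of $\mathbf{A}$ shows $\mathbf{B}=\mathbf{A}^{k+1}$ has exactly $n-r$ zero eigenvalues and $r$ nonzero ones, so all principal minors of order exceeding $r$ vanish, while $\sum_{\beta\in L_{r,n}}|\mathbf{B}_{\beta}^{\beta}|=\sum_{\beta\in J_{r,n}}|(\mathbf{A}^{k+1})_{\beta}^{\beta}|\neq0$; hence the denominator's lowest-order term is $\lambda^{n-r}$ with this nonzero coefficient. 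For the numerator I would use multilinearity of the determinant in its columns: every column $l\neq i$ equals $\mathbf{b}_{.l}+\lambda\mathbf{e}_{l}$ while the $i$th column carries no $\lambda$, so the determinant splits over subsets $S\subseteq\{1,\dots,n\}\setminus\{i\}$ of columns that contribute the $\lambda\mathbf{e}_{l}$ term. The key point is a rank bound analogous to Lemma~\ref{lem:rank_A_col} — here even more immediate, since overwriting one column of $\mathbf{A}^{k+1}$ by another of its columns keeps all columns in the column space of $\mathbf{A}^{k+1}$, so the rank stays $\le r$. This forces every term with $|S|<n-r$ to vanish, as such a term is a principal minor of order greater than $r$ of a matrix of rank at most $r$. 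Thus the numerator also has lowest order $\lambda^{n-r}$, and a Laplace expansion along the basis columns $\mathbf{e}_{l}$, $l\in S$, collapses the coefficient of $\lambda^{n-r}$ into $\sum_{\alpha\in J_{r,n}\{i\}}|(\mathbf{A}_{.\,i}^{k+1}(\mathbf{a}_{.j}^{(k+1)}))_{\alpha}^{\alpha}|$.

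Dividing numerator and denominator by $\lambda^{n-r}$ and letting $\lambda\to0$ annihilates every higher-order contribution and leaves exactly the claimed quotient, proving the formula for $p_{ij}$. I expect the main obstacle to be the bookkeeping in the multilinear expansion of the numerator: one must verify that the Laplace-expansion signs produced by extracting the standard basis columns $\mathbf{e}_{l}$ combine to $+1$, so that the surviving terms are genuinely the principal minors $|(\mathbf{A}_{.\,i}^{k+1}(\mathbf{a}_{.j}^{(k+1)}))_{\alpha}^{\alpha}|$ indexed by the sets $\alpha\in L_{r,n}$ containing $i$, with no spurious sign or index mismatch. Since this is the same mechanism that drives the proof of Theorem~\ref{theor:dr_repr_col}, I would organize the argument so as to reuse that computation essentially verbatim, with $k+1$ in the role of $k$ and $\mathbf{a}_{.j}^{(k+1)}$ in the role of $\mathbf{a}_{.j}^{(k)}$.
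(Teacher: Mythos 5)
Your proof is correct, but it takes a longer, self-contained route than the one the paper intends. The corollary is meant to be an immediate consequence of Theorem~\ref{theor:dr_repr_col}: writing $p_{ij}=\sum_{s}a_{is}^{D}a_{sj}$, substituting the representation (\ref{eq:dr_repr_col}) for $a_{is}^{D}$, and using linearity of each determinant in its replaced $i$th column, the sum $\sum_{s}\left|\left({\rm {\bf A}}_{.\,i}^{k+1}({\rm {\bf a}}_{.s}^{(k)})\right)_{\beta}^{\beta}\right|a_{sj}$ collapses to $\left|\left({\rm {\bf A}}_{.\,i}^{k+1}(\sum_{s}{\rm {\bf a}}_{.s}^{(k)}a_{sj})\right)_{\beta}^{\beta}\right|=\left|\left({\rm {\bf A}}_{.\,i}^{k+1}({\rm {\bf a}}_{.j}^{(k+1)})\right)_{\beta}^{\beta}\right|$, which is exactly the special case ${\rm {\bf B}}={\rm {\bf A}}$ of the argument later used to prove (\ref{eq:dr_AX}). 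You instead restart from the limit representation ${\rm {\bf A}}^{D}{\rm {\bf A}}=\lim_{\lambda\to0}(\lambda{\rm {\bf I}}_{n}+{\rm {\bf A}}^{k+1})^{-1}{\rm {\bf A}}^{k+1}$ and redo the whole polynomial-in-$\lambda$ expansion; every step you describe is sound, and your observation that the rank bound here is immediate (replacing a column of ${\rm {\bf A}}^{k+1}$ by another of its columns cannot increase the rank) is a genuine simplification over Lemma~\ref{lem:rank_A_col}, while your identification of $d_{r}\neq0$ via the nonzero eigenvalues of the core block is a detail the paper leaves implicit. What your approach buys is independence from Theorem~\ref{theor:dr_repr_col} and an explicit verification of the nonvanishing denominator; what it costs is repeating the sign and multilinearity bookkeeping that the two-line derivation from the already-proved theorem avoids entirely. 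Also note that the denominator in the printed statement carries a spurious subscript ``$_{.\,i}$'' on ${\rm {\bf A}}^{k+1}$; your derivation confirms it should read $\sum_{\beta\in J_{r,n}}\left|\left({\rm {\bf A}}^{k+1}\right)_{\beta}^{\beta}\right|$.
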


Using Theorem \ref{theor:lim_A_k_A_k+1} we can obtain another
determinantal representation of the Drazin inverse. At first we
consider the following auxiliary lemma similar to Lemma  \ref{lem:rank_A_col}.
\begin{lemma} \label{lem:rank_A_row} If ${\rm {\bf A}} \in {\mathbb C}^{n\times n}$
with $Ind{\kern 1pt} {\rm {\bf A}} = k $, then for all $i,j =
\overline {1,n}$
\[ \rank{\rm {\bf A}}_{i{\kern 1pt} .}^{k + 1}
\left( {{\rm {\bf a}}_{j.}^{\left( {k} \right)}}  \right) \le
\rank{\rm {\bf A}}^{k + 1}.
\]
\end{lemma}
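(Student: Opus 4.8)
The plan is to reduce this row statement to the column statement of Lemma~\ref{lem:rank_A_col} by transposition. Write $\mathbf{B} := \mathbf{A}^{T}$. First I would check that $\mathbf{B}$ inherits the hypotheses of that lemma: since $\rank \mathbf{M} = \rank \mathbf{M}^{T}$ for every matrix and $(\mathbf{A}^{m})^{T} = \mathbf{B}^{m}$ for every $m$, we get $\rank \mathbf{B}^{k+1} = \rank \mathbf{A}^{k+1} = \rank \mathbf{A}^{k} = \rank \mathbf{B}^{k}$, so $Ind\,\mathbf{B} = k = Ind\,\mathbf{A}$.

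The key step is to recognize the transpose of the matrix under consideration. Transposing $\mathbf{A}_{i.}^{k+1}(\mathbf{a}_{j.}^{(k)})$ converts the replacement of the $i$-th row into the replacement of the $i$-th column and sends each factor to its transpose. Since the $j$-th row of $\mathbf{A}^{k}$ transposes to the $j$-th column $\mathbf{b}_{.j}^{(k)}$ of $\mathbf{B}^{k}$, and $(\mathbf{A}^{k+1})^{T} = \mathbf{B}^{k+1}$, one obtains
\[
\left(\mathbf{A}_{i.}^{k+1}\!\left(\mathbf{a}_{j.}^{(k)}\right)\right)^{T} = \mathbf{B}_{.i}^{k+1}\!\left(\mathbf{b}_{.j}^{(k)}\right),
\]
which is precisely the column-replacement matrix for $\mathbf{B}$ appearing in Lemma~\ref{lem:rank_A_col}.

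Applying Lemma~\ref{lem:rank_A_col} to $\mathbf{B}$ yields $\rank \mathbf{B}_{.i}^{k+1}(\mathbf{b}_{.j}^{(k)}) \le \rank \mathbf{B}^{k+1}$. Because rank is unchanged by transposition, the left-hand side equals $\rank \mathbf{A}_{i.}^{k+1}(\mathbf{a}_{j.}^{(k)})$ and the right-hand side equals $\rank \mathbf{A}^{k+1}$, which is the desired inequality.

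The argument has no genuine difficulty; the only point requiring care is the bookkeeping in the middle step, namely verifying that transposition really interchanges row- and column-replacement and that the index is preserved under transpose. Should a self-contained proof be preferred, one can instead argue directly with row spaces, exactly mirroring Lemma~\ref{lem:rank_A_col}: from $\mathbf{A}^{k+1} = \mathbf{A}\,\mathbf{A}^{k}$ every row of $\mathbf{A}^{k+1}$ is a combination of rows of $\mathbf{A}^{k}$, so the row space of $\mathbf{A}^{k+1}$ is contained in that of $\mathbf{A}^{k}$, and equality of ranks forces them to coincide; hence $\mathbf{a}_{j.}^{(k)}$ lies in the row space of $\mathbf{A}^{k+1}$, and replacing a row of $\mathbf{A}^{k+1}$ by it keeps every row inside that row space, so the rank cannot increase. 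I would present the transpose argument, as it is shortest and reuses Lemma~\ref{lem:rank_A_col} without modification.
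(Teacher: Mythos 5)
Your proof is correct, but it takes a genuinely different route from the paper. The paper proves the row lemma from scratch by mirroring the technique behind Lemma \ref{lem:rank_A_col}: it applies elementary row operations to ${\rm\bf A}^{k+1}_{i.}\bigl({\rm\bf a}^{(k)}_{j.}\bigr)$ (subtracting $a_{lj}$ times the replaced row from each other row) and then exhibits the resulting matrix explicitly as a product $\tilde{{\rm\bf A}}\,{\rm\bf A}^{k}$, where $\tilde{{\rm\bf A}}$ is ${\rm\bf A}$ with the $i$th row and $j$th column zeroed out except for a $1$ in position $(i,j)$; the bound then follows from $\rank(\tilde{{\rm\bf A}}{\rm\bf A}^{k})\le\rank{\rm\bf A}^{k}=\rank{\rm\bf A}^{k+1}$. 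You instead reduce the row statement to the column statement by transposition, which is legitimate: your verification that $Ind\,{\rm\bf A}^{T}=Ind\,{\rm\bf A}$ and the identity $\bigl({\rm\bf A}^{k+1}_{i.}({\rm\bf a}^{(k)}_{j.})\bigr)^{T}={\rm\bf B}^{k+1}_{.\,i}\bigl({\rm\bf b}^{(k)}_{.\,j}\bigr)$ with ${\rm\bf B}={\rm\bf A}^{T}$ are both sound, and rank is transpose-invariant. Your argument is shorter and uses Lemma \ref{lem:rank_A_col} as a black box, which is all that is needed for the rank inequality itself; the paper's explicit factorization is more laborious but self-contained and displays the structure of the modified matrix, in keeping with the computational style of the cited source. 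Your fallback row-space argument (row space of ${\rm\bf A}^{k+1}$ equals that of ${\rm\bf A}^{k}$ since one contains the other and the ranks agree, so the replacement row already lies in it) is also correct and is arguably the conceptual core common to both proofs.
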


\newproof{proof}{Proof}
\begin{proof}  The matrix  $ {\rm {\bf A}}^{ k+1} _{i\,.} \left({{\rm {\bf a}}_{j\,.}^{ (k)} }  \right)$ may by represent as follows
\[\left(
    \begin{array}{ccc}
      {{\sum\limits_{s=1}^{n}a_{1s} {a_{s1}^{ (k)}  } } } & \ldots & {{\sum\limits_{s=1}^{n}a_{1s} {a_{sn}^{ (k)}  } } } \\
      \ldots & \ldots & \ldots \\
      a_{j1}^{ (k)}  & \ldots & a_{j\,n}^{ (k)}  \\
      \ldots & \ldots & \ldots \\
      {{\sum\limits_{s=1}^{n}a_{ns} {a_{s1}^{ (k)}  } } } & \ldots & {{\sum\limits_{s=1}^{n}a_{n\,s} {a_{sn}^{ (k)}  } } } \\
    \end{array}
  \right)
\]
Let ${\rm {\bf P}}_{l\,i} \left( {-a_{l\,j}}  \right)\in {\mathbb
C}^{n\times n} $, $(l \ne i )$, be a matrix with $-a_{l\,j} $ in
the $(l, i)$ entry, 1 in all diagonal entries, and 0 in others.
It is a matrix of an
elementary transformation. It follows that
\[
{\rm {\bf A}}^{ k+1} _{i\,.} \left({{\rm {\bf a}}_{j\,.}^{ (k)} }  \right) \cdot \prod\limits_{l \ne
i}{\rm {\bf P}}_{l\,i} \left( {-a_{l\,j}}  \right) =\left({
    \begin{array}{ccc}
      {{\sum\limits_{s\neq j}^{n} a_{1s} {a_{s1}^{ (k)}  } } } & \ldots & {{\sum\limits_{s\neq j}^{n}a_{1s} {a_{sn}^{ (k)}  } } } \\
      \ldots & \ldots & \ldots \\
      a_{j1}^{ (k)}  & \ldots & a_{j\,n}^{ (k)}  \\
      \ldots & \ldots & \ldots \\
      {{\sum\limits_{s\neq j}^{n} a_{ns} {a_{s1}^{ (k)}  } } } & \ldots & {{\sum\limits_{s\neq j}^{n}a_{ns} {a_{sn}^{ (k)}  } } } \\
    \end{array}}
  \right)\, ith
\]
 The obtained above matrix  has the following factorization.
\[
 \begin{pmatrix}
      {{\sum\limits_{s\neq j}^{n} a_{1s} {a_{s1}^{ (k)}  } } } & \ldots & {{\sum\limits_{s\neq j}^{n}a_{1s} {a_{sn}^{ (k)}  } } } \\
      \ldots & \ldots & \ldots \\
      a_{j1}^{ (k)}  & \ldots & a_{j\,n}^{ (k)}  \\
      \ldots & \ldots & \ldots \\
      {{\sum\limits_{s\neq j}^{n} a_{ns} {a_{s1}^{ (k)}  } } } & \ldots & {{\sum\limits_{s\neq j}^{n}a_{ns} {a_{sn}^{ (k)}  } } } \\
 \end{pmatrix} =
\]
\[ \begin{pmatrix}
      a_{11} &  \ldots & 0 &  \ldots & a_{1n} \\
       \ldots &  \ldots &  \ldots &  \ldots &  \ldots \\
      0 &  \ldots & 1 &  \ldots & 0 \\
       \ldots &  \ldots &  \ldots &  \ldots &  \ldots \\
      a_{n1} &  \ldots & 0 &  \ldots & a_{nn} \\
  \end{pmatrix}
    \begin{pmatrix}
      a_{11}^{(k)} &  a_{12}^{(k)}  & \ldots &  a_{1n}^{(k)}  \\
      a_{21}^{(k)}  &  a_{22}^{(k)}  & \ldots &  a_{2n}^{(k)}  \\
      \ldots & \ldots & \ldots & \ldots \\
       a_{n1}^{(k)}  &  a_{n2}^{(k)}  & \ldots &  a_{nn}^{(k)}  \\
    \end{pmatrix}
\]
 Denote the first matrix by
  \[{\rm {\bf \tilde {A}}}: = {\mathop
{\left( {{\begin{array}{*{20}c}
 {a_{11}}  \hfill & {\ldots}  \hfill & {0} \hfill & {\ldots}  \hfill &
{a_{1n}}  \hfill \\
 {\ldots}  \hfill & {\ldots}  \hfill & {\ldots}  \hfill & {\ldots}  \hfill &
{\ldots}  \hfill \\
 {0} \hfill & {\ldots}  \hfill & {1} \hfill & {\ldots}  \hfill & {0} \hfill
\\
 {\ldots}  \hfill & {\ldots}  \hfill & {\ldots}  \hfill & {\ldots}  \hfill &
{\ldots}  \hfill \\
 {a_{n1}}  \hfill & {\ldots}  \hfill & {0} \hfill & {\ldots}  \hfill &
{a_{nn}}  \hfill \\
\end{array}} } \right)}\limits_{jth}} ith.\]
The matrix
${\rm {\bf \tilde {A}}}$ is obtained from ${\rm {\bf A}}$ by
replacing all entries of the $i$th row  and the $j$th column with
zeroes except for 1 in the $(i, j)$ entry. Elementary
transformations of a matrix do not change its rank. It follows that
$\rank{\rm {\bf A}}^{ k+1} _{i\,.} \left({{\rm {\bf a}}_{j\,.}^{ (k)} }  \right) \le \min {\left\{
{\rank{\rm {\bf A}}^{ k },\rank{\rm {\bf \tilde {A}}}} \right\}}$.
Since $\rank{\rm {\bf \tilde {A}}} \ge \rank{\rm {\bf A}}^{ k} $ the proof is completed.
\end{proof}
\begin{theorem}\label{theor:dr_repr_row}
 If $Ind{\kern 1pt} {\rm {\bf A}} = k $ and $\rank{\rm {\bf
A}}^{k + 1} = \rank{\rm {\bf A}}^{k}=r \le n$ for  ${\rm {\bf A}}\in {\mathbb C}^{n\times n} $, then the
Drazin inverse  ${\rm {\bf A}}^{D} = \left( {a_{ij}^{ D} } \right)
\in {\rm {\mathbb{C}}}_{}^{n\times n} $ possess the following
determinantal representations:
\begin{equation}
\label{eq:dr_repr_row}
  a_{ij}^{ D} = \frac{
  {\sum\limits_{\alpha \in I_{r,n} {\left\{ {j} \right\}}}
{{\left| {\left( {{\rm {\bf A}}_{j.}^{k + 1} \left( {{\rm {\bf
a}}_{i.}^{\left( {k} \right)}}  \right)} \right)_{\alpha}
^{\alpha} } \right|}}}  }
{  {\sum\limits_{\alpha \in I_{r,n}} {{\left| {\left( {{\rm {\bf
A}}^{k + 1}} \right)_{\alpha} ^{\alpha }} \right|}}}},
 \end{equation}
for all $i,j = \overline {1,n}$.
\end{theorem}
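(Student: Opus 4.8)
The plan is to mirror the argument that established Theorem~\ref{theor:dr_repr_col} in \cite{ky}, but to start instead from the right-handed limit formula of Theorem~\ref{theor:lim_A_k_A_k+1} and to exploit the row-rank bound of Lemma~\ref{lem:rank_A_row} in place of its column analogue. First I would fix $\lambda>0$, write $\mathbf{L}:=\lambda\mathbf{I}_n+\mathbf{A}^{k+1}$, and express the $(i,j)$ entry of $\mathbf{A}^k\mathbf{L}^{-1}$ through the adjugate as $(\mathbf{A}^k\mathbf{L}^{-1})_{ij}=(\det\mathbf{L})^{-1}\sum_{s=1}^{n}a_{is}^{(k)}\,\mathrm{cof}_{js}(\mathbf{L})$. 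The numerator is exactly a cofactor expansion along the $j$th row, so it equals $\det\mathbf{L}_{j.}(\mathbf{a}_{i.}^{(k)})$, the determinant of $\mathbf{L}$ with its $j$th row replaced by the $i$th row of $\mathbf{A}^k$. Thus for every $\lambda>0$ the entry is the ratio $\det\mathbf{L}_{j.}(\mathbf{a}_{i.}^{(k)})/\det\mathbf{L}$.

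Next I would expand both determinants as polynomials in $\lambda$ by writing each row $l\neq j$ of $\mathbf{L}$ as $\lambda\mathbf{e}_l+\mathbf{a}_{l.}^{(k+1)}$ and invoking multilinearity of the determinant in its rows. For the denominator this reproduces the familiar identity $\det(\lambda\mathbf{I}_n+\mathbf{A}^{k+1})=\sum_{s=0}^{n}\lambda^{n-s}\sum_{\alpha\in L_{s,n}}|(\mathbf{A}^{k+1})_{\alpha}^{\alpha}|$. For the numerator the same splitting shows that the coefficient of $\lambda^{n-s}$ is the sum, over index sets $\alpha$ of size $s$ that contain $j$, of the principal minors $|(\mathbf{A}_{j.}^{k+1}(\mathbf{a}_{i.}^{(k)}))_{\alpha}^{\alpha}|$; the restriction $j\in\alpha$ arises precisely because the replaced $j$th row (carrying no $\lambda$) can never be one of the chosen $\mathbf{e}_l$ rows, so it must survive into the retained block. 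At this point I would invoke Lemma~\ref{lem:rank_A_row}: since $\rank\mathbf{A}_{j.}^{k+1}(\mathbf{a}_{i.}^{(k)})\le r$, every principal minor of order $s>r$ vanishes, so the numerator is divisible by $\lambda^{n-r}$ with leading coefficient $\sum_{\alpha\in I_{r,n}\{j\}}|(\mathbf{A}_{j.}^{k+1}(\mathbf{a}_{i.}^{(k)}))_{\alpha}^{\alpha}|$. The hypothesis $\rank\mathbf{A}^{k+1}=r$ truncates the denominator identically at $s=r$.

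Finally I would cancel the common factor $\lambda^{n-r}$ and let $\lambda\to0$, leaving exactly the ratio of the two order-$r$ coefficients, which is the asserted formula \eqref{eq:dr_repr_row}. The one point requiring care here is that the denominator's leading coefficient $\sum_{\alpha\in I_{r,n}}|(\mathbf{A}^{k+1})_{\alpha}^{\alpha}|$ is nonzero, so that the limit is well defined; I would justify this from the Jordan canonical form recalled at the start of this section, which gives $\mathbf{A}^{k+1}=\mathbf{P}\,\mathrm{diag}(\mathbf{C}^{k+1},\mathbf{0})\,\mathbf{P}^{-1}$ with $\mathbf{C}$ nonsingular of order $r$, whence this elementary-symmetric sum of the eigenvalues of $\mathbf{A}^{k+1}$ reduces to $\det\mathbf{C}^{k+1}\neq0$.

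The main technical obstacle is the bookkeeping in the multilinear expansion of the numerator: one must verify that selecting the $\mathbf{e}_l$-rows contributes, with the correct sign, a clean principal minor on the complementary index set, and that the index sets surviving the rank truncation are exactly those of size $r$ containing $j$. Once these identifications are in place, the remainder is only the elementary computation of the limit of a ratio of two polynomials whose lowest-order terms in $\lambda$ have already been pinned down, so the conclusion follows immediately.
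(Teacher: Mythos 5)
Your proposal follows essentially the same route as the paper's own proof: both start from Theorem~\ref{theor:lim_A_k_A_k+1}, express the $(i,j)$ entry of ${\rm\bf A}^{k}(\lambda{\rm\bf I}_n+{\rm\bf A}^{k+1})^{-1}$ as the ratio $\det(\lambda{\rm\bf I}+{\rm\bf A}^{k+1})_{j.}({\rm\bf a}_{i.}^{(k)})/\det(\lambda{\rm\bf I}+{\rm\bf A}^{k+1})$ via the cofactor expansion, expand numerator and denominator in powers of $\lambda$ with coefficients given by sums of principal minors, truncate both at order $r$ using Lemma~\ref{lem:rank_A_row} and the hypothesis $\rank{\rm\bf A}^{k+1}=r$, and pass to the limit $\lambda\to 0$. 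Your additional verification that $\sum_{\alpha\in I_{r,n}}|({\rm\bf A}^{k+1})_{\alpha}^{\alpha}|\neq 0$ via the Jordan form is a sensible supplement that the paper leaves implicit, but it does not change the argument.
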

\begin{proof}
If $\lambda\in {\mathbb{R}}_{+}$, then  $\rank \left( {\lambda {\rm {\bf I}} + {\rm {\bf A}}^{ k+1} } \right)=n$.
 Hence,   there exists the  inverse matrix
\[
\left( {\lambda {\rm {\bf I}} + {\rm {\bf A}}^{ k+1} }
\right)^{ - 1} = {\frac{{1}}{{\det \left( {\lambda {\rm {\bf I}} +
{\rm {\bf A}}^{ k+1 }} \right)}}}\left(
{{\begin{array}{*{20}c}
 {R_{11}}  \hfill & {R_{21}}  \hfill & {\ldots}  \hfill & {R_{n\,1}}  \hfill
\\
 {R_{12}}  \hfill & {R_{22}}  \hfill & {\ldots}  \hfill & {R_{n\,2}}  \hfill
\\
 {\ldots}  \hfill & {\ldots}  \hfill & {\ldots}  \hfill & {\ldots}  \hfill
\\
 {R_{1\,n}}  \hfill & {R_{2\,n}}  \hfill & {\ldots}  \hfill & {R_{n\,n}}  \hfill
\\
\end{array}} } \right),
\]
 where  $R_{ij} $  is a cofactor
  in $\lambda {\rm {\bf
I}} + {\rm {\bf A}}^{ k+1}$ for all $ i,j=\overline{1,n}$.   By Theorem \ref{theor:lim_A_k_A_k+1},
${\rm {\bf A}}^{D} = {\mathop {\lim} \limits_{\lambda \to 0}}{\rm
{\bf A}}^{k} \left( {\lambda {\rm {\bf I}}_n + {\rm {\bf A}}^{k +
1}} \right)^{ - 1}$, so that
\[{\rm {\bf A}}^{ D}  ={\mathop {\lim} \limits_{\lambda \to 0}}{\frac{{1}}{{\det \left( {\lambda {\rm {\bf I}} +
{\rm {\bf A}}^{ k+1 }} \right)}}}
                                       \begin{pmatrix}
                                         \sum_{s=1}^{n} a_{1s}^{(k)} R_{1s} & \ldots & \sum_{s=1}^{n} a_{1s}^{(k)} R_{ns}  \\
                                         \ldots &  \ldots &  \ldots \\
                                         \sum_{s=1}^{n} a_{ns}^{(k)} R_{1s} &  \ldots & \sum_{s=1}^{n} a_{ns}^{(k)} R_{ns} \\  \end{pmatrix}=\]
\begin{equation}
\label{lim_A_D}
{\mathop {\lim} \limits_{\lambda \to 0}}\begin{pmatrix}
 \frac{\det \left( {\lambda {\rm {\bf I}} + {\rm {\bf A}}^{ k+1}}
\right)_{1.} \left( {{\rm {\bf a}}_{1.}^{ (k)} }  \right)}{{\det
\left( {\lambda {\rm {\bf I}} + {\rm {\bf A}}^{ k+1} }
\right)}}
 & \ldots & \frac{\det \left( {\lambda {\rm {\bf I}} + {\rm {\bf
A}}^{k+1} } \right)_{n.} \left( {{\rm {\bf
a}}_{n.}^{(k)} }  \right)}{{\det \left( {\lambda {\rm {\bf I}} +
{\rm {\bf
A}}^{ k+1} } \right)}}\\
  \ldots & \ldots & \ldots \\
\frac{\det \left( {\lambda {\rm {\bf I}} + {\rm {\bf A}}^{ k+1}} \right)_{1.} \left( {{\rm {\bf a}}_{n.}^{ (k)} }
\right)}{{\det \left( {\lambda {\rm {\bf I}} + {\rm {\bf A}}^{ k+1}
} \right)}} &
 \ldots &
 \frac{\det \left( {\lambda {\rm {\bf I}} + {\rm {\bf A}}^{ k+1} } \right)_{n.} \left( {{\rm {\bf a}}_{n.}^{ (k)} }
\right)}{{\det \left( {\lambda {\rm {\bf I}} + {\rm {\bf A}}^{k+1}
} \right)}}
\end{pmatrix}
\end{equation}
Similar to the  characteristic polynomial, we have
\[ \det \left( {\lambda{\rm {\bf I}} + {\rm {\bf A}}^{k+1}
} \right) = \lambda ^{n} + d_{1} \lambda ^{n - 1} +
d_{2} \lambda ^{n - 2} + \ldots + d_{n},
\]
 where $d_{s}={  {\sum\limits_{\alpha \in I_{s,n}} {{\left| {\left( {{\rm {\bf
A}}^{k + 1}} \right)_{\alpha} ^{\alpha }} \right|}}}}$  is a
sum of the principal minors  of ${\rm {\bf A}}^{ k+1} $
of order $s$,  for all $ s=\overline{1,n-1}$, and $d_{n}=\det  {\rm {\bf A}}^{k+1} $.
Since $\rank{\rm {\bf A}}^{ k+1} = r$, then $d_{n} = d_{n - 1} = \ldots = d_{r + 1} = 0$ and
\begin{equation}
\label{eq_detA}\det \left( {\lambda {\rm {\bf I}} + {\rm {\bf A}}^{k+1} } \right)
= \lambda ^{n} + d_{1} \lambda ^{n - 1} + d_{2} \lambda ^{n - 2} + \ldots + d_{r} \lambda
^{n - r}.
 \end{equation}
Similarly we have for  all $i,j=\overline{1,n} $
\[ \det \left( {\lambda {\rm {\bf I}} + {\rm {\bf
A}}^{k+1}} \right)_{j.} \left( {{\rm {\bf
a}}_{i.}^{ (k)} } \right) = l_{1}^{\left( {ij} \right)} \lambda ^{n
- 1} + l_{2}^{\left( {ij} \right)} \lambda ^{n - 2} + \ldots +
l_{n}^{\left( {ij} \right)}, \]
 where for  all $s=\overline{1,n-1} $,
 \[l_{s}^{\left( {ij}
\right)} = {\sum\limits_{\alpha \in I_{s,n} {\left\{ {j} \right\}}}
{{\left| {\left( {{\rm {\bf A}}_{j\,.}^{k + 1} \left( {{\rm {\bf
a}}_{i.}^{\left( {k} \right)}}  \right)} \right)_{\alpha}
^{\alpha} } \right|}}},\] and \,\,$l_{n}^{\left( {i\,j} \right)} = \det
{\rm {\bf A}}^{ k+1}_{j\,.} \left(
{{\rm {\bf a}}_{i.}^{\left( {k} \right)} } \right)$.

By Lemma \ref{lem:rank_A_row},
$ \rank{\rm {\bf A}}_{j \,.}^{k + 1}
\left( {{\rm {\bf a}}_{i\,.}^{\left( {k} \right)}}  \right) \le r$, so that if
$s>r$, then for all $\alpha \in I_{s,n} {\left\{
{i} \right\}}$ and for all $i,j = \overline {1,n}$,
\[{{\left| {\left( {{\rm {\bf A}}_{j\,.}^{k + 1} \left( {{\rm {\bf
a}}_{i.}^{\left( {k} \right)}}  \right)} \right)_{\alpha}
^{\alpha} } \right|}}= 0.\]

 Therefore if $r + 1 \le s < n$, then for all $
i,j = \overline {1,n}$,
\[l_{s}^{\left( {ij}
\right)} =  {\sum\limits_{\alpha \in I_{s,n} {\left\{ {j} \right\}}}
{{\left| {\left( {{\rm {\bf A}}_{j.}^{k + 1} \left( {{\rm {\bf
a}}_{i.}^{\left( {k} \right)}}  \right)} \right)_{\alpha}
^{\alpha} } \right|}}}= 0,\]
and $l_{n}^{\left( {ij} \right)} =
\det {\rm {\bf A}}^{ k+1}_{j\,.}
\left( {{\rm {\bf a}}_{i.}^{ (k)} } \right) = 0$.
Finally we obtain
\begin{equation}\label{eq_det_A_j}
  \det \left( {\lambda {\rm {\bf I}} + {\rm {\bf A}}^{k+1} } \right)_{j.} \left( {{\rm {\bf a}}_{i.}^{(k)} } \right) =
l_{1}^{\left( {i\,j} \right)} \lambda ^{n - 1} + l_{2}^{\left(
{i\,j} \right)} \lambda ^{n - 2} + \ldots + l_{r}^{\left( {ij}
\right)} \lambda ^{n - r}.
\end{equation}

 By replacing the denominators and the nominators  of the fractions in the entries of the matrix (\ref{lim_A_D}) with  the
 expressions (\ref{eq_detA}) and (\ref{eq_det_A_j}) respectively, finally we obtain
\[
   {\rm {\bf A}}^{D}  = {\mathop {\lim} \limits_{\lambda \to 0}}
\begin{pmatrix}
 {{\frac{{l_{1}^{\left( {11} \right)} \lambda ^{n - 1} + \ldots +
l_{r}^{\left( {11} \right)} \lambda^{n - r}}}{{\lambda ^{n} +
d_{1} \lambda ^{n - 1} + \ldots + d_{r} \lambda ^{n - r}}}}}
 & \ldots & {{\frac{{l_{1}^{\left( {1n} \right)} \lambda ^{n
- 1} + \ldots + l_{r}^{\left( {1n} \right)} \lambda ^{n -
r}}}{{\lambda ^{n} + d_{1} \lambda
^{n - 1} + \ldots + d_{r} \lambda ^{n - r}}}}} \\
  \ldots & \ldots & \ldots \\
  {{\frac{{l_{1}^{\left( {n1} \right)} \lambda ^{n - 1} + \ldots +
l_{r}^{\left( {n1} \right)} \lambda ^{n - r}}}{{\lambda ^{n} +
d_{1} \lambda ^{n - 1} + \ldots + d_{r} \lambda ^{n - r}}}}} &
\ldots & {{\frac{{l_{1}^{\left( {nn} \right)} \lambda ^{n - 1} +
\ldots + l_{r}^{\left( {nn} \right)} \lambda ^{n - r}}}{{\lambda
^{n} + d_{1} \lambda ^{n - 1} + \ldots + d_{r} \lambda ^{n -
r}}}}}
\end{pmatrix}=\]
  \[ = \left( {{\begin{array}{*{20}c}
 {{\frac{{l_{r}^{\left( {11} \right)}} }{{d_{r}} }}} \hfill & {\ldots}
\hfill & {{\frac{{l_{r}^{\left( {1n} \right)}} }{{d_{r}} }}} \hfill \\
 {\ldots}  \hfill & {\ldots}  \hfill & {\ldots}  \hfill \\
 {{\frac{{l_{r}^{\left( {n1} \right)}} }{{d_{r}} }}} \hfill & {\ldots}
\hfill & {{\frac{{l_{r}^{\left( {nn} \right)}} }{{d_{r}} }}} \hfill \\
\end{array}} } \right),
\]
where for all $i,j=\overline{1,n} $,
 \[l_{r}^{\left( {ij}
\right)} = {\sum\limits_{\alpha \in I_{r,n} {\left\{ {j} \right\}}}
{{\left| {\left( {{\rm {\bf A}}_{j.}^{k + 1} \left( {{\rm {\bf
a}}_{i.}^{\left( {k} \right)}}  \right)} \right)_{\alpha}
^{\alpha} } \right|}}}.\]
This completes the proof.
\end{proof}

Using Theorem \ref{theor:dr_repr_row} we evidently can obtain another determinantal representation of the group inverse and  the following determinantal representation of the  identity ${\bf A}{\bf A}^{D}$   on  $R  ({\bf A}^{k})$
 \begin{corollary}
 If $Ind{\kern 1pt} {\rm {\bf A}} = 1 $ and $\rank{\rm {\bf
A}}^{2} = \rank{\rm {\bf A}}=r \le n$ for
${\rm {\bf A}}\in {\mathbb C}^{n\times n} $, then the
group inverse  ${\rm {\bf A}}^{g} = \left( {a_{ij}^{ g} } \right)
\in {\rm {\mathbb{C}}}_{}^{n\times n} $ possess the following
determinantal representations:
\begin{equation}\label{eq:AAg}
 a_{ij}^{g} ={\frac{\sum\limits_{\alpha \in I_{r,n} {\left\{ {j} \right\}}} {{\left|
{\left( {{\rm {\bf A}}_{j.}^{2} \left( {{\rm {\bf a}}_{i.}}
\right)} \right)_{\alpha} ^{\alpha} } \right|}}}{{\sum\limits_{\alpha \in I_{r,n}} {{\left| {\left( {{\rm {\bf
A}}^{2}} \right)_{\alpha} ^{\alpha }}  \right|}}}}},
 \end{equation}
 for all $i,j = \overline {1,n}. $
\end{corollary}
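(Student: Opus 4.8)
The plan is to derive this corollary as a direct specialization of Theorem \ref{theor:dr_repr_row} to the case $k=1$. First I would note that the hypothesis $Ind{\kern 1pt}{\rm {\bf A}} = 1$ means exactly that $k := Ind{\kern 1pt}{\rm {\bf A}} = 1$, and, as recalled in Section 2, under this hypothesis the Drazin inverse coincides with the group inverse, so that ${\rm {\bf A}}^{D} = {\rm {\bf A}}^{g}$. Moreover, the rank condition $\rank{\rm {\bf A}}^{2} = \rank{\rm {\bf A}} = r$ is precisely the hypothesis of Theorem \ref{theor:dr_repr_row} with $k=1$, since then ${\rm {\bf A}}^{k+1} = {\rm {\bf A}}^{2}$ and ${\rm {\bf A}}^{k} = {\rm {\bf A}}$.

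Next I would substitute $k=1$ into the determinantal formula (\ref{eq:dr_repr_row}). With $k=1$ we have ${\rm {\bf A}}^{k+1} = {\rm {\bf A}}^{2}$, while the $i$th row of ${\rm {\bf A}}^{k} = {\rm {\bf A}}$ is simply ${\rm {\bf a}}_{i.}^{(1)} = {\rm {\bf a}}_{i.}$, the ordinary $i$th row of ${\rm {\bf A}}$. Replacing these in both the numerator and the denominator of (\ref{eq:dr_repr_row}) yields
\[
a_{ij}^{g} = \frac{\sum_{\alpha \in I_{r,n}\{j\}} \left| \left( {\rm {\bf A}}_{j.}^{2} \left( {\rm {\bf a}}_{i.} \right) \right)_{\alpha}^{\alpha} \right|}{\sum_{\alpha \in I_{r,n}} \left| \left( {\rm {\bf A}}^{2} \right)_{\alpha}^{\alpha} \right|},
\]
which is exactly the claimed representation (\ref{eq:AAg}).

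I expect no genuine obstacle here, since the corollary is a pure specialization requiring no new computation: the whole machinery of the limit representation (Theorem \ref{theor:lim_A_k_A_k+1}) and the vanishing of the high-order coefficients in the characteristic-type polynomial has already been carried out in the proof of Theorem \ref{theor:dr_repr_row}. The only point worth an explicit line is that the rank bound underlying that argument, $\rank{\rm {\bf A}}_{j.}^{2}\left( {\rm {\bf a}}_{i.} \right) \le r$, holds; but this is simply the $k=1$ instance of Lemma \ref{lem:rank_A_row}, so it is automatic. Hence the formula follows immediately.
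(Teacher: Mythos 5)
Your specialization of Theorem \ref{theor:dr_repr_row} to $k=1$, together with the identification ${\rm {\bf A}}^{D}={\rm {\bf A}}^{g}$ when $Ind\,{\rm {\bf A}}=1$, is exactly the route the paper intends (it states the corollary as an evident consequence of that theorem without writing out the substitution). Your argument is correct and matches the paper's approach.
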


\begin{corollary}
If $Ind{\kern 1pt} {\rm {\bf A}} = k $ and $\rank{\rm {\bf A}}^{k
+ 1} = \rank{\rm {\bf A}}^{k}=r \le n$ for 
${\rm {\bf A}}\in {\mathbb C}^{n\times n} $, then the matrix ${\bf A}{\bf A}^{D}= (q_{ij})\in {\mathbb C}^{n\times n}$ possess the following determinantal representation
\begin{equation}\label{eq:AAd}
  q_{ij}= \frac{
  {\sum\limits_{\alpha \in I_{r,n} {\left\{ {j} \right\}}}
{{\left| {\left( {{\rm {\bf A}}_{j.}^{k + 1} \left( {{\rm {\bf
a}}_{i.}^{\left( {k+1} \right)}}  \right)} \right)_{\beta}
^{\beta} } \right|}}}  }
{  {\sum\limits_{\alpha \in I_{r,n}} {{\left| {\left( {{\rm {\bf
A}}^{k + 1}} \right)_{\beta} ^{\beta }} \right|}}}},
 \end{equation}
for all $i,j = \overline {1,n}. $
\end{corollary}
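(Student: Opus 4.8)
The plan is to reduce the claim to the row determinantal representation of the Drazin inverse established in Theorem \ref{theor:dr_repr_row}, exactly as the companion identity for $\mathbf{A}^{D}\mathbf{A}$ in (\ref{eq:AdA}) is obtained from the column representation of Theorem \ref{theor:dr_repr_col}. First I would write out the $(i,j)$ entry of the product as a single sum,
\[
q_{ij} = \sum_{s=1}^{n} a_{is}\, a_{sj}^{D},
\]
and substitute for $a_{sj}^{D}$ the expression (\ref{eq:dr_repr_row}). Since the denominator $\sum_{\alpha\in I_{r,n}}|(\mathbf{A}^{k+1})_{\alpha}^{\alpha}|$ of (\ref{eq:dr_repr_row}) does not depend on $s$, it factors out of the sum, and the whole computation concentrates on the numerator
\[
\sum_{s=1}^{n} a_{is}\sum_{\alpha\in I_{r,n}\{j\}}\bigl|\bigl(\mathbf{A}_{j.}^{k+1}(\mathbf{a}_{s.}^{(k)})\bigr)_{\alpha}^{\alpha}\bigr|.
\]

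The key step is to interchange the two sums and invoke the linearity of a determinant in a single row. For each fixed $\alpha\in I_{r,n}\{j\}$ the minor $\bigl|\bigl(\mathbf{A}_{j.}^{k+1}(\mathbf{a}_{s.}^{(k)})\bigr)_{\alpha}^{\alpha}\bigr|$ depends on $s$ only through its $j$th row, which is the restriction of $\mathbf{a}_{s.}^{(k)}$ to the columns indexed by $\alpha$; the remaining rows come from $\mathbf{A}^{k+1}$ and carry no $s$. Hence, by linearity in that one row,
\[
\sum_{s=1}^{n} a_{is}\,\bigl|\bigl(\mathbf{A}_{j.}^{k+1}(\mathbf{a}_{s.}^{(k)})\bigr)_{\alpha}^{\alpha}\bigr| = \Bigl|\Bigl(\mathbf{A}_{j.}^{k+1}\Bigl(\textstyle\sum_{s=1}^{n}a_{is}\,\mathbf{a}_{s.}^{(k)}\Bigr)\Bigr)_{\alpha}^{\alpha}\Bigr|.
\]
It then remains only to identify the inserted row: the $t$th entry of $\sum_{s}a_{is}\mathbf{a}_{s.}^{(k)}$ is $\sum_{s}a_{is}a_{st}^{(k)}$, which is the $(i,t)$ entry of $\mathbf{A}\,\mathbf{A}^{k}=\mathbf{A}^{k+1}$, so this row is precisely $\mathbf{a}_{i.}^{(k+1)}$. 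Summing back over $\alpha\in I_{r,n}\{j\}$ and reinstating the denominator yields exactly (\ref{eq:AAd}).

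There is no serious obstacle here; the argument is just the row-transpose of the one for $\mathbf{A}^{D}\mathbf{A}$, and the single genuine step is the linearity interchange above. The only point demanding care is bookkeeping with the index sets, and I would flag what appears to be a typographic slip in the statement: the minors in (\ref{eq:AAd}) are printed with subscript and superscript $\beta$ while the summations run over $\alpha\in I_{r,n}$. To be consistent with Theorem \ref{theor:dr_repr_row} and with the row structure of $\mathbf{A}\mathbf{A}^{D}$ these should all read $\alpha$, and I would carry out the proof with $\alpha$ throughout. I would also remark in passing that (\ref{eq:AAd}) represents the oblique projector $\mathbf{A}\mathbf{A}^{D}$ onto $R(\mathbf{A}^{k})$ along $N(\mathbf{A}^{k})$; its idempotency is automatic from the construction and requires no separate verification.
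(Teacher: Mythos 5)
Your proof is correct and follows exactly the route the paper intends: the corollary is stated with no written proof (the paper only says it follows ``evidently'' from Theorem \ref{theor:dr_repr_row}), and your expansion of $q_{ij}=\sum_{s}a_{is}a_{sj}^{D}$ followed by the interchange of sums and linearity of the minor in the replaced $j$th row is precisely the computation used for the analogous identities elsewhere in the paper (e.g.\ in the proofs of (\ref{eq:dr_AX}) and (\ref{eq:dr_XA})). You are also right that the $\beta$ sub- and superscripts in (\ref{eq:AAd}) are a typographical slip for $\alpha$.
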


\section{Cramer's rule of the Drazin inverse solutions of some  matrix
equations}
Consider a  matrix equation
\begin{equation}\label{eq:AX=B}
 {\rm {\bf A}}{\rm {\bf X}} = {\rm {\bf B}},
\end{equation}
where ${\rm {\bf
A}}\in  {\mathbb{C}}^{n\times n} $ with $Ind{\kern 1pt} {\rm {\bf A}}= k $, ${\rm {\bf B}}\in  {\mathbb{C}}^{n\times m}$ are given and ${\rm {\bf X}} \in
{\mathbb{C}}^{n\times m}$ is unknown.
\begin{theorem}(\cite{xu}, Theorem 1) If  the range space $R({\bf B})\subset R({\bf A}^{k})$,
then the matrix equation (\ref{eq:AX=B}) with constrain
$R({\bf X})\subset R({\bf A}^{k})$ has a unique solution
\[ {\bf X}={\bf A}^{D}{\bf B}.\]
\end{theorem}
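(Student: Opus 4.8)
The plan is to prove the three assertions separately: that ${\bf X}={\bf A}^{D}{\bf B}$ solves (\ref{eq:AX=B}), that it meets the range constraint, and that no other constrained solution exists. The common engine for all three is the fact that ${\bf A}{\bf A}^{D}={\bf A}^{D}{\bf A}$ acts as the identity operator on $R({\bf A}^{k})$. First I would establish this: combining property 3) of (\ref{eq:Dr_prop}) with the reformulation 1a) noted in the Remark, for any ${\bf v}={\bf A}^{k}{\bf w}\in R({\bf A}^{k})$ one has
\[
{\bf A}{\bf A}^{D}{\bf v}={\bf A}^{D}{\bf A}\,{\bf A}^{k}{\bf w}={\bf A}^{D}{\bf A}^{k+1}{\bf w}={\bf A}^{k}{\bf w}={\bf v},
\]
so ${\bf A}{\bf A}^{D}$ fixes $R({\bf A}^{k})$ pointwise. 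The Jordan decomposition recalled above gives the same fact transparently, ${\bf A}{\bf A}^{D}={\bf P}\,\mathrm{diag}({\bf I},{\bf 0})\,{\bf P}^{-1}$, the projector onto $R({\bf A}^{k})$ along $N({\bf A}^{k})$.

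Existence is then immediate: since $R({\bf B})\subset R({\bf A}^{k})$, every column of ${\bf B}$ lies in $R({\bf A}^{k})$ and is thus fixed by ${\bf A}{\bf A}^{D}$, whence ${\bf A}({\bf A}^{D}{\bf B})=({\bf A}{\bf A}^{D}){\bf B}={\bf B}$. For the constraint, I would iterate property 2) together with the commutativity 3) to write ${\bf A}^{D}={\bf A}^{k}({\bf A}^{D})^{k+1}$; since the factor ${\bf A}^{k}$ now stands on the \emph{left}, the inclusion $R({\bf M}{\bf N})\subset R({\bf M})$ yields $R({\bf A}^{D})\subset R({\bf A}^{k})$, and therefore $R({\bf X})=R({\bf A}^{D}{\bf B})\subset R({\bf A}^{D})\subset R({\bf A}^{k})$, as required.

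For uniqueness, suppose ${\bf X}_{1}$ and ${\bf X}_{2}$ are two constrained solutions and set ${\bf Y}={\bf X}_{1}-{\bf X}_{2}$. Then ${\bf A}{\bf Y}={\bf 0}$ while $R({\bf Y})\subset R({\bf A}^{k})$. Applying the pointwise-identity property column by column gives
\[
{\bf Y}={\bf A}{\bf A}^{D}{\bf Y}={\bf A}^{D}{\bf A}{\bf Y}={\bf A}^{D}{\bf 0}={\bf 0},
\]
so ${\bf X}_{1}={\bf X}_{2}$ and the constrained solution is unique.

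The step I expect to be the crux is the very first one---pinning down that ${\bf A}{\bf A}^{D}$ is precisely the projector onto $R({\bf A}^{k})$ and acts as the identity there; once this together with the commutativity ${\bf A}{\bf A}^{D}={\bf A}^{D}{\bf A}$ is in hand, existence, the range inclusion, and uniqueness all collapse to one-line manipulations. The only other place requiring mild care is the range bookkeeping for the constraint, where one must keep the power ${\bf A}^{k}$ on the left so that the containment $R({\bf M}{\bf N})\subset R({\bf M})$ points in the useful direction.
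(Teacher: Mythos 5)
Your proof is correct and complete. Note that the paper itself gives no proof of this statement: it is quoted verbatim from Xu and Wang (reference \cite{xu}) and used as a black box, so there is no in-paper argument to compare against. Your route is the standard one and every step checks out: the identity ${\bf A}{\bf A}^{D}{\bf A}^{k}{\bf w}={\bf A}^{D}{\bf A}^{k+1}{\bf w}={\bf A}^{k}{\bf w}$ correctly establishes that ${\bf A}{\bf A}^{D}$ fixes $R({\bf A}^{k})$ pointwise, which gives existence; the telescoping ${\bf A}^{D}=({\bf A}^{D})^{k+1}{\bf A}^{k}={\bf A}^{k}({\bf A}^{D})^{k+1}$ (iterating property 2) and commuting via property 3)) correctly places ${\bf A}^{k}$ on the left so that $R({\bf A}^{D}{\bf B})\subset R({\bf A}^{k})$; and the uniqueness argument via ${\bf Y}={\bf A}{\bf A}^{D}{\bf Y}={\bf A}^{D}{\bf A}{\bf Y}={\bf 0}$ is exactly the point of imposing the range constraint. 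The only cosmetic remark is that your appeal to the Jordan form to identify ${\bf A}{\bf A}^{D}$ as the projector onto $R({\bf A}^{k})$ along $N({\bf A}^{k})$ is not needed for the argument --- the algebraic computation you give first already suffices --- but it does no harm.
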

 We denote ${\rm {\bf A}}^{ k}{\rm {\bf B}}=:\hat{{\rm
{\bf B}}}= (\hat{b}_{ij})\in {\mathbb{C}}^{n\times m}$.
\begin{theorem}
If $\rank{\rm {\bf
A}}^{k + 1} = \rank{\rm {\bf A}}^{k}=r \le n$ for ${\rm {\bf A}}\in {\mathbb C}^{n\times n} $, then for Drazin inverse solution $ {\bf X}={\bf A}^{D}{\bf B}= (x_{ij})\in {\mathbb{C}}^{n\times m}$ of  (\ref{eq:AX=B}) we have for all $i=\overline {1,n} $, $j=\overline {1,m} $,
\begin{equation}
\label{eq:dr_AX} x_{ij} = {\frac{{{\sum\limits_{\beta \in
J_{r,\,n} {\left\{ {i} \right\}}} {{\left| \left( {{\rm
{\bf A}}^{ k+1}_{\,.\,i} \left( {\hat{{\rm
{\bf b}}}_{.j}} \right)} \right) {\kern 1pt} _{\beta}
^{\beta} \right|}} } }}{{{\sum\limits_{\beta \in J_{r,\,\,n}}
{{\left| {\left( {{\rm {\bf A}}^{ k+1} } \right){\kern
1pt} {\kern 1pt} _{\beta} ^{\beta} }  \right|}}} }}}.
\end{equation}
\end{theorem}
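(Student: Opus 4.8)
The plan is to combine the column determinantal representation of the Drazin inverse from Theorem \ref{theor:dr_repr_col} with the multilinearity of the determinant. By the preceding theorem of Xu and Wang \cite{xu}, the matrix $\mathbf{X} = \mathbf{A}^D\mathbf{B}$ is the (unique) Drazin inverse solution of (\ref{eq:AX=B}), so it suffices to expand this product entrywise and reorganize the resulting sums into a single principal-minor expression. Throughout I write $\hat{\mathbf{B}} = \mathbf{A}^k\mathbf{B}$ and let $\hat{\mathbf{b}}_{.j}$ denote its $j$-th column, as in the statement.

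First I would write $x_{ij} = \sum_{l=1}^{n} a_{il}^{D}\, b_{lj}$ and substitute formula (\ref{eq:dr_repr_col}) for each $a_{il}^{D}$, namely
\[
a_{il}^{D} = \frac{\sum\limits_{\beta \in J_{r,n}\{i\}} \left| \left( \mathbf{A}^{k+1}_{.\,i}(\mathbf{a}^{(k)}_{.\,l}) \right)_{\beta}^{\beta} \right|}{\sum\limits_{\beta \in J_{r,n}} \left| \left( \mathbf{A}^{k+1} \right)_{\beta}^{\beta} \right|}.
\]
The denominator is a scalar independent of $l$ and $j$, so it factors out of the summation over $l$; the numerator of $x_{ij}$ is then the double sum $\sum_{l=1}^{n} b_{lj} \sum_{\beta \in J_{r,n}\{i\}} \bigl| ( \mathbf{A}^{k+1}_{.\,i}(\mathbf{a}^{(k)}_{.\,l}) )_{\beta}^{\beta} \bigr|$, which I would rewrite, interchanging the two finite sums, as $\sum_{\beta \in J_{r,n}\{i\}} \sum_{l=1}^{n} b_{lj}\, \bigl| ( \mathbf{A}^{k+1}_{.\,i}(\mathbf{a}^{(k)}_{.\,l}) )_{\beta}^{\beta} \bigr|$.

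The crucial step is then the linearity argument applied for each fixed $\beta$. The minor $\bigl| ( \mathbf{A}^{k+1}_{.\,i}(\mathbf{a}^{(k)}_{.\,l}) )_{\beta}^{\beta} \bigr|$ is a principal minor of $\mathbf{A}^{k+1}$ in which the $i$-th column — which lies \emph{inside} the minor precisely because $\beta \in J_{r,n}\{i\}$ forces $i \in \beta$ — is replaced by the restriction to the rows indexed by $\beta$ of the $l$-th column of $\mathbf{A}^{k}$. Since the determinant depends linearly on that single column, I obtain
\[
\sum_{l=1}^{n} b_{lj} \left| \left( \mathbf{A}^{k+1}_{.\,i}(\mathbf{a}^{(k)}_{.\,l}) \right)_{\beta}^{\beta} \right| = \left| \left( \mathbf{A}^{k+1}_{.\,i}\Bigl( \sum_{l=1}^{n} b_{lj}\, \mathbf{a}^{(k)}_{.\,l} \Bigr) \right)_{\beta}^{\beta} \right|.
\]
Finally I would recognize the combined column $\sum_{l=1}^{n} b_{lj}\, \mathbf{a}^{(k)}_{.\,l}$ as exactly the $j$-th column $\hat{\mathbf{b}}_{.j}$ of $\hat{\mathbf{B}} = \mathbf{A}^{k}\mathbf{B}$, since its $p$-th entry equals $\sum_{l} (\mathbf{A}^{k})_{pl} b_{lj} = (\mathbf{A}^{k}\mathbf{B})_{pj}$. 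Substituting back yields (\ref{eq:dr_AX}).

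I do not anticipate a genuine obstacle here: the argument is essentially bookkeeping built on the representation already proved. The only point demanding care is the linearity step, where one must verify that the replaced column genuinely sits inside the principal minor — guaranteed by the condition $i \in \beta$ encoded in the index set $J_{r,n}\{i\}$ — so that passing the coefficients $b_{lj}$ through the determinant and collapsing the sum over $l$ into a single column is legitimate. Everything else follows from the fact that the common denominator is a scalar independent of the summation indices.
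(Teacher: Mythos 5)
Your proposal is correct and follows essentially the same route as the paper's own proof: expand $x_{ij}=\sum_l a_{il}^{D}b_{lj}$ via Theorem \ref{theor:dr_repr_col}, factor out the common denominator, interchange the finite sums, and use linearity of each principal minor in the replaced $i$-th column (which lies inside the minor since $i\in\beta$) to collapse $\sum_l b_{lj}\,{\rm {\bf a}}^{(k)}_{.\,l}$ into $\hat{{\rm {\bf b}}}_{.j}$. If anything, your explicit remark on why the linearity step is legitimate is more careful than the paper's presentation.
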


\begin{proof}  By Theorem  \ref{theor:dr_repr_col} we can represent the matrix
${\rm {\bf A}}^{ D} $ by (\ref{eq:dr_repr_col}). Therefore, we
obtain for all $i=\overline {1,n} $, $j=\overline {1,m} $,
\[
x_{ij}  =\sum_{s=1}^{n}
a_{is}^{D}b_{sj}=\sum_{s=1}^{n}{\frac{{{\sum\limits_{\beta \in
J_{r,\,n} {\left\{ {i} \right\}}} {{\left| \left( {{\rm
{\bf A}}^{ k+1}_{\,. \,i} \left( {{\rm {\bf
a}}_{\,.s}^{ (k)} }  \right)} \right) {\kern 1pt} _{\beta}
^{\beta}\right|} } } }}{{{\sum\limits_{\beta \in J_{r,\,n}}
{{\left| {\left( {{\rm {\bf A}}^{ k+1} } \right){\kern
1pt} _{\beta} ^{\beta} }  \right|}}} }}}\cdot b_{sj}=
\]
\[
{\frac{{{\sum\limits_{\beta \in J_{r,\,n} {\left\{ {i}
\right\}}}\sum_{s=1}^{n} {{\left| \left( {{\rm {\bf A}}^{
k+1} _{\,. \,i} \left( {{\rm {\bf a}}_{\,.s}^{
(k)} } \right)} \right) {\kern 1pt} _{\beta} ^{\beta}\right|} } }
}\cdot b_{sj}} {{{\sum\limits_{\beta \in J_{r,\,n}} {{\left|
{\left( {{\rm {\bf A}}^{ k+1} } \right){\kern 1pt}
_{\beta} ^{\beta} } \right|}}} }}}.
\]
Since ${\sum\limits_{s} {{\rm {\bf a}}_{\,.s}^{ (k)}  b_{sj}} }=
\left( {{\begin{array}{*{20}c}
 {{\sum\limits_{s} {a_{1s}^{(k)}  b_{sj}} } } \hfill \\
 {{\sum\limits_{s} {a_{2s}^{ (k)}  b_{sj}} } } \hfill \\
 { \vdots}  \hfill \\
 {{\sum\limits_{s} {a_{ns}^{(k)}  b_{sj}} } } \hfill \\
\end{array}} } \right) = \hat{{\rm {\bf b}}}_{.j}$, then it follows (\ref{eq:dr_AX}).
\end{proof}
\begin{corollary}(\cite{ky}, Theorem 4.2.)  If $Ind{\kern 1pt} {\rm {\bf A}} = k $ and $\rank{\rm {\bf
A}}^{k + 1} = \rank{\rm {\bf A}}^{k}=r \le n$ for ${\rm {\bf A}}\in {\mathbb C}^{n\times n} $, and ${\rm {\bf y}} = \left( {y_{1},\ldots ,y_{n} } \right)^{T}\in {\mathbb C}^{n}$, then for Drazin inverse solution $ {\bf x}={\bf A}^{D}{\bf y}=:  \left( {x_{1},\ldots ,x_{n} } \right)^{T}\in {\mathbb C}^{n}$ of the system of  linear equations
\[{\rm {\bf A}} \cdot {\rm {\bf x}} = {\rm {\bf y}},\]
we have for all $j = \overline {1,n}$,

\[
x_{j} = {\frac{{\sum\limits_{\beta \in J_{r,n}
{\left\{ {j} \right\}}} {{\left| {\left( {{\rm {\bf A}}^{k+1}_{.\,j} ({\rm {\bf f}}  )} \right)_{\beta} ^{\beta} }
\right|}}}}{{\left|  \left( {{\rm {\bf A}}^{k+1} }
\right)_{\beta} ^{\beta}
\right| } }},
\]
where  ${\rm {\bf f}} = {\rm {\bf A}}^{ k} {\rm {\bf y}}$.
\end{corollary}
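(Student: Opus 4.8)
The plan is to recognize the statement as the single-column specialization of the immediately preceding theorem, formula (\ref{eq:dr_AX}), and then to check that the specialization collapses exactly to the claimed expression. First I would take $m=1$ in that theorem, choosing the right-hand side ${\bf B}$ to be the single column vector ${\bf y}\in{\mathbb C}^{n\times 1}$. Under the standing hypotheses $Ind\,{\bf A}=k$ and $\rank{\bf A}^{k+1}=\rank{\bf A}^{k}=r$, the system ${\bf A}\cdot{\bf x}={\bf y}$ is just the matrix equation (\ref{eq:AX=B}) with a one-column right-hand side, whose (constrained) Drazin-inverse solution is ${\bf x}={\bf A}^{D}{\bf y}$, so the preceding theorem applies without change.

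Next I would trace the two quantities that change under this specialization. The auxiliary matrix $\hat{\bf B}={\bf A}^{k}{\bf B}$ becomes a single column, $\hat{\bf B}={\bf A}^{k}{\bf y}={\bf f}$, so its only column is $\hat{\bf b}_{.1}={\bf f}$. Substituting into (\ref{eq:dr_AX}) with the second index fixed at $1$ gives, for each entry index $i$,
\[ x_{i1}=\frac{\sum\limits_{\beta\in J_{r,n}\{i\}}\left|\left({\bf A}^{k+1}_{.\,i}({\bf f})\right)_{\beta}^{\beta}\right|}{\sum\limits_{\beta\in J_{r,n}}\left|\left({\bf A}^{k+1}\right)_{\beta}^{\beta}\right|}. \]
Since a column vector carries only one running subscript, I would then relabel the entry index $i$ as $j$ and write $x_{j}$ in place of $x_{i1}$, which is precisely the asserted formula.

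For completeness I would also note a self-contained route that avoids quoting (\ref{eq:dr_AX}): expand $x_{j}=\sum_{s=1}^{n}a_{js}^{D}y_{s}$ using the determinantal representation (\ref{eq:dr_repr_col}) of ${\bf A}^{D}$ from Theorem \ref{theor:dr_repr_col}, pull the common denominator out, and observe that the numerator becomes $\sum_{\beta\in J_{r,n}\{j\}}\sum_{s}\left|\left({\bf A}^{k+1}_{.\,j}({\bf a}^{(k)}_{.s})\right)_{\beta}^{\beta}\right|\,y_{s}$. The only genuine step is to invoke multilinearity of the determinant in its replaced $j$th column to move the sum over $s$ inside, producing the column $\sum_{s}{\bf a}^{(k)}_{.s}y_{s}={\bf A}^{k}{\bf y}={\bf f}$. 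This is the identical mechanism used in the proof of the parent theorem, so there is no real obstacle: the corollary is a direct specialization and the argument is essentially bookkeeping. The one point deserving care is the relabeling of the column index; I would also silently read the sum-free denominator as typeset, $\left|\left({\bf A}^{k+1}\right)_{\beta}^{\beta}\right|$, as the summed expression $\sum_{\beta\in J_{r,n}}\left|\left({\bf A}^{k+1}\right)_{\beta}^{\beta}\right|$ inherited from the parent formula.
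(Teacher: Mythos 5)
Your proposal is correct and matches the paper's intent exactly: the statement is presented as an immediate corollary of the preceding theorem on ${\rm {\bf A}}{\rm {\bf X}} = {\rm {\bf B}}$, obtained precisely by the $m=1$ specialization with $\hat{{\rm {\bf B}}} = {\rm {\bf A}}^{k}{\rm {\bf y}} = {\rm {\bf f}}$ that you describe, and your alternative direct expansion via (\ref{eq:dr_repr_col}) and multilinearity is just the parent theorem's own proof mechanism restated. You also correctly read the denominator in the corollary, as typeset, as the summed expression $\sum_{\beta \in J_{r,n}} \left| \left( {\rm {\bf A}}^{k+1} \right)_{\beta}^{\beta} \right|$ inherited from the theorem, which is an evident typographical omission in the paper.
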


Consider a  matrix equation
\begin{equation}\label{eq:XA=B}
 {\rm {\bf X}}{\rm {\bf A}} = {\rm {\bf B}},
\end{equation}
 where ${\rm {\bf
A}}\in{\mathbb{C}}^{m\times m}$ with  $Ind{\kern 1pt} {\rm {\bf A}} = k $, ${\rm {\bf B}}\in {\mathbb{C}}^{n\times m} $ are given and  ${\rm {\bf X}} \in
{\mathbb{C}}^{n\times m}$ is unknown.
\begin{theorem}(\cite{xu}, Theorem 2) If the null space $N({\bf B})\supset N({\bf A}^{k})$, then
the matrix equation (\ref{eq:XA=B}) with constrain
$N({\bf X})\supset N({\bf A}^{k})$ has a unique solution
\[ {\bf X}={\bf B}{\bf A}^{D}.\]
\end{theorem}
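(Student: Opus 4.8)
The plan is to prove the statement in two parts, existence and uniqueness, resting on the core-nilpotent splitting ${\mathbb C}^{m} = R({\bf A}^{k}) \oplus N({\bf A}^{k})$ that comes from the Jordan form of ${\bf A}$ displayed above, together with the fact that $P := {\bf A}^{D}{\bf A} = {\bf A}{\bf A}^{D}$ is the projector onto $R({\bf A}^{k})$ along $N({\bf A}^{k})$. Since this statement is exactly dual to the preceding theorem on ${\bf A}{\bf X} = {\bf B}$, one could alternatively pass to conjugate transposes and invoke that result through the identities $({\bf A}^{*})^{D} = ({\bf A}^{D})^{*}$ and $N({\bf B}) \supset N({\bf A}^{k}) \Leftrightarrow R({\bf B}^{*}) \subset R(({\bf A}^{*})^{k})$; I prefer the direct argument sketched below, as it is self-contained.

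For existence, I would first record two facts that follow at once from the Jordan form and the defining properties (\ref{eq:Dr_prop}): namely $N({\bf A}^{D}) = N({\bf A}^{k})$, and $R({\bf I} - {\bf A}^{D}{\bf A}) = N({\bf A}^{k})$, the latter because ${\bf I} - P$ is the complementary projector onto $N({\bf A}^{k})$ along $R({\bf A}^{k})$. Setting ${\bf X} = {\bf B}{\bf A}^{D}$, I compute ${\bf X}{\bf A} = {\bf B}\,{\bf A}^{D}{\bf A} = {\bf B} - {\bf B}({\bf I} - {\bf A}^{D}{\bf A})$. Every column of ${\bf I} - {\bf A}^{D}{\bf A}$ lies in $N({\bf A}^{k})$, and the hypothesis $N({\bf B}) \supset N({\bf A}^{k})$ forces ${\bf B}$ to annihilate that subspace, so ${\bf B}({\bf I} - {\bf A}^{D}{\bf A}) = {\bf 0}$ and hence ${\bf X}{\bf A} = {\bf B}$. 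The constraint $N({\bf X}) \supset N({\bf A}^{k})$ is verified by taking ${\bf v} \in N({\bf A}^{k}) = N({\bf A}^{D})$: then ${\bf A}^{D}{\bf v} = {\bf 0}$, so ${\bf X}{\bf v} = {\bf B}{\bf A}^{D}{\bf v} = {\bf 0}$.

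For uniqueness, suppose ${\bf X}_{1}$ and ${\bf X}_{2}$ both satisfy the equation and the constraint, and put ${\bf Y} = {\bf X}_{1} - {\bf X}_{2}$, so that ${\bf Y}{\bf A} = {\bf 0}$ and $N({\bf Y}) \supset N({\bf A}^{k})$. From ${\bf Y}{\bf A} = {\bf 0}$ I obtain ${\bf Y}{\bf A}^{k} = {\bf 0}$, that is, $R({\bf A}^{k}) \subseteq N({\bf Y})$. Combining this with $N({\bf A}^{k}) \subseteq N({\bf Y})$ and the direct-sum decomposition ${\mathbb C}^{m} = R({\bf A}^{k}) \oplus N({\bf A}^{k})$ gives $N({\bf Y}) = {\mathbb C}^{m}$, whence ${\bf Y} = {\bf 0}$ and ${\bf X}_{1} = {\bf X}_{2}$.

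The only genuinely delicate point, and the one I would flag as the main obstacle, is the bookkeeping that converts the two subspace conditions into the algebraic identities ${\bf B}({\bf I} - {\bf A}^{D}{\bf A}) = {\bf 0}$ and ${\bf A}^{D}{\bf v} = {\bf 0}$ for ${\bf v} \in N({\bf A}^{k})$. Everything else is routine once the core-nilpotent splitting and the projector interpretation of ${\bf A}^{D}{\bf A}$ are available, and these in turn rest only on the Jordan canonical form and the defining relations (\ref{eq:Dr_prop}).
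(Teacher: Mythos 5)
The paper does not prove this statement: it is quoted verbatim from reference \cite{xu} (Theorem 2 there) and used as a black box, so there is no in-paper argument to compare yours against. Your proof is correct and complete on its own terms. The two facts you lean on --- that ${\bf A}{\bf A}^{D}={\bf A}^{D}{\bf A}$ is the projector onto $R({\bf A}^{k})$ along $N({\bf A}^{k})$ and that $N({\bf A}^{D})=N({\bf A}^{k})$ --- follow immediately from the Jordan-form representation ${\bf A}^{D}={\bf P}\,\mathrm{diag}({\bf C}^{-1},{\bf 0})\,{\bf P}^{-1}$ given in Section 2, and the core--nilpotent splitting ${\mathbb C}^{m}=R({\bf A}^{k})\oplus N({\bf A}^{k})$ is exactly the index condition $\rank{\bf A}^{k}=\rank{\bf A}^{k+1}$. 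The existence computation ${\bf B}{\bf A}^{D}{\bf A}={\bf B}-{\bf B}({\bf I}-{\bf A}^{D}{\bf A})={\bf B}$ and the uniqueness argument via $N({\bf Y})\supseteq R({\bf A}^{k})\oplus N({\bf A}^{k})={\mathbb C}^{m}$ are both sound, and this is essentially the standard proof one finds in the restricted-matrix-equation literature the paper cites.
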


 We denote  ${\rm {\bf B}}{\rm {\bf A}}^{
k}=:\check{{\rm {\bf B}}}= (\check{b}_{ij})\in
{\mathbb{C}}^{n\times m}$.
\begin{theorem}
 If $\rank{\rm {\bf
A}}^{k + 1} = \rank{\rm {\bf A}}^{k}=r \le m$ for ${\rm {\bf A}}\in {\mathbb C}^{m\times m} $, then for Drazin inverse solution $ {\bf X}={\bf B}{\bf A}^{D}= (x_{ij})\in {\mathbb C}^{n\times m}$ of  (\ref{eq:XA=B}),  we have for all $i=\overline {1,n} $, $j=\overline {1,m} $,
\begin{equation}
\label{eq:dr_XA} x_{ij} = {\frac{{{\sum\limits_{\alpha \in I_{r,m}
{\left\{ {j} \right\}}} {{\left| \left( {{\rm
{\bf A}}^{ k+1} _{\,j\,.} \left( {\check{{\rm {\bf
b}}}_{i\,.}} \right)} \right)\,_{\alpha} ^{\alpha}\right|} }
}}}{{{\sum\limits_{\alpha \in I_{r,\,m}}  {{\left| {\left( {{\rm {\bf A}}^{ k+1} } \right) {\kern 1pt} _{\alpha}
^{\alpha} } \right|}}} }}}.
\end{equation}
\end{theorem}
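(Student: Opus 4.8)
The plan is to mirror the proof of the determinantal representation just established for ${\rm {\bf A}}{\rm {\bf X}} = {\rm {\bf B}}$, but to use the \emph{row} determinantal representation of the Drazin inverse (Theorem \ref{theor:dr_repr_row}, formula (\ref{eq:dr_repr_row})) in place of the column one. The reason is that here ${\rm {\bf A}}^{D}$ multiplies ${\rm {\bf B}}$ on the \emph{right}, so the free summation index will have to land in a row of the minors rather than a column. Starting from the $(i,j)$ entry of ${\rm {\bf X}} = {\rm {\bf B}}{\rm {\bf A}}^{D}$,
\[
x_{ij} = \sum_{s=1}^{m} b_{is}\, a_{sj}^{D},
\]
I would substitute for each $a_{sj}^{D}$ the expression (\ref{eq:dr_repr_row}) (with the matrix order $m$ in place of $n$). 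The denominator $\sum_{\alpha \in I_{r,m}} {\left| {\left( {{\rm {\bf A}}^{k+1}} \right)_{\alpha}^{\alpha}} \right|}$ is independent of $s$ and may be factored out, leaving in the numerator the double sum $\sum_{s=1}^{m} b_{is} \sum_{\alpha \in I_{r,m}{\left\{ {j} \right\}}} {\left| {\left( {{\rm {\bf A}}_{j.}^{k+1}\left( {{\rm {\bf a}}_{s.}^{(k)}} \right)} \right)_{\alpha}^{\alpha}} \right|}$.

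The key step is to interchange the two summations and invoke the multilinearity of the determinant. For each fixed $\alpha \in I_{r,m}{\left\{ {j} \right\}}$, the minor ${\left| {\left( {{\rm {\bf A}}_{j.}^{k+1}\left( {{\rm {\bf a}}_{s.}^{(k)}} \right)} \right)_{\alpha}^{\alpha}} \right|}$ is the determinant of a submatrix whose $j$th row is the $s$th row of ${\rm {\bf A}}^{k}$, while every other row is independent of $s$; hence it is a linear function of that single row. Consequently
\[
\sum_{s=1}^{m} b_{is}\, {\left| {\left( {{\rm {\bf A}}_{j.}^{k+1}\left( {{\rm {\bf a}}_{s.}^{(k)}} \right)} \right)_{\alpha}^{\alpha}} \right|} = {\left| {\left( {{\rm {\bf A}}_{j.}^{k+1}\left( {\sum_{s=1}^{m} b_{is}\,{\rm {\bf a}}_{s.}^{(k)}} \right)} \right)_{\alpha}^{\alpha}} \right|}.
\]
It then remains only to identify the combined row. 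Since the $s$th row of ${\rm {\bf A}}^{k}$ is ${\rm {\bf a}}_{s.}^{(k)}$ and $\check{{\rm {\bf B}}} = {\rm {\bf B}}{\rm {\bf A}}^{k}$, the $i$th row of $\check{{\rm {\bf B}}}$ is exactly $\check{{\rm {\bf b}}}_{i.} = \sum_{s=1}^{m} b_{is}\,{\rm {\bf a}}_{s.}^{(k)}$. Substituting this back into the numerator yields $\sum_{\alpha \in I_{r,m}{\left\{ {j} \right\}}} {\left| {\left( {{\rm {\bf A}}_{j.}^{k+1}\left( {\check{{\rm {\bf b}}}_{i.}} \right)} \right)_{\alpha}^{\alpha}} \right|}$, which is precisely (\ref{eq:dr_XA}).

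I do not anticipate any genuine obstacle in this argument, as the whole computation rests on the elementary linearity of the determinant in one row; the only thing that must be gotten right is the \emph{choice} of representation. One has to use the row formula (\ref{eq:dr_repr_row}) rather than the column formula (\ref{eq:dr_repr_col}), so that the summation variable $s$ appears in the replaced row and the multilinearity step closes cleanly against the right-multiplication by ${\rm {\bf B}}$; had one used the column representation the bookkeeping would not match the structure of $\check{{\rm {\bf B}}} = {\rm {\bf B}}{\rm {\bf A}}^{k}$. This is the dual counterpart of the preceding theorem, where the column representation was the correct one because ${\rm {\bf A}}^{D}$ stood on the left.
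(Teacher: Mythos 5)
Your proposal is correct and follows essentially the same route as the paper's own proof: substitute the row representation (\ref{eq:dr_repr_row}) into $x_{ij}=\sum_{s}b_{is}a_{sj}^{D}$, interchange the sums, use linearity of each minor in the replaced $j$th row, and identify $\sum_{s}b_{is}{\rm {\bf a}}_{s.}^{(k)}$ with $\check{{\rm {\bf b}}}_{i.}$. Your remark on why the row (rather than column) representation is the one that closes the computation is exactly the point the paper exploits implicitly.
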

\begin{proof}  By Theorem  \ref{theor:dr_repr_row} we can represent the matrix
${\rm {\bf A}}^{ D} $ by (\ref{eq:dr_repr_row}). Therefore, for
all $i=\overline {1,n} $, $j=\overline {1,m} $,  we obtain
\[
x_{ij}  =\sum_{s=1}^{m}b_{is} a_{sj}^{D}=\sum_{s=1}^{m}b_{is}\cdot
{\frac{{{\sum\limits_{\alpha \in I_{r,m} {\left\{ {j} \right\}}}
{{\left| \left( {{\rm {\bf A}}^{ k+1}_{\,j\,.} \left( {\bf a}_{s\,.}^{(k)} \right)}
\right)\,_{\alpha} ^{\alpha}\right|} } }}}{{{\sum\limits_{\alpha
\in I_{r,\,m}}  {{\left| {\left( {{\rm {\bf A}}^{ k+1}
} \right) {\kern 1pt} _{\alpha} ^{\alpha} } \right|}}} }}} =
\]
\[
{\frac{{{\sum_{s=1}^{m}b_{ik} \sum\limits_{\alpha \in I_{r,m}
{\left\{ {j} \right\}}} {{\left| \left( {{\rm
{\bf A}}^{k+1}_{\,j\,.} \left( {\bf a}_{s\,.}^{(k)}
\right)} \right)\,_{\alpha} ^{\alpha}\right|} } } }}
{{{\sum\limits_{\alpha \in I_{r,\,m}}  {{\left| {\left( {{\rm {\bf A}}^{ k+1} } \right) {\kern 1pt} _{\alpha} ^{\alpha} }
\right|}}} }}}
\]
Since for all $i = \overline {1,n} $
 \[{\sum\limits_{s} {{  b_{is}\rm {\bf a}}_{s\,.}^{ (k)}}
}=\begin{pmatrix}
  \sum\limits_{s} {b_{is}a_{s1}^{(k)} } & \sum\limits_{s} {b_{is}a_{s2}^{ (k)} } & \cdots & \sum\limits_{s} {b_{is}a_{sm}^{ (k)} }
\end{pmatrix}
 = \check{{\rm {\bf b}}}_{i.},\]
then it follows (\ref{eq:dr_XA}).
\end{proof}

 Consider a  matrix equation
 \begin{equation}\label{eq:AXB=D}
 {\rm {\bf A}}{\rm {\bf X}}{\rm {\bf B}} = {\rm {\bf
D}},
\end{equation}
where  ${\rm {\bf
A}}\in{\mathbb{C}}^{n\times n}$ with  $Ind{\kern 1pt} {\rm {\bf A}} = k_{1} $,  ${\rm {\bf B}}\in {\mathbb{C}}^{m\times m} $ with  $Ind{\kern 1pt} {\rm {\bf B}} = k_{2} $ and $ {\rm {\bf D}}\in{\rm
{\mathbb{C}}}^{n \times m}$ are given, and  $ {\rm {\bf X}}\in{\rm
{\mathbb{C}}}^{n \times m}$is unknown.
\begin{theorem}(\cite{xu}, Theorem 3) If $R({\bf D})\subset R({\bf A}^{k_{1}})$ and $N({\bf D})\supset N({\bf B}^{k_{2}})$, $k=max\{k_{1}, k_{2}\}$, then the matrix equation (\ref{eq:AXB=D}) with constrain
$R({\bf X})\subset R({\bf A}^{k})$ and $N({\bf X})\supset N({\bf B}^{k})$ has a unique solution
\[{\rm {\bf X}}={\rm {\bf A}}^{D}{\rm {\bf D}}{\rm {\bf B}}^{
D}.\]
\end{theorem}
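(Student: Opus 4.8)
The plan is to prove this in three stages: first verify that $\mathbf{X}=\mathbf{A}^{D}\mathbf{D}\mathbf{B}^{D}$ satisfies $\mathbf{A}\mathbf{X}\mathbf{B}=\mathbf{D}$, then check that it meets the two subspace constraints, and finally establish uniqueness. Throughout I would lean on the Jordan-form description of the Drazin inverse (Theorem of \cite{ca} quoted above), from which I extract the facts I need about the commuting spectral projectors $\mathbf{A}\mathbf{A}^{D}=\mathbf{A}^{D}\mathbf{A}$ and $\mathbf{B}\mathbf{B}^{D}=\mathbf{B}^{D}\mathbf{B}$: namely that $\mathbf{A}\mathbf{A}^{D}$ is the projector onto $R(\mathbf{A}^{k})$ along $N(\mathbf{A}^{k})$, so that it fixes every vector of $R(\mathbf{A}^{k})$ and $R(\mathbf{A}^{D})=R(\mathbf{A}^{k})$, and dually that $\mathbf{I}-\mathbf{B}^{D}\mathbf{B}$ is the projector whose range is $N(\mathbf{B}^{k})$. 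I would also record that $R(\mathbf{A}^{j})$ and $N(\mathbf{B}^{j})$ stabilize once the exponent reaches the index, so that passing from $k_{1},k_{2}$ to the common $k=\max\{k_{1},k_{2}\}$ changes nothing.

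For the first stage I would read the two hypotheses as the projector identities $\mathbf{A}\mathbf{A}^{D}\mathbf{D}=\mathbf{D}$ and $\mathbf{D}\mathbf{B}^{D}\mathbf{B}=\mathbf{D}$. The first holds because each column of $\mathbf{D}$ lies in $R(\mathbf{A}^{k_{1}})=R(\mathbf{A}^{k})$ and is therefore fixed by $\mathbf{A}\mathbf{A}^{D}$; the second holds because every column of $\mathbf{I}-\mathbf{B}^{D}\mathbf{B}$ lies in $N(\mathbf{B}^{k})\subset N(\mathbf{D})$, so $\mathbf{D}(\mathbf{I}-\mathbf{B}^{D}\mathbf{B})=\mathbf{0}$. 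Substituting $\mathbf{X}$ and regrouping then gives $\mathbf{A}\mathbf{X}\mathbf{B}=(\mathbf{A}\mathbf{A}^{D}\mathbf{D})\mathbf{B}^{D}\mathbf{B}=\mathbf{D}\mathbf{B}^{D}\mathbf{B}=\mathbf{D}$. The constraints on $\mathbf{X}$ are then immediate: $R(\mathbf{X})\subset R(\mathbf{A}^{D})=R(\mathbf{A}^{k})$, while any vector annihilated by $\mathbf{B}^{D}$ is annihilated by $\mathbf{X}$, so $N(\mathbf{X})\supset N(\mathbf{B}^{D})=N(\mathbf{B}^{k})$.

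For uniqueness I would take the difference $\mathbf{Y}$ of two admissible solutions, so that $\mathbf{A}\mathbf{Y}\mathbf{B}=\mathbf{0}$ with $R(\mathbf{Y})\subset R(\mathbf{A}^{k})$ and $N(\mathbf{Y})\supset N(\mathbf{B}^{k})$. Exactly as before, the two constraints translate into $\mathbf{A}\mathbf{A}^{D}\mathbf{Y}=\mathbf{Y}$ and $\mathbf{Y}\mathbf{B}^{D}\mathbf{B}=\mathbf{Y}$. Multiplying $\mathbf{A}\mathbf{Y}\mathbf{B}=\mathbf{0}$ on the left by $\mathbf{A}^{D}$ and on the right by $\mathbf{B}^{D}$, and using $\mathbf{A}^{D}\mathbf{A}=\mathbf{A}\mathbf{A}^{D}$ and $\mathbf{B}\mathbf{B}^{D}=\mathbf{B}^{D}\mathbf{B}$, the left-hand side collapses to $\mathbf{A}\mathbf{A}^{D}\mathbf{Y}\mathbf{B}^{D}\mathbf{B}=\mathbf{Y}$, forcing $\mathbf{Y}=\mathbf{0}$.

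The one genuinely delicate point, and the step I would be most careful about, is the bookkeeping of ranges and null spaces under the change from the individual indices $k_{1},k_{2}$ to the common $k$: one must confirm that $R(\mathbf{A}^{k_{1}})=R(\mathbf{A}^{k})=R(\mathbf{A}^{D})$ and $N(\mathbf{B}^{k_{2}})=N(\mathbf{B}^{k})=N(\mathbf{B}^{D})$, so that the hypotheses and the projector identities genuinely line up. Once those identifications are in place, the remainder is a short manipulation with the two commuting idempotents $\mathbf{A}\mathbf{A}^{D}$ and $\mathbf{B}^{D}\mathbf{B}$.
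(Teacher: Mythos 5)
The paper does not prove this statement at all: it is imported verbatim from reference \cite{xu} (Xu and Wang) and used as a black box, so there is no internal proof to compare yours against. That said, your argument is correct and complete, and it is the standard one. The key identifications you flag as delicate do hold: from the Jordan-form description, $\mathbf{A}\mathbf{A}^{D}$ is the idempotent $\mathbf{P}\,\mathrm{diag}(\mathbf{I},\mathbf{0})\,\mathbf{P}^{-1}$, whose range is $R(\mathbf{A}^{k_{1}})=R(\mathbf{A}^{j})$ for every $j\ge k_{1}$ and equals $R(\mathbf{A}^{D})$; dually $N(\mathbf{B}^{D})=N(\mathbf{B}^{k_{2}})=N(\mathbf{B}^{j})$ for $j\ge k_{2}$, and $\mathbf{I}-\mathbf{B}\mathbf{B}^{D}$ projects onto that null space. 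With those in hand, the translation of the hypotheses into $\mathbf{A}\mathbf{A}^{D}\mathbf{D}=\mathbf{D}$ and $\mathbf{D}\mathbf{B}^{D}\mathbf{B}=\mathbf{D}$, the verification $\mathbf{A}\mathbf{X}\mathbf{B}=\mathbf{A}\mathbf{A}^{D}\mathbf{D}\mathbf{B}^{D}\mathbf{B}=\mathbf{D}$, the two constraint checks, and the uniqueness step $\mathbf{Y}=\mathbf{A}\mathbf{A}^{D}\mathbf{Y}\mathbf{B}^{D}\mathbf{B}=\mathbf{A}^{D}(\mathbf{A}\mathbf{Y}\mathbf{B})\mathbf{B}^{D}=\mathbf{0}$ are all sound. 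The only cosmetic slip is the phrase ``every column of $\mathbf{I}-\mathbf{B}^{D}\mathbf{B}$ lies in $N(\mathbf{B}^{k})\subset N(\mathbf{D})$'': the inclusion should read $N(\mathbf{B}^{k})\subset N(\mathbf{D})$ as a restatement of the hypothesis $N(\mathbf{D})\supset N(\mathbf{B}^{k_{2}})$ together with $N(\mathbf{B}^{k})=N(\mathbf{B}^{k_{2}})$, which is exactly what you intend.
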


We denote  ${\rm {\bf A}}^{
 k_{1} }{\rm {\bf D}}{\rm {\bf B}}^{
 k_{2} }=:\widetilde{{\rm {\bf D}}}= (\widetilde{d}_{ij})\in
{\mathbb{C}}^{n\times m}$.
\begin{theorem}\label{theor:AXB=D}
 If  $\rank{\rm {\bf
A}}^{k_{1} + 1} = \rank{\rm {\bf A}}^{k_{1}}=r_{1} \le n$ for ${\rm {\bf A}}\in {\mathbb C}^{n\times n} $, and  $\rank{\rm {\bf
B}}^{k_{2} + 1} = \rank{\rm {\bf B}}^{k_{2}}=r_{2} \le m$ for ${\rm {\bf B}}\in {\mathbb C}^{m\times m} $, then for the
Drazin inverse solution ${\rm {\bf X}}={\rm {\bf A}}^{D}{\rm {\bf D}}{\rm {\bf B}}^{
D}=:(x_{ij})\in
{\mathbb{C}}^{n\times m}$  of (\ref{eq:AXB=D}) we have
\begin{equation}\label{eq:d^B}
x_{ij} = {\frac{{{\sum\limits_{\beta \in J_{r_{1},\,n} {\left\{
{i} \right\}}} { \left| {{\rm {\bf A}}^{ k_{1}+1} _{\,.\,i} \left( {{{\rm {\bf d}}}\,_{.\,j}^{{\rm {\bf B}}}}
\right)\, _{\beta} ^{\beta}} \right| } } }}{{{\sum\limits_{\beta
\in J_{r_{1},n}} {{\left| {\left( {{\rm {\bf A}}^{ k_{1}+1} } \right)_{\beta} ^{\beta} } \right|}} \sum\limits_{\alpha \in
I_{r_{2},m}}{{\left| {\left( {{\rm {\bf B}}^{k_{2}+1} }
\right) _{\alpha} ^{\alpha} } \right|}}} }}},
\end{equation}
or
\begin{equation}\label{eq:d^A}
 x_{ij}={\frac{{{\sum\limits_{\alpha
\in I_{r_{2},m} {\left\{ {j} \right\}}} { \left| {{\rm {\bf B}}^{k_{2}+1} _{j\,.} \left( {{{\rm {\bf
d}}}\,_{i\,.}^{{\rm {\bf A}}}} \right)\,_{\alpha} ^{\alpha}}
\right| } }}}{{{\sum\limits_{\beta \in J_{r_{1},n}} {{\left|
{\left( {{\rm {\bf A}}^{k_{1}+1} } \right) _{\beta}
^{\beta} } \right|}}\sum\limits_{\alpha \in I_{r_{2},m}} {{\left|
{\left( {{\rm {\bf B}}^{k_{2}+1} } \right) _{\alpha}
^{\alpha} } \right|}}} }}},
\end{equation}
where
  \begin{equation}
\label{eq:def_d^B_m}
   {{\rm {\bf d}}_{.\,j}^{{\rm {\bf B}}}}=\left[
\sum\limits_{\alpha \in I_{r_{2},m} {\left\{ {j} \right\}}} {
\left| {{\rm {\bf B}}^{ k_{2}+1} _{j.}
\left( {\widetilde{{\rm {\bf d}}}_{1.}} \right)\,_{\alpha} ^{\alpha}}
\right|},...,\sum\limits_{\alpha \in I_{r_{2},m} {\left\{ {j}
\right\}}} { \left| {{\rm {\bf B}}^{k_{2}+1}_{j.} \left( {\widetilde{{\rm {\bf d}}}_{n.}}
\right)\,_{\alpha} ^{\alpha}} \right|} \right]^{T},
\end{equation}
  \[
  {{\rm {\bf d}}_{i\,.}^{{\rm {\bf A}}}}=\left[
\sum\limits_{\beta \in J_{r_{1},n} {\left\{ {i} \right\}}} {
\left| {{\rm {\bf A}}^{k_{1}+1}_{.i}
\left( {\widetilde{{\rm {\bf d}}}_{.1}} \right)\,_{\beta} ^{\beta}}
\right|},...,\sum\limits_{\alpha \in I_{r_{1},n} {\left\{ {i}
\right\}}} { \left| {{\rm {\bf A}}^{k_{1}+1}_{.i} \left( {\widetilde{{\rm {\bf d}}}_{.\,m}}
\right)\,_{\beta} ^{\beta}} \right|} \right]
\]
 are the column-vector and the row-vector. ${\widetilde{{\rm {\bf d}}}_{i.}}$  and ${\widetilde{{\rm {\bf d}}}_{.j}}$ are respectively the $i$th row  and the $j$th column of $\widetilde{{\rm {\bf D}}}$ for all $i=\overline {1,n}$, $j=\overline {1,m}$.

\end{theorem}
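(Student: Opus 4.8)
The plan is to start from the closed form ${\rm {\bf X}}={\rm {\bf A}}^{D}{\rm {\bf D}}{\rm {\bf B}}^{D}$ supplied by the unique-solution result of \cite{xu} quoted just above, and to compute each entry $x_{ij}=\sum_{s=1}^{n}\sum_{t=1}^{m}a_{is}^{D}d_{st}b_{tj}^{D}$ by inserting the two determinantal representations already at our disposal: the column representation (\ref{eq:dr_repr_col}) for $a_{is}^{D}$ and the row representation (\ref{eq:dr_repr_row}) for $b_{tj}^{D}$. The two displayed formulas (\ref{eq:d^B}) and (\ref{eq:d^A}) will then arise merely from the order in which the sums over $s$ and $t$ are collapsed, i.e.\ from writing ${\rm {\bf X}}={\rm {\bf A}}^{D}({\rm {\bf D}}{\rm {\bf B}}^{D})$ in the first case and ${\rm {\bf X}}=({\rm {\bf A}}^{D}{\rm {\bf D}}){\rm {\bf B}}^{D}$ in the second. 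Since both expressions equal the same entry $({\rm {\bf A}}^{D}{\rm {\bf D}}{\rm {\bf B}}^{D})_{ij}$, the equality of (\ref{eq:d^B}) and (\ref{eq:d^A}) is automatic and requires no separate argument.

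For (\ref{eq:d^B}) I would first fix the outer index and expand $b_{tj}^{D}$ via (\ref{eq:dr_repr_row}). The crucial mechanical tool is multilinearity of the determinant in a single row or column: because each $\beta\in J_{r_{1},n}\{i\}$ contains $i$ and each $\alpha\in I_{r_{2},m}\{j\}$ contains $j$, the replaced column (resp.\ row) genuinely occurs in the bordered principal submatrix, so a scalar-weighted sum of replacement vectors may be pulled inside the determinant. Collapsing $\sum_{t}\widehat{d}_{st}\,{\rm {\bf b}}_{t.}^{(k_{2})}$, where $\widehat{{\rm {\bf D}}}={\rm {\bf A}}^{k_{1}}{\rm {\bf D}}$, recognizes this combination as the $s$th row of ${\rm {\bf A}}^{k_{1}}{\rm {\bf D}}{\rm {\bf B}}^{k_{2}}=\widetilde{{\rm {\bf D}}}$, so the $s$th entry of ${\rm {\bf A}}^{k_{1}}{\rm {\bf D}}{\rm {\bf B}}^{D}$ becomes exactly the $s$th component of ${\rm {\bf d}}_{.\,j}^{{\rm {\bf B}}}$ from (\ref{eq:def_d^B_m}), divided by $\sum_{\alpha\in I_{r_{2},m}}|({\rm {\bf B}}^{k_{2}+1})_{\alpha}^{\alpha}|$. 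A second application of multilinearity, now in the $i$th column of ${\rm {\bf A}}^{k_{1}+1}$ through (\ref{eq:dr_repr_col}), inserts this whole column and factors the common denominator out, yielding precisely (\ref{eq:d^B}).

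Formula (\ref{eq:d^A}) follows by the mirror-image grouping: expand $a_{is}^{D}$ first by (\ref{eq:dr_repr_col}), collapse $\sum_{s}(\widehat{d}_{B})_{sl}\,{\rm {\bf a}}_{.s}^{(k_{1})}$, with $\widehat{{\rm {\bf D}}}_{B}={\rm {\bf D}}{\rm {\bf B}}^{k_{2}}$, into the $l$th column $\widetilde{{\rm {\bf d}}}_{.\,l}$ of $\widetilde{{\rm {\bf D}}}$ to assemble the row vector ${\rm {\bf d}}_{i\,.}^{{\rm {\bf A}}}$, and then border ${\rm {\bf B}}^{k_{2}+1}$ in its $j$th row via (\ref{eq:dr_repr_row}). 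The only genuine subtlety — a matter of bookkeeping rather than a true obstacle — is to pair consistently the column representation of the Drazin inverse with left multiplication by a power of ${\rm {\bf A}}$ and the row representation with right multiplication by a power of ${\rm {\bf B}}$, so that the linear combinations of columns and rows reassemble correctly into $\widetilde{{\rm {\bf D}}}$ and into ${\rm {\bf d}}_{.\,j}^{{\rm {\bf B}}}$ and ${\rm {\bf d}}_{i\,.}^{{\rm {\bf A}}}$. Validity of the $r_{1}$- and $r_{2}$-sized index sets throughout is inherited from Theorems \ref{theor:dr_repr_col} and \ref{theor:dr_repr_row} (ultimately from the rank bounds of Lemmas \ref{lem:rank_A_col} and \ref{lem:rank_A_row}), so no additional vanishing argument is needed here.
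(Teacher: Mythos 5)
Your proposal is correct and takes essentially the same route as the paper: both proofs insert the column representation (\ref{eq:dr_repr_col}) of ${\rm {\bf A}}^{D}$ and the row representation (\ref{eq:dr_repr_row}) of ${\rm {\bf B}}^{D}$ into $x_{ij}=\sum_{s,t}a_{is}^{D}d_{st}b_{tj}^{D}$ and exploit linearity of the bordered principal minors in the replaced column (resp.\ row) --- legitimate precisely because $i\in\beta$ and $j\in\alpha$ --- to reassemble $\widetilde{{\rm {\bf D}}}={\rm {\bf A}}^{k_{1}}{\rm {\bf D}}{\rm {\bf B}}^{k_{2}}$ and then the vectors ${\rm {\bf d}}_{.\,j}^{{\rm {\bf B}}}$ and ${\rm {\bf d}}_{i\,.}^{{\rm {\bf A}}}$. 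The only cosmetic difference is that the paper first derives a single symmetric triple sum (\ref{eq:x_ij}) using the unit vectors ${\rm {\bf e}}_{.\,l}$ and ${\rm {\bf e}}_{t.}$ and then collapses it in two ways, whereas you collapse the two associativity groupings ${\rm {\bf A}}^{D}({\rm {\bf D}}{\rm {\bf B}}^{D})$ and $({\rm {\bf A}}^{D}{\rm {\bf D}}){\rm {\bf B}}^{D}$ directly.
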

\begin{proof}
By Theorems \ref{theor:dr_repr_col} and
\ref{theor:dr_repr_row} the Drazin inverses ${\rm {\bf
A}}^{D} = \left( {a_{ij}^{D} } \right) \in {\rm
{\mathbb{C}}}_{}^{n\times n} $ and ${\rm {\bf B}}^{D} = \left(
{b_{ij}^{D} } \right) \in {\rm {\mathbb{C}}}^{m\times m} $
possess the following determinantal representations, respectively,
\[
 a_{ij}^{D}  = {\frac{{{\sum\limits_{\beta
\in J_{r_{1},\,n} {\left\{ {i} \right\}}} { \left| {{\rm
{\bf A}}^{k_{1}+1} _{\,. \,i} \left( {{\rm {\bf
a}}_{.j}^{ (k_{1})} }  \right){\kern 1pt}  _{\beta} ^{\beta}} \right| }
} }}{{{\sum\limits_{\beta \in J_{r_{1},\,n}} {{\left| {\left(
{{\rm {\bf A}}^{{k_{1}+1}} } \right){\kern 1pt} _{\beta}
^{\beta} }  \right|}}} }}},
\]
\begin{equation}\label{eq:b+}
 b_{ij}^{D}  =
{\frac{{{\sum\limits_{\alpha \in I_{r_{2},m} {\left\{ {j}
\right\}}} { \left| {{\rm {\bf B}}^{k_{2}+1} _{j\,.\,}
({\rm {\bf b}}_{i.\,}^{ (k_{2})} )\,_{\alpha} ^{\alpha}} \right| }
}}}{{{\sum\limits_{\alpha \in I_{r_{2},m}}  {{\left| {\left( {{\rm {\bf B}}^{k_{2}+1} } \right){\kern 1pt} _{\alpha}
^{\alpha} } \right|}}} }}}.
\end{equation}
Then an entry of the Drazin inverse solution ${\rm {\bf X}}={\rm {\bf A}}^{D}{\rm {\bf D}}{\rm {\bf B}}^{
D}=:(x_{ij})\in
{\mathbb{C}}^{n\times m}$ is

\begin{equation}
\label{eq:sum+} x_{ij} = {{\sum\limits_{s = 1}^{m} {\left(
{{\sum\limits_{t = 1}^{n} {{a}_{it}^{D} d_{ts}} } } \right)}}
{b}_{sj}^{D}}.
\end{equation}
Denote  by $\hat{{\rm {\bf d}}_{.s}}$ the $s$th column of ${\rm
{\bf A}}^{ k}{\rm {\bf D}}=:\hat{{\rm {\bf D}}}=
(\hat{d}_{ij})\in {\mathbb{C}}^{n\times m}$ for all $s=\overline
{1,m}$. It follows from ${\sum\limits_{t} { {\rm {\bf a}}_{.\,t}^{
D}}d_{ts} }=\hat{{\rm {\bf d}}_{.\,s}}$ that
\[
\sum\limits_{t = 1}^{n} {{a}_{it}^{D} d_{ts}}=\sum\limits_{t =
1}^{n}{\frac{{{\sum\limits_{\beta \in J_{r_{1},\,n} {\left\{ {i}
\right\}}} { \left| {{\rm {\bf A}}^{k_{1}+1} _{\,. \,i} \left( {{\rm {\bf a}}_{.t}^{
(k_{1})} } \right) {\kern
1pt} _{\beta} ^{\beta}} \right| } } }}{{{\sum\limits_{\beta \in
J_{r_{1},\,n}} {{\left| {\left( {{\rm {\bf A}}^{k_{1}+1} }
\right){\kern 1pt} _{\beta} ^{\beta} }  \right|}}} }}}\cdot
d_{ts}=
\]
\begin{equation}\label{eq:sum_cdet}
{\frac{{{\sum\limits_{\beta \in J_{r_{1},\,n} {\left\{ {i}
\right\}}}\sum\limits_{t = 1}^{n} { \left| {{\rm {\bf
A}}^{k_{1}+1}_{\,. \,i} \left( {{\rm {\bf
a}}_{.t}^{(k_{1})} } \right) {\kern 1pt} _{\beta} ^{\beta}} \right| } }
}\cdot d_{ts}}{{{\sum\limits_{\beta \in J_{r_{1},\,n}} {{\left|
{\left( {{\rm {\bf A}}^{k_{1}+1}} \right){\kern 1pt}
_{\beta} ^{\beta} }  \right|}}} }}}={\frac{{{\sum\limits_{\beta
\in J_{r_{1},\,n} {\left\{ {i} \right\}}} { \left| {{\rm
{\bf A}}^{k_{1}+1}_{\,. \,i} \left( \hat{{\rm
{\bf d}}_{.\,s}} \right) {\kern 1pt} _{\beta} ^{\beta}} \right| }
} }}{{{\sum\limits_{\beta \in J_{r_{1},\,n}} {{\left| {\left(
{{\rm {\bf A}}^{k_{1}+1}} \right){\kern 1pt} _{\beta}
^{\beta} }  \right|}}} }}}
\end{equation}

Substituting  (\ref{eq:sum_cdet}) and (\ref{eq:b+}) in
(\ref{eq:sum+}), we obtain
\[
x_{ij} =\sum\limits_{s = 1}^{m}{\frac{{{\sum\limits_{\beta \in
J_{r_{1},\,n} {\left\{ {i} \right\}}} { \left| {{\rm {\bf
A}}^{k_{1}+1} _{\,. \,i} \left( \hat{{\rm {\bf
d}}_{.\,s}} \right) {\kern 1pt} _{\beta} ^{\beta}} \right| } }
}}{{{\sum\limits_{\beta \in J_{r_{1},\,n}} {{\left| {\left( {{\rm
{\bf A}}^{k_{1}+1}} \right){\kern 1pt} _{\beta} ^{\beta}
}  \right|}}} }}}{\frac{{{\sum\limits_{\alpha \in I_{r_{2},m}
{\left\{ {j} \right\}}} { \left| {{\rm {\bf B}}^{k_{2}+1}
_{j\,.\,} ({\rm {\bf b}}_{s.\,}^{ (k_{2})} )\,_{\alpha} ^{\alpha}}
\right| } }}}{{{\sum\limits_{\alpha \in I_{r_{2},m}}  {{\left|
{\left( {{\rm {\bf B}}^{k_{2}+1} } \right){\kern 1pt}
_{\alpha} ^{\alpha} } \right|}}} }}}.
\]
Suppose ${\rm {\bf e}}_{s.}$ and ${\rm {\bf e}}_{.\,s}$ are
respectively the unit row-vector and the unit column-vector whose
components are $0$, except the $s$th components, which are $1$. Since
\[
\hat{{\rm {\bf d}}_{.\,s}}=\sum\limits_{l = 1}^{n}{\rm {\bf
e}}_{.\,l}\hat{ d_{ls}},\,\,\,\,  {\rm {\bf b}}_{s.\,}^{
(k_{2})}=\sum\limits_{t = 1}^{m}b_{st}^{(k_{2})}{\rm {\bf e}}_{t.},\,\,\,\,
\sum\limits_{s=1}^{m}\hat{d_{ls}}b_{st}^{(k_{2})}=\widetilde{d}_{lt},
\]
then we have
\[
x_{ij} = \]
\[{\frac{{ \sum\limits_{s = 1}^{m}\sum\limits_{t =
1}^{m} \sum\limits_{l = 1}^{n} {\sum\limits_{\beta \in
J_{r_{1},\,n} {\left\{ {i} \right\}}} { \left| {{\rm {\bf
A}}^{k_{1}+1}_{\,. \,i} \left( {\rm {\bf
e}}_{.\,l} \right) {\kern 1pt} _{\beta} ^{\beta}} \right| } }
}\hat{ d_{ls}}b_{st}^{(k_{2})}{\sum\limits_{\alpha \in I_{r_{2},m}
{\left\{ {j} \right\}}} { \left| {{\rm {\bf B}}^{k_{2}+1}
_{j\,.\,} ({\rm {\bf e}}_{t.} )\,_{\alpha} ^{\alpha}} \right| } }
}{{{\sum\limits_{\beta \in J_{r_{1},\,n}} {{\left| {\left( {{\rm
{\bf A}}^{k_{1}+1}} \right){\kern 1pt} _{\beta} ^{\beta}
}  \right|}}} }{{\sum\limits_{\alpha \in I_{r_{2},m}} {{\left|
{\left( {{\rm {\bf B}}^{k_{2}+1} } \right){\kern 1pt}
_{\alpha} ^{\alpha} } \right|}}} }}    }=
\]
\begin{equation}\label{eq:x_ij}
{\frac{{ \sum\limits_{t = 1}^{m} \sum\limits_{l = 1}^{n}
{\sum\limits_{\beta \in J_{r_{1},\,n} {\left\{ {i} \right\}}}
{\left| {{\rm {\bf A}}^{k_{1}+1}_{\,.
\,i} \left( {\rm {\bf e}}_{.\,l} \right) {\kern 1pt} _{\beta}
^{\beta}} \right| } } }\,\,\widetilde{d}_{lt}{\sum\limits_{\alpha
\in I_{r_{2},m} {\left\{ {j} \right\}}} { \left| {{\rm {\bf B}}^{k_{2}+1}_{j\,.\,} ({\rm {\bf e}}_{t.} )\,_{\alpha}
^{\alpha}} \right| } } }{{{\sum\limits_{\beta \in J_{r_{1},\,n}}
{{\left| {\left( {{\rm {\bf A}}^{k_{1}+1}} \right){\kern
1pt} _{\beta} ^{\beta} }  \right|}}} }{{\sum\limits_{\alpha \in
I_{r_{2},m}} {{\left| {\left( {{\rm {\bf B}}^{k_{2}+1} }
\right){\kern 1pt} _{\alpha} ^{\alpha} } \right|}}} }}    }.
\end{equation}
Denote by
\[
 d^{{\rm {\bf A}}}_{it}:= \]
\[
{\sum\limits_{\beta \in J_{r_{1},\,n} {\left\{ {i} \right\}}} {
\left| {{\rm {\bf A}}^{k_{1}+1}_{\,.
\,i} \left( \widetilde{{\rm {\bf d}}}_{.\,t} \right){\kern 1pt}
_{\beta} ^{\beta}} \right| } }= \sum\limits_{l = 1}^{n}
{\sum\limits_{\beta \in J_{r_{1},\,n} {\left\{ {i} \right\}}} {
\left| {{\rm {\bf A}}^{k_{1}+1}_{\,.
\,i} \left( {\rm {\bf e}}_{.\,l} \right){\kern 1pt}
 _{\beta} ^{\beta}} \right| } } \widetilde{d}_{lt}
\]
the $t$th component  of a row-vector ${\rm {\bf d}}^{{\rm {\bf
A}}}_{i\,.}= (d^{{\rm {\bf A}}}_{i1},...,d^{{\rm {\bf A}}}_{im})$
for all $t=\overline {1,m}$. Substituting it in (\ref{eq:x_ij}),
we obtain
\[x_{ij} ={\frac{{ \sum\limits_{t = 1}^{m}
 d^{{\rm {\bf A}}}_{it}
}{\sum\limits_{\alpha \in I_{r_{2},m} {\left\{ {j} \right\}}} {
\left| {{\rm {\bf B}}^{k_{2}+1} _{j\,.\,} ({\rm {\bf
e}}_{t.} )\,_{\alpha} ^{\alpha}} \right| } }
}{{{\sum\limits_{\beta \in J_{r_{1},\,n}} {{\left| {\left( {{\rm
{\bf A}}^{k_{1}+1}} \right){\kern 1pt} _{\beta} ^{\beta}
}  \right|}}} }{{\sum\limits_{\alpha \in I_{r_{2},m}} {{\left|
{\left( {{\rm {\bf B}}^{k_{2}+1} } \right){\kern 1pt}
_{\alpha} ^{\alpha} } \right|}}} }}    }.
\]
Since $\sum\limits_{t = 1}^{m}
 d^{{\rm {\bf A}}}_{it}{\rm {\bf e}}_{t.}={\rm {\bf
d}}^{{\rm {\bf A}}}_{i\,.}$, then it follows (\ref{eq:d^A}).

If we denote by
\[
 d^{{\rm {\bf B}}}_{lj}:=
\sum\limits_{t = 1}^{m}\widetilde{d}_{lt}{\sum\limits_{\alpha \in
I_{r_{2},m} {\left\{ {j} \right\}}} { \left| {{\rm
{\bf B}}^{k_{2}+1} _{j\,.\,} ({\rm {\bf e}}_{t.} )\,_{\alpha}
^{\alpha}} \right| } }={\sum\limits_{\alpha \in I_{r_{2},m}
{\left\{ {j} \right\}}} {\left| {{\rm {\bf B}}^{k_{2}+1}
_{j\,.\,} (\widetilde{{\rm {\bf d}}}_{l.} )\,_{\alpha} ^{\alpha}}
\right| } }
\]
the $l$th component  of a column-vector ${\rm {\bf d}}^{{\rm {\bf
B}}}_{.\,j}= (d^{{\rm {\bf B}}}_{1j},...,d^{{\rm {\bf
B}}}_{jn})^{T}$ for all $l=\overline {1,n}$ and substitute it in
(\ref{eq:x_ij}), we obtain
\[x_{ij} ={\frac{{  \sum\limits_{l = 1}^{n}
{\sum\limits_{\beta \in J_{r_{1},\,n} {\left\{ {i} \right\}}} {
\left| {{\rm {\bf A}}^{k_{1}+1}_{\,.
\,i} \left( {\rm {\bf e}}_{.\,l} \right){\kern 1pt} _{\beta}
^{\beta}} \right| } } }\,\,d^{{\rm {\bf B}}}_{lj}
}{{{\sum\limits_{\beta \in J_{r_{1},\,n}} {{\left| {\left( {{\rm
{\bf A}}^{k_{1}+1}} \right){\kern 1pt} _{\beta} ^{\beta}
}  \right|}}} }{{\sum\limits_{\alpha \in I_{r_{2},m}} {{\left|
{\left( {{\rm {\bf B}}^{k_{2}+1} } \right){\kern 1pt}
_{\alpha} ^{\alpha} } \right|}}} }}    }.
\]
Since $\sum\limits_{l = 1}^{n}{\rm {\bf e}}_{.l}
 d^{{\rm {\bf B}}}_{lj}={\rm {\bf
d}}^{{\rm {\bf B}}}_{.\,j}$, then it follows (\ref{eq:d^B}).

\end{proof}

\section{Applications of the determinantal representations of the Drazin inverse to some differential matrix equations}
Consider the
 matrix differential equation
 \begin{equation}\label{eq:left_dif}
 {\bf X}'+ {\bf A}{\bf X}={\bf B}
\end{equation}
where $ {\bf
A}\in{\rm {\mathbb{C}}}^{n \times n}$, $ {\bf B}\in{\rm
{\mathbb{C}}}^{n \times n}$ are given, $ {\rm {\bf X}}\in{\rm
{\mathbb{C}}}^{n \times n}$ is unknown. It's well-known that the general solution of
 (\ref{eq:left_dif})
is found to be
 \[
 {\bf X}(t)=\exp^{ -{\bf A}t}\left(\int \exp^{ {\bf A}t}dt\right){\bf B}
\]
If ${\bf A}$ is invertible, then
\[\int \exp^{ {\bf A}t}dt={\bf A}^{-1}\exp^{ {\bf A}t}+{\bf G},
\]
where ${\bf G}$ is an arbitrary $n\times n$ matrix. If ${\bf A}$ is singular, then the following theorem gives an answer.
 \begin{theorem}(\cite{ca1}, Theorem 1) \label{theor:int_to_draz}If ${\bf A}$ has index $k$, then
 \[
 \int \exp^{ {\bf A}t}dt={\bf A}^{D}\exp^{ {\bf A}t}+({\bf I}-{\bf A}{\bf A}^{D})t\left[{\bf I}+\frac{{\bf A}}{2}t+\frac{{\bf A}^{2}}{3!}t^{2}+...+\frac{{\bf A}^{k-1}}{k!}t^{k-1}\right]+{\bf G}. \]

\end{theorem}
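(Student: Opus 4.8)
The plan is to verify the claimed formula by differentiation: since ${\bf G}$ is an arbitrary constant matrix, it suffices to show that the $t$-derivative of the remaining right-hand side equals the integrand $\exp^{{\bf A}t}$. Put ${\bf E}:={\bf I}-{\bf A}{\bf A}^{D}$ and let
\[
{\bf F}(t):={\bf A}^{D}\exp^{{\bf A}t}+{\bf E}\sum_{j=0}^{k-1}\frac{{\bf A}^{j}}{(j+1)!}\,t^{j+1}
\]
denote that right-hand side (the bracketed polynomial multiplied by $t$ is exactly the displayed sum). First I would differentiate term by term, using $\frac{d}{dt}\exp^{{\bf A}t}={\bf A}\exp^{{\bf A}t}$ together with the commutativity ${\bf A}{\bf A}^{D}={\bf A}^{D}{\bf A}$ from property $3)$ of (\ref{eq:Dr_prop}). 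The first summand yields ${\bf A}^{D}{\bf A}\exp^{{\bf A}t}={\bf A}{\bf A}^{D}\exp^{{\bf A}t}$, and differentiating the polynomial summand with an index shift gives ${\bf E}\sum_{j=0}^{k-1}\frac{{\bf A}^{j}}{j!}t^{j}$, so that
\[
{\bf F}'(t)={\bf A}{\bf A}^{D}\exp^{{\bf A}t}+{\bf E}\sum_{j=0}^{k-1}\frac{{\bf A}^{j}}{j!}t^{j}.
\]

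The key step is to identify the truncated sum with ${\bf E}\exp^{{\bf A}t}$. For this I would establish the projection identity ${\bf E}{\bf A}^{j}={\bf 0}$ for all $j\ge k$. Indeed, by property $1a)$ of the Remark in Section 2 we have ${\bf A}^{D}{\bf A}^{k+1}={\bf A}^{k}$, hence ${\bf A}{\bf A}^{D}{\bf A}^{k}={\bf A}^{D}{\bf A}^{k+1}={\bf A}^{k}$ and therefore ${\bf E}{\bf A}^{k}={\bf A}^{k}-{\bf A}{\bf A}^{D}{\bf A}^{k}={\bf 0}$; multiplying on the right by ${\bf A}^{j-k}$ disposes of every $j>k$. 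Consequently the whole exponential series is annihilated beyond its first $k$ terms,
\[
{\bf E}\exp^{{\bf A}t}={\bf E}\sum_{j=0}^{\infty}\frac{{\bf A}^{j}}{j!}t^{j}={\bf E}\sum_{j=0}^{k-1}\frac{{\bf A}^{j}}{j!}t^{j}.
\]

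Substituting this back into the expression for ${\bf F}'(t)$ gives
\[
{\bf F}'(t)={\bf A}{\bf A}^{D}\exp^{{\bf A}t}+{\bf E}\exp^{{\bf A}t}=\bigl({\bf A}{\bf A}^{D}+{\bf I}-{\bf A}{\bf A}^{D}\bigr)\exp^{{\bf A}t}=\exp^{{\bf A}t},
\]
which is exactly the integrand, so ${\bf F}(t)+{\bf G}$ is the general antiderivative and the formula follows. I expect the only genuinely delicate point to be the projection identity ${\bf E}{\bf A}^{j}={\bf 0}$ for $j\ge k$, handled through property $1a)$, along with the careful use of commutativity that lets ${\bf A}^{D}$, ${\bf A}{\bf A}^{D}$ and the powers ${\bf A}^{j}$ be interchanged inside the series; the index shift in the polynomial derivative is routine bookkeeping. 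A cleaner but equivalent route would pass to the decomposition ${\bf A}={\bf P}\,{\rm diag}({\bf C},{\bf N})\,{\bf P}^{-1}$ of the first Theorem, integrate $\exp^{{\bf C}t}$ on the nonsingular block and the terminating series $\exp^{{\bf N}t}=\sum_{j=0}^{k-1}\frac{{\bf N}^{j}}{j!}t^{j}$ on the nilpotent block separately, and then reassemble; this makes the origin of the polynomial correction term manifest.
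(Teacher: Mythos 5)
The paper does not prove this statement at all: it is imported verbatim as Theorem~1 of the Campbell--Meyer--Rose reference \cite{ca1}, so there is no in-paper argument to compare yours against. Your verification by differentiation is correct and self-contained. The bookkeeping is right: the bracketed polynomial times $t$ is $\sum_{j=0}^{k-1}\frac{{\bf A}^{j}}{(j+1)!}t^{j+1}$, whose derivative is $\sum_{j=0}^{k-1}\frac{{\bf A}^{j}}{j!}t^{j}$, and the derivative of ${\bf A}^{D}\exp^{{\bf A}t}$ is ${\bf A}{\bf A}^{D}\exp^{{\bf A}t}$ by property $3)$ of (\ref{eq:Dr_prop}). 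The one genuinely nontrivial step, $({\bf I}-{\bf A}{\bf A}^{D}){\bf A}^{j}={\bf 0}$ for $j\ge k$, you derive correctly from ${\bf A}{\bf A}^{D}{\bf A}^{k}={\bf A}^{D}{\bf A}^{k+1}={\bf A}^{k}$ (property $1a)$ plus commutativity) and right-multiplication by ${\bf A}^{j-k}$; this is exactly what truncates $({\bf I}-{\bf A}{\bf A}^{D})\exp^{{\bf A}t}$ to the first $k$ terms of the exponential series and makes the two pieces recombine into $\exp^{{\bf A}t}$. Your alternative sketch via the core--nilpotent decomposition ${\bf A}={\bf P}\,\mathrm{diag}({\bf C},{\bf N})\,{\bf P}^{-1}$ is essentially the argument one would find in \cite{ca1} itself: it explains \emph{why} the polynomial correction appears (it is the antiderivative of the terminating series $\exp^{{\bf N}t}$ on the nilpotent block, where ${\bf N}$ has no inverse to integrate against), whereas your differentiation argument is shorter but only confirms the formula after the fact. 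Either route is acceptable; the first is verification, the second is derivation.
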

Using Theorem \ref{theor:int_to_draz} and the power series expansion
of $\exp^{ -{\bf A}t}$, we get an explicit form for a general solution of (\ref{eq:left_dif})
 \[\begin{array}{l}
     {\bf X}(t)= \\
      \left\{{\bf A}^{D}+({\bf I}-{\bf A}{\bf A}^{D})t\left({\bf I}-\frac{{\bf A}}{2}t+\frac{{\bf A}^{2}}{3!}t^{2}-...(-1)^{k-1}\frac{{\bf A}^{k-1}}{k!}t^{k-1}\right)+{\bf G}\right\}{\bf B}.
   \end{array}
 \]
If we put ${ \bf G}={\bf 0}$, then we obtain the following  partial solution of (\ref{eq:left_dif}),

 \begin{equation}\label{eq:left_dif_part_sol}\begin{array}{c}
                                               {\bf X}(t)={\bf A}^{D}{\bf B}+({\bf B}-{\bf A}^{D}{\bf A}{\bf B})t-\frac{1}{2}({\bf A}{\bf B}-{\bf A}^{D}{\bf A}^{2}{\bf B})t^{2}+... \\
                                                \frac{(-1)^{k-1}}{k!}({\bf A}^{k-1}{\bf B}-{\bf A}^{D}{\bf A}^{k}{\bf B})t^{k}.
                                             \end{array}
\end{equation}

Denote ${\rm {\bf A}}^{l}{\rm {\bf B}}=:\widehat{{{\rm
{\bf B}}}}^{(l)}
= (\widehat{b}^{(l)}_{ij})\in {\mathbb{C}}^{n\times n}$ for all $l=\overline {1,2k}$.

 \begin{theorem} The partial solution (\ref{eq:left_dif_part_sol}), ${\bf X}(t)=(x_{ij})$, possess the following determinantal representation,
  \begin{equation}\label{eq:left_dif_repr}
\begin{array}{l}
   x_{ij}=
  {\frac{{{\sum\limits_{\beta \in
J_{r,\,n} {\left\{ {i} \right\}}} {{\left| \left( {{\rm
{\bf A}}^{k+1}_{\,.\,i} \left( {\widehat{{\rm
{\bf b}}}^{(k)}_{.j}} \right)} \right){\kern 1pt} {\kern 1pt} _{\beta}
^{\beta} \right|}} } }}{{{\sum\limits_{\beta \in J_{r,\,\,n}}
{{\left| {\left( {{\rm {\bf A}}^{ k+1} } \right){\kern
1pt} {\kern 1pt} _{\beta} ^{\beta} }  \right|}}} }}}+
 \left ({b_{ij}- {\frac{{{\sum\limits_{\beta \in
J_{r,\,n} {\left\{ {i} \right\}}} {{\left| \left( {{\rm
{\bf A}}^{k+1}_{\,.\,i} \left( {\widehat{{\rm
{\bf b}}}^{(k+1)}_{.j}} \right)} \right){\kern 1pt} {\kern 1pt} _{\beta}
^{\beta} \right|}} } }}{{{\sum\limits_{\beta \in J_{r,\,\,n}}
{{\left| {\left( {{\rm {\bf A}}^{k+1} } \right){\kern
1pt} {\kern 1pt} _{\beta} ^{\beta} }  \right|}}} }}}} \right)t\\
-\frac{1}{2} \left ({\widehat{b}^{(1)}_{ij}- {\frac{{{\sum\limits_{\beta \in
J_{r,\,n} {\left\{ {i} \right\}}} {{\left| \left( {{\rm
{\bf A}}^{k+1}_{\,.\,i} \left( {\widehat{{\rm
{\bf b}}}^{(k+2)}_{.j}} \right)} \right){\kern 1pt} {\kern 1pt} _{\beta}
^{\beta} \right|}} } }}{{{\sum\limits_{\beta \in J_{r,\,\,n}}
{{\left| {\left( {{\rm {\bf A}}^{k+1} } \right){\kern
1pt} {\kern 1pt} _{\beta} ^{\beta} }  \right|}}} }}}} \right)t^{2}+...\\\frac{(-1)^{k}}{k!}
\left ({\widehat{b}^{(k-1)}_{ij}- {\frac{{{\sum\limits_{\beta \in
J_{r,\,n} {\left\{ {i} \right\}}} {{\left| \left( {{\rm
{\bf A}}^{k+1}_{\,.\,i} \left( {\widehat{{\rm
{\bf b}}}^{(2k)}_{.j}} \right)} \right){\kern 1pt} {\kern 1pt} _{\beta}
^{\beta} \right|}} } }}{{{\sum\limits_{\beta \in J_{r,\,\,n}}
{{\left| {\left( {{\rm {\bf A}}^{k+1} } \right){\kern
1pt} {\kern 1pt} _{\beta} ^{\beta} }  \right|}}} }}}} \right)t^{k}
\end{array}
\end{equation}
for all $i,j=\overline {1,n}$.

\end{theorem}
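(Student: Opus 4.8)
The plan is to substitute the Cramer-type determinantal representation (\ref{eq:dr_AX}) into each Drazin term of the partial solution (\ref{eq:left_dif_part_sol}) and then read off the coefficient of each power of $t$. The whole argument is an entrywise substitution built on a single index identity, so no new analytic ingredient beyond the results of Section 3 is needed.

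First I would observe that the only Drazin products occurring in (\ref{eq:left_dif_part_sol}) are of the form ${\bf A}^{D}{\bf A}^{l}{\bf B}$ for $l=0,1,\ldots,k$. Since the derivation of (\ref{eq:dr_AX}) merely substitutes the determinantal representation (\ref{eq:dr_repr_col}) of ${\bf A}^{D}$ into a matrix product, that formula in fact furnishes a determinantal representation of ${\bf A}^{D}{\bf C}$ for an \emph{arbitrary} right-hand side ${\bf C}\in{\mathbb C}^{n\times n}$, with the role of $\hat{{\bf B}}={\bf A}^{k}{\bf B}$ played by ${\bf A}^{k}{\bf C}$. I would apply this with ${\bf C}={\bf A}^{l}{\bf B}$; the rank hypothesis $\rank{\bf A}^{k+1}=\rank{\bf A}^{k}=r$ required for (\ref{eq:dr_AX}) is the standing assumption and carries over verbatim.

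The key computation is identifying the hat-matrix for each term. Taking ${\bf C}={\bf A}^{l}{\bf B}$ gives ${\bf A}^{k}{\bf C}={\bf A}^{k+l}{\bf B}=\widehat{{\bf B}}^{(k+l)}$, so the $(i,j)$ entry of ${\bf A}^{D}{\bf A}^{l}{\bf B}$ is the fraction whose numerator uses the column $\widehat{{\bf b}}^{(k+l)}_{.j}$ and whose denominator is the common $\sum_{\beta\in J_{r,n}}|({\bf A}^{k+1})_{\beta}^{\beta}|$. Running $l$ from $0$ to $k$ produces exactly the superscripts $k,k+1,k+2,\ldots,2k$ of the statement: $l=0$ gives the constant term with index $k$, $l=1$ the $t$-coefficient with index $k+1$, and so on up to $l=k$ with index $2k$.

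Finally I would substitute these entrywise expressions back into (\ref{eq:left_dif_part_sol}). The plain (non-Drazin) summands ${\bf A}^{l-1}{\bf B}$ contribute their entries $\widehat{b}^{(l-1)}_{ij}$, under the convention $\widehat{b}^{(0)}_{ij}=b_{ij}$, while the scalar factors $\tfrac{(-1)^{l-1}}{l!}$ pass through unchanged; collecting the fractional Drazin pieces over their common denominator yields precisely (\ref{eq:left_dif_repr}). The only real obstacle is bookkeeping — keeping the two superscript indices $(l-1)$ and $(k+l)$ aligned across all $k+1$ terms and tracking the alternating signs — rather than any conceptual difficulty, since once the identity ${\bf A}^{k}{\bf A}^{l}{\bf B}=\widehat{{\bf B}}^{(k+l)}$ is in hand the rest is termwise.
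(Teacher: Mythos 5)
Your proposal is correct and is essentially the paper's own argument: both reduce each coefficient of $t^{m}$ to a Cramer-type fraction whose replaced column is $\widehat{{\rm {\bf b}}}^{(k+m)}_{.j}$, the key step being the absorption ${\rm {\bf A}}^{k}\cdot{\rm {\bf A}}^{m}{\rm {\bf B}}=\widehat{{\rm {\bf B}}}^{(k+m)}$ inside the replaced column. The only cosmetic difference is that you apply the representation (\ref{eq:dr_AX}) directly with ${\rm {\bf C}}={\rm {\bf A}}^{m}{\rm {\bf B}}$ in place of ${\rm {\bf B}}$, whereas the paper routes the same entrywise computation through the representation (\ref{eq:AdA}) of ${\rm {\bf A}}^{D}{\rm {\bf A}}$ and then sums against the entries of ${\rm {\bf A}}^{m-1}{\rm {\bf B}}$; both collapse to the identical column identity.
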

\begin{proof}
Using the determinantal representation of the  identity  (\ref{eq:AdA}) we obtain the following determinantal representation of the matrix ${\bf A}^{D}{\bf A}^{m}{\bf B}:=(y_{ij})$,

\[y_{ij}=\sum\limits_{s = 1}^{n}p_{is}\sum\limits_{t = 1}^{n}a^{(m-1)}_{st}b_{tj}=
{\sum\limits_{\beta \in J_{r,n} {\left\{ {i} \right\}}}}\frac{{ \sum\limits_{s = 1}^{n}{{\left|
{\left( {{\rm {\bf A}}_{.\,i}^{k+1} \left( {{\rm {\bf
a}}_{.s}}^{(k+1)} \right)} \right)_{\beta} ^{\beta} } \right|}}}\cdot \sum\limits_{t = 1}^{n}a^{(m-1)}_{st}b_{tj}}{{\sum\limits_{\beta \in J_{r,n} } {{\left|
{\left( {{\rm {\bf A}}^{k+1} } \right)_{\beta} ^{\beta} } \right|}}}}=
\]
\[{\sum\limits_{\beta \in J_{r,n} {\left\{ {i} \right\}}}}\frac{{ \sum\limits_{t= 1}^{n}{{\left|
{\left( {{\rm {\bf A}}_{.\,i}^{k+1} \left( {{\rm {\bf
a}}_{.t}}^{(k+m)} \right)} \right)_{\beta} ^{\beta} } \right|}}}\cdot b_{tj}}{{\sum\limits_{\beta \in J_{r,n} } {{\left|
{\left( {{\rm {\bf A}}^{k+1} } \right)_{\beta} ^{\beta} } \right|}}}}=
  {\frac{{{\sum\limits_{\beta \in
J_{r,\,n} {\left\{ {i} \right\}}} {{\left| \left( {{\rm
{\bf A}}^{k+1}_{\,.\,i} \left( {\widehat{{\rm
{\bf b}}}^{(k+m)}_{.j}} \right)} \right){\kern 1pt} {\kern 1pt} _{\beta}
^{\beta} \right|}} } }}{{{\sum\limits_{\beta \in J_{r,\,\,n}}
{{\left| {\left( {{\rm {\bf A}}^{ k+1} } \right){\kern
1pt} {\kern 1pt} _{\beta} ^{\beta} }  \right|}}} }}}
\]
for all $i,j=\overline {1,n}$ and $m=\overline {1,k}$.
 From this and the determinantal representation of the Drazin inverse solution (\ref{eq:dr_AX}) and the identity  (\ref{eq:AdA}) it follows (\ref{eq:left_dif_repr}).
\end{proof}
\begin{corollary}  If $Ind {\bf A}=1$, then the partial solution of (\ref{eq:left_dif}),
\[ {\bf X}(t)=(x_{ij})={\bf A}^{g}{\bf B}+({\bf B}-{\bf A}^{g}{\bf A}{\bf B})t,\]
possess the following determinantal representation
 \begin{equation}\label{eq:left_dif_repr_gr}
x_{ij}= {\frac{{{\sum\limits_{\beta \in
J_{r,\,n} {\left\{ {i} \right\}}} {{\left| \left( {{\rm
{\bf A}}^{2}_{\,.\,i} \left( {\widehat{{\rm
{\bf b}}}_{.j}}^{(1)} \right)} \right){\kern 1pt} {\kern 1pt} _{\beta}
^{\beta} \right|}} } }}{{{\sum\limits_{\beta \in J_{r,\,\,n}}
{{\left| {\left( {{\rm {\bf A}}^{ 2} } \right){\kern
1pt} {\kern 1pt} _{\beta} ^{\beta} }  \right|}}} }}}+
 \left ({b_{ij}- {\frac{{{\sum\limits_{\beta \in
J_{r,\,n} {\left\{ {i} \right\}}} {{\left| \left( {{\rm
{\bf A}}^{2}_{\,.\,i} \left( {{\widehat{{\rm
{\bf b}}}}_{.j}}^{(2)} \right)} \right){\kern 1pt} {\kern 1pt} _{\beta}
^{\beta} \right|}} } }}{{{\sum\limits_{\beta \in J_{r,\,\,n}}
{{\left| {\left( {{\rm {\bf A}}^{ 2} } \right){\kern
1pt} {\kern 1pt} _{\beta} ^{\beta} }  \right|}}} }}}} \right)t.
\end{equation}
for all $i,j=\overline {1,n}$.
\end{corollary}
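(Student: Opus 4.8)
The plan is to derive this corollary as the index-one specialization of the preceding Theorem, the one establishing the general determinantal representation (\ref{eq:left_dif_repr}). Under the hypothesis $Ind\,{\bf A}=1$ two simplifications occur at once: the integer $k$ equals $1$, so that the Drazin inverse reduces to the group inverse, ${\bf A}^D={\bf A}^g$, and the finite sum defining the partial solution truncates after its linear term.

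First I would substitute $k=1$ into the partial solution (\ref{eq:left_dif_part_sol}). Since that sum runs only up to the term in $t^k$, setting $k=1$ leaves exactly the two lowest-order contributions,
\[{\bf X}(t)={\bf A}^D{\bf B}+({\bf B}-{\bf A}^D{\bf A}{\bf B})t={\bf A}^g{\bf B}+({\bf B}-{\bf A}^g{\bf A}{\bf B})t,\]
which is the closed form asserted in the corollary. Next I would specialize the determinantal formula (\ref{eq:left_dif_repr}) itself by putting $k=1$. The constant term is the determinantal representation of ${\bf A}^g{\bf B}$ supplied by (\ref{eq:dr_AX}); it is built from $\widehat{{\bf b}}_{.j}^{(k)}$, which becomes $\widehat{{\bf b}}_{.j}^{(1)}$, while the bordering matrix ${\bf A}^{k+1}$ becomes ${\bf A}^{2}$. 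The coefficient of $t$ is $b_{ij}$ minus the representation of ${\bf A}^g{\bf A}{\bf B}$; tracing the computation of $y_{ij}$ carried out in the proof of (\ref{eq:left_dif_repr}) with $m=1$ shows this representation is built from $\widehat{{\bf b}}_{.j}^{(k+1)}$, which becomes $\widehat{{\bf b}}_{.j}^{(2)}$, again over the bordering matrix ${\bf A}^{2}$. Every term of order $t^{2}$ and higher in (\ref{eq:left_dif_repr}) disappears for precisely the same reason the corresponding terms of (\ref{eq:left_dif_part_sol}) do. Collecting the two surviving terms reproduces (\ref{eq:left_dif_repr_gr}) verbatim.

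The argument is essentially bookkeeping, so I anticipate no genuine analytic obstacle. The only point demanding attention is the alignment of superscripts under $k=1$, namely confirming that the $t^{0}$ term invokes $\widehat{{\bf b}}_{.j}^{(1)}$ and the $t^{1}$ term invokes $\widehat{{\bf b}}_{.j}^{(2)}$, both over the common denominator $\sum_{\beta\in J_{r,n}}|({\bf A}^{2})_{\beta}^{\beta}|$. Once this matching is recorded and the vanishing of the higher-order terms is noted, the proof is complete.
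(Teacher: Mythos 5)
Your proposal is correct and matches the paper's own (implicit) argument: the paper states this corollary without separate proof, treating it exactly as you do, namely as the $k=1$ specialization of the preceding theorem, where ${\bf A}^{D}={\bf A}^{g}$, the series (\ref{eq:left_dif_part_sol}) truncates after the linear term, and the constant and $t$ terms of (\ref{eq:left_dif_repr}) become the $\widehat{{\bf b}}_{.j}^{(1)}$ and $\widehat{{\bf b}}_{.j}^{(2)}$ expressions over ${\bf A}^{2}$. Your attention to the superscript alignment under $k=1$ is precisely the only bookkeeping the specialization requires.
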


Consider the
 matrix differential equation
 \begin{equation}\label{eq:right_dif}
 {\bf X}'+ {\bf X}{\bf A}={\bf B}
\end{equation}
where $ {\bf
A}\in{\rm {\mathbb{C}}}^{n \times n}$, $ {\bf B}\in{\rm
{\mathbb{C}}}^{n \times n}$ are given, $ {\rm {\bf X}}\in{\rm
{\mathbb{C}}}^{n \times n}$ is unknown. The general solution of
 (\ref{eq:right_dif})
is found to be
 \[
 {\bf X}(t)={\bf B}\exp^{ -{\bf A}t}\left(\int \exp^{ {\bf A}t}dt\right)
\]
If ${\bf A}$ is singular, then an explicit form for a general solution of (\ref{eq:right_dif}) is
 \[\begin{array}{l}
     {\bf X}(t)=\\
     {\bf B}\left\{{\bf A}^{D}+({\bf I}-{\bf A}{\bf A}^{D})t\left({\bf I}-\frac{{\bf A}}{2}t+\frac{{\bf A}^{2}}{3!}t^{2}+...(-1)^{k-1}\frac{{\bf A}^{k-1}}{k!}t^{k-1}\right)+{\bf G}\right\}.
   \end{array}
 \]
If we put ${ \bf G}={\bf 0}$, then we obtain the following  partial solution of (\ref{eq:right_dif}),
  \begin{equation}\label{eq:right_dif_part_sol}\begin{array}{c}
                                                 {\bf X}(t)={\bf B}{\bf A}^{D}+({\bf B}-{\bf B}{\bf A}{\bf A}^{D})t-\frac{1}{2}({\bf B}{\bf A}-{\bf B}{\bf A}^{2}{\bf A}^{D})t^{2}+... \\
                                                  \frac{(-1)^{k-1}}{k!}({\bf B}{\bf A}^{k-1}-{\bf B}{\bf A}^{k}{\bf A}^{D})t^{k}.
                                               \end{array}
\end{equation}
Denote ${\rm {\bf B}}{\rm {\bf A}}^{l}=:\check{{{\rm
{\bf B}}}}^{(l)}
= (\check{b}^{(l)}_{ij})\in {\mathbb{C}}^{n\times n}$ for all $l=\overline {1,2k}$.
Using the determinantal representation of the Drazin inverse solution (\ref{eq:dr_XA}), the group inverse (\ref{eq:AAg}) and the identity  (\ref{eq:AAd})
we evidently obtain the following theorem.
 \begin{theorem} The partial solution (\ref{eq:right_dif_part_sol}), ${\bf X}(t)=(x_{ij})$, possess the following determinantal representation,
  \[
\begin{array}{l}
   x_{ij}=
  {\frac{{{\sum\limits_{\alpha \in I_{r,n}
{\left\{ {j} \right\}}} {{\left| \left( {{\rm
{\bf A}}^{k+1}_{j\,.} \left( {\check{{\rm
{\bf b}}}^{(k)}_{.\,i}} \right)} \right){\kern 1pt} {\kern 1pt} _{\alpha} ^{\alpha}  \right|}} } }}{{{\sum\limits_{\alpha \in I_{r,n} }
{{\left| {\left( {{\rm {\bf A}}^{ k+1} } \right){\kern
1pt} {\kern 1pt} _{\alpha} ^{\alpha}  }  \right|}}} }}}+
 \left ({b_{ij}- {\frac{{{\sum\limits_{\alpha \in I_{r,n}
{\left\{ {j} \right\}}} {{\left| \left( {{\rm
{\bf A}}^{k+1}_{j\,.} \left( {\check{{\rm
{\bf b}}}^{(k+1)}_{i\,.}} \right)} \right){\kern 1pt} {\kern 1pt} _{\alpha} ^{\alpha}  \right|}} } }}{{{\sum\limits_{\alpha \in I_{r,n} } {{\left| {\left( {{\rm {\bf A}}^{k+1} } \right){\kern
1pt} {\kern 1pt} _{\alpha} ^{\alpha}  }  \right|}}} }}}} \right)t \\
-\frac{1}{2} \left ({\check{b}^{(1)}_{ij}- {\frac{{{\sum\limits_{\alpha \in I_{r,n}
{\left\{ {j} \right\}}} {{\left| \left( {{\rm
{\bf A}}^{k+1}_{j\,.} \left( {\check{{\rm
{\bf b}}}^{(k+2)}_{i\,.}} \right)} \right){\kern 1pt} {\kern 1pt} _{\alpha} ^{\alpha}  \right|}} } }}{{{\sum\limits_{\alpha \in I_{r,n} } {{\left| {\left( {{\rm {\bf A}}^{k+1} } \right){\kern
1pt} {\kern 1pt} _{\alpha} ^{\alpha}  }  \right|}}} }}}} \right)t^{2}+...\\\frac{(-1)^{k}}{k!}
\left ({\check{b}^{(k-1)}_{ij}- {\frac{{{\sum\limits_{\alpha \in I_{r,n}
{\left\{ {j} \right\}}} {{\left| \left( {{\rm
{\bf A}}^{k+1}_{j\,.} \left( {\check{{\rm
{\bf b}}}^{(2k)}_{i\,.}} \right)} \right){\kern 1pt} {\kern 1pt} _{\alpha} ^{\alpha} \right|}} } }}{{{\sum\limits_{\alpha \in I_{r,n} }
{{\left| {\left( {{\rm {\bf A}}^{k+1} } \right){\kern
1pt} {\kern 1pt} _{\alpha} ^{\alpha} }  \right|}}} }}}} \right)t^{k}
\end{array}
\]
for all $i,j=\overline {1,n}$.
\end{theorem}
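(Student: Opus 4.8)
The plan is to exploit that the partial solution (\ref{eq:right_dif_part_sol}) is a matrix polynomial in $t$ whose constant term is ${\bf B}{\bf A}^{D}$ and whose coefficient of $t^{m}$, for $1\le m\le k$, equals $\frac{(-1)^{m-1}}{m!}\left({\bf B}{\bf A}^{m-1}-{\bf B}{\bf A}^{m}{\bf A}^{D}\right)$. Since the $(i,j)$ entry of ${\bf B}{\bf A}^{m-1}$ is by definition $\check{b}^{(m-1)}_{ij}$, proving the asserted formula reduces to producing a determinantal representation of the single matrix ${\bf B}{\bf A}^{D}$ together with one of each product ${\bf B}{\bf A}^{m}{\bf A}^{D}$, $m=1,\dots,k$. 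This is precisely the row-side analogue of the argument used for the equation (\ref{eq:left_dif}): I would replace the column-oriented representations (\ref{eq:dr_repr_col}) and (\ref{eq:AdA}) employed there by the row-oriented ones (\ref{eq:dr_repr_row}) and (\ref{eq:AAd}).

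The constant term is handled at once: since $\check{{\bf B}}={\bf B}{\bf A}^{k}$, formula (\ref{eq:dr_XA}) applied to ${\bf X}={\bf B}{\bf A}^{D}$ delivers the leading summand, with the row $\check{{\bf b}}^{(k)}_{i.}$ sitting in the $j$th position of ${\bf A}^{k+1}_{j.}(\cdot)$. The heart of the proof is the product ${\bf B}{\bf A}^{m}{\bf A}^{D}$. Because ${\bf A}$ and ${\bf A}^{D}$ commute (property 3) of (\ref{eq:Dr_prop})), I would write ${\bf B}{\bf A}^{m}{\bf A}^{D}=\left({\bf B}{\bf A}^{m-1}\right)\left({\bf A}{\bf A}^{D}\right)$, so that its $(i,j)$ entry is $\sum_{s}\check{b}^{(m-1)}_{is}\,q_{sj}$, where $q_{sj}=({\bf A}{\bf A}^{D})_{sj}$ is given by (\ref{eq:AAd}). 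Substituting (\ref{eq:AAd}) and using that the principal minor $\left|\left({\bf A}^{k+1}_{j.}({\bf a}^{(k+1)}_{s.})\right)_{\alpha}^{\alpha}\right|$ is linear in the row replacing the $j$th row, the summation over $s$ can be carried inside the determinant; the replacing row then collapses to $\sum_{s}\check{b}^{(m-1)}_{is}\,{\bf a}^{(k+1)}_{s.}=\left({\bf B}{\bf A}^{m-1}{\bf A}^{k+1}\right)_{i.}=\check{{\bf b}}^{(m+k)}_{i.}$, the $i$th row of ${\bf B}{\bf A}^{m+k}$.

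This produces
\[
\left({\bf B}{\bf A}^{m}{\bf A}^{D}\right)_{ij}=\frac{\sum\limits_{\alpha\in I_{r,n}\{j\}}\left|\left({\bf A}^{k+1}_{j.}\left(\check{{\bf b}}^{(m+k)}_{i.}\right)\right)_{\alpha}^{\alpha}\right|}{\sum\limits_{\alpha\in I_{r,n}}\left|\left({\bf A}^{k+1}\right)_{\alpha}^{\alpha}\right|},
\]
which for $m=1,\dots,k$ supplies exactly the fractions attached to $\check{{\bf b}}^{(k+1)}_{i.},\check{{\bf b}}^{(k+2)}_{i.},\dots,\check{{\bf b}}^{(2k)}_{i.}$ in the claimed expression. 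Inserting these representations, together with the one for the constant term, into the coefficient $\frac{(-1)^{m-1}}{m!}\left(\check{b}^{(m-1)}_{ij}-\left({\bf B}{\bf A}^{m}{\bf A}^{D}\right)_{ij}\right)$ of $t^{m}$ and summing over $m$ reconstructs the stated formula; the key structural point is that every fraction carries the common denominator $\sum_{\alpha\in I_{r,n}}\left|\left({\bf A}^{k+1}\right)_{\alpha}^{\alpha}\right|$, so the terms combine without adjustment. I expect the only real obstacle to be the linearity step that collapses $\sum_{s}$ into a single replaced row — the exact row-analogue of (\ref{eq:sum_cdet}) in the proof of Theorem \ref{theor:AXB=D} — and the accompanying index bookkeeping, whereby the extra factor ${\bf A}^{m-1}$ on the left raises the power inside the numerator from $k$ (in the constant term) to $m+k$; once this is in place, the remaining manipulations are routine.
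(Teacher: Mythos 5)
Your proposal is correct and is essentially the proof the paper intends: the paper states this theorem with the remark that it follows ``evidently'' from (\ref{eq:dr_XA}), (\ref{eq:AAg}) and (\ref{eq:AAd}), and your argument is exactly the row-side mirror of the detailed proof given for the preceding theorem on ${\bf X}'+{\bf A}{\bf X}={\bf B}$ (there the factorization ${\bf A}^{D}{\bf A}\cdot{\bf A}^{m-1}{\bf B}$ and column-linearity are used; you use ${\bf B}{\bf A}^{m-1}\cdot{\bf A}{\bf A}^{D}$ and row-linearity, arriving at the same $\check{{\bf b}}^{(k+m)}_{i.}$ shift). No gaps.
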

\begin{corollary}  If $Ind {\bf A}=1$, then the partial solution of (\ref{eq:right_dif}),
\[ {\bf X}(t)=(x_{ij})={\bf B}{\bf A}^{g}+({\bf B}-{\bf B}{\bf A}{\bf A}^{g})t,\]
possess the following determinantal representation
 \[
x_{ij}= {\frac{{{\sum\limits_{\alpha \in I_{r,n} {\left\{ {j} \right\}}} {{\left| \left( {{\rm
{\bf A}}^{2}_{j\,.} \left( {\widehat{{\rm
{\bf b}}}_{i\,.}}^{(1)}  \right)} \right){\kern 1pt} {\kern 1pt} _{\alpha}
^{\alpha} \right|}} } }}{{{\sum\limits_{\alpha \in I_{r,n}}
{{\left| {\left( {{\rm {\bf A}}^{ 2} } \right){\kern
1pt} {\kern 1pt} _{\alpha} ^{\alpha} }  \right|}}} }}}+
 \left ({b_{ij}- {\frac{{{\sum\limits_{\alpha \in I_{r,n} {\left\{ {j} \right\}}} {{\left| \left( {{\rm
{\bf A}}^{2}_{j\,.} \left( {{\widehat{{\rm
{\bf b}}}}_{i\,.}}^{(2)} \right)} \right){\kern 1pt} {\kern 1pt} _{\alpha}
^{\alpha} \right|}} } }}{{{\sum\limits_{\alpha  \in I_{r,n}}
{{\left| {\left( {{\rm {\bf A}}^{ 2} } \right){\kern
1pt} {\kern 1pt} _{\alpha}
^{\alpha} }  \right|}}} }}}} \right)t.
\]
for all $i,j=\overline {1,n}$.
\end{corollary}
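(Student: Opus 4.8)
The plan is to obtain the statement as the $Ind\,\mathbf{A}=1$ specialization of the preceding theorem for equation (\ref{eq:right_dif}). First I would record the simplifications forced by $k=1$: the Drazin inverse collapses to the group inverse, $\mathbf{A}^{D}=\mathbf{A}^{g}$, the relevant power becomes $\mathbf{A}^{k+1}=\mathbf{A}^{2}$, and the series in the partial solution (\ref{eq:right_dif_part_sol}) terminates after its linear term, because its general summand carries the factor $t^{k}$ and here $k=1$. Thus $\mathbf{X}(t)=\mathbf{B}\mathbf{A}^{g}+(\mathbf{B}-\mathbf{B}\mathbf{A}\mathbf{A}^{g})t$, and it remains only to represent determinantally the constant matrix $\mathbf{B}\mathbf{A}^{g}$ and the matrix $\mathbf{B}\mathbf{A}\mathbf{A}^{g}$ inside the coefficient of $t$.

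For the constant term I would apply the representation (\ref{eq:dr_XA}) of the Drazin inverse solution of $\mathbf{X}\mathbf{A}=\mathbf{B}$ at $k=1$, where $\check{\mathbf{B}}=\mathbf{B}\mathbf{A}$; equivalently one may read it straight off the row determinantal representation (\ref{eq:AAg}) of $\mathbf{A}^{g}$. Either way the $(i,j)$ entry of $\mathbf{B}\mathbf{A}^{g}$ becomes the first fraction of the claim, with the $j$th row of $\mathbf{A}^{2}$ replaced by the $i$th row of $\mathbf{B}\mathbf{A}$ and with normalising denominator $\sum_{\alpha\in I_{r,n}}|(\mathbf{A}^{2})_{\alpha}^{\alpha}|$.

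For the coefficient of $t$ I would treat $\mathbf{B}\mathbf{A}\mathbf{A}^{g}=\mathbf{B}(\mathbf{A}\mathbf{A}^{D})$ by starting from the representation (\ref{eq:AAd}) of the identity $\mathbf{A}\mathbf{A}^{D}=(q_{ij})$ and forming $(\mathbf{B}\mathbf{A}\mathbf{A}^{g})_{ij}=\sum_{s}b_{is}q_{sj}$. Moving the sum over $s$ inside the determinant and using linearity of the determinant in the replaced $j$th row folds $\sum_{s}b_{is}\mathbf{a}_{s.}^{(k+1)}$ into the $i$th row of $\mathbf{B}\mathbf{A}^{k+1}=\mathbf{B}\mathbf{A}^{2}$; the denominator $\sum_{\alpha\in I_{r,n}}|(\mathbf{A}^{2})_{\alpha}^{\alpha}|$ is independent of $s$ and factors out unchanged, while Lemma \ref{lem:rank_A_row} guarantees vanishing of the higher-order minors so that this common denominator is the correct one. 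This produces the second fraction of the claim, whence subtracting it from $b_{ij}$, scaling by $t$, and adding the constant term yields the asserted formula for $x_{ij}$.

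I expect the only non-routine point to be this folding of $\sum_{s}b_{is}q_{sj}$ into a single row-replacement determinant. However, it is the row-analogue of the column argument already executed in the proofs for $\mathbf{A}\mathbf{X}=\mathbf{B}$ and $\mathbf{A}\mathbf{X}\mathbf{B}=\mathbf{D}$, and it mirrors the treatment of $\mathbf{A}^{D}\mathbf{A}^{m}\mathbf{B}$ in the companion theorem for equation (\ref{eq:left_dif}), so no genuinely new difficulty arises; the remainder is a direct substitution of $k=1$.
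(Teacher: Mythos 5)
Your proposal is correct and follows essentially the same route the paper takes: the paper proves the left-hand analogue explicitly by folding $\sum_s b_{is}$ into the replaced row/column via multilinearity and then states that the right-hand theorem and this corollary follow from (\ref{eq:dr_XA}), (\ref{eq:AAg}) and (\ref{eq:AAd}) in exactly the way you describe. The only point worth flagging is notational: the vector in the corollary should be read as $\check{{\rm {\bf b}}}_{i.}^{(l)}$, the $i$th row of ${\bf B}{\rm {\bf A}}^{l}$ (as you correctly do), despite the statement printing the hat symbol reserved for ${\rm {\bf A}}^{l}{\bf B}$.
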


\section{Examples}
In this section, we give  examples to illustrate our results.
\subsection{Example 1}
 Let
us consider the matrix equation
\begin{equation}\label{eq_ex:AXB=D}
 {\rm {\bf A}}{\rm {\bf X}}{\rm {\bf B}} = {\rm {\bf
D}},
\end{equation}
where
\[{\bf A}=\begin{pmatrix}
  2 & 0 & 0 \\
  -i & i & i \\
  -i & -i & -i
\end{pmatrix},\,\, {\bf B}=\begin{pmatrix}
  1 & -1 & 1 \\
  i & -i & i \\
  -1 & 1 & 2
\end{pmatrix},\,\, {\bf D}=\begin{pmatrix}
  1 & i & 1 \\
  i & 0 & 1 \\
  1 & i & 0
\end{pmatrix}.\]
We shall find the Drazin inverse solution of (\ref{eq_ex:AXB=D}) by
(\ref{eq:d^B}). We obtain
\[{\bf A}^{2}=\begin{pmatrix}
  4 & 0 & 0 \\
  2-2i & 0 & 0 \\
  -2-2i & 0 & 0
\end{pmatrix},\,{\bf A}^{3}=\begin{pmatrix}
  8 & 0 & 0 \\
  4-4i & 0 & 0 \\
  -4-4i & 0 & 0
\end{pmatrix},\] \[ {\bf B}^{2}=\begin{pmatrix}
  -i & i & 3-i \\
  1 & -1 & 1+3i \\
  -3+i & 3-i & 3+i
\end{pmatrix}.\]
Since ${\rm rank}\,{\rm {\bf A}} = 2$ and ${\rm rank}\,{\rm {\bf A}}^{2}= {\rm rank}\,{\rm {\bf A}}^{2} = 1$, then $k_{1}={\rm Ind}\,{\rm {\bf A}}=2$ and $r_{1}=1$. Since ${\rm rank}\,{\rm {\bf B}} ={\rm rank}\,{\rm {\bf B}}^{2} = 2$, then $k_{2}={\rm Ind}\,{\rm {\bf B}}=1$ and $r_{2}=2$.
 Then we have
\[ {\rm {\bf \widetilde{D}}}= {\rm {\bf
A}}^{2}{\rm {\bf D}}{\rm {\bf B}}=\begin{pmatrix}
  -4 & 4 & 8\\
  -2+2i & 2-2i & 4-4i\\
  2+2i & -2-2i & -4-4i
  \end{pmatrix},\]
  and ${{{\sum\limits_{\beta \in J_{1,\,3}}
{{\left| {\left( {{\rm {\bf A}}^{ 3} } \right) {\kern
1pt} _{\beta} ^{\beta} } \right|}}} }}=
8+0+0=8,$
\[\begin{array}{l}
    {{{\sum\limits_{\alpha \in I_{2,\,3}}  {{\left| {\left( {{\rm {\bf B}}^{2} } \right) {\kern 1pt} _{\alpha} ^{\alpha} }
\right|}}} }}=\\ \det\begin{pmatrix}
  -i & i \\
  1 & -1
\end{pmatrix}+\det\begin{pmatrix}
  -1 & 1+3i \\
  3-i & 3+i
\end{pmatrix}+\det\begin{pmatrix}
  -i & 3-i \\
  -3+i & 3+i
\end{pmatrix}= \\
    0+(-9-9i)+(9-9i)=-18i.
  \end{array}
\]
By (\ref{eq:def_d^B_m}), we can get \[{\bf d}_{.1}^{{\bf
B}}=\begin{pmatrix}
  12-12i \\
  -12i \\
  -12
\end{pmatrix},\,\,{\bf d}_{.2}^{{\bf  B}}=\begin{pmatrix}
  -12+12i \\
  12i \\
  12
\end{pmatrix},\,\,{\bf d}_{.3}^{{\bf  B}}=\begin{pmatrix}
  8 \\
  -12-12i\\
  -12+12i
\end{pmatrix}.\]
Since ${\rm {\bf A}}^{ 3}_{\,.\,1}
\left( {{{\rm {\bf d}}}\,_{.\,1}^{{\rm {\bf B}}}} \right)=
\begin{pmatrix}
  12-12i & 0 & 0 \\
   -12i & 0 & 0 \\
  -12 & 0 & 0
\end{pmatrix}$, then finally we obtain
\[
x_{11} = {\frac{{{\sum\limits_{\beta \in J_{1,\,3} {\left\{ {1}
\right\}}} { \left| {{\rm {\bf A}}^{ 3}_{\,.\,1} \left( {{{\rm {\bf d}}}\,_{.\,1}^{{\rm {\bf B}}}}
\right)\, _{\beta} ^{\beta}} \right| } } }}{{{\sum\limits_{\beta
\in J_{1,3}} {{\left| {\left( {{\rm {\bf A}}^{ 3} }
\right)_{\beta} ^{\beta} } \right|}} \sum\limits_{\alpha \in
I_{2,3}}{{\left| {\left( {{\rm {\bf B}}^{2} }
\right) _{\alpha} ^{\alpha} } \right|}}} }}}=\frac{12-12i}{8\cdot (-18i)}=\frac{1+i}{12}.\]
Similarly,
\[
  x_{12} =\frac{-12+12i}{8\cdot (-18i)}=\frac{-1-i}{12},\,\,\,
  x_{13} =\frac{8}{8\cdot (-18i)}=\frac{i}{18},\]

   \[x_{21} =\frac{-12i}{8\cdot (-18i)}=\frac{1}{12},\,\,
   x_{22} =\frac{12i}{8\cdot (-18i)}=-\frac{1}{12},\,\,
  x_{23} =\frac{-12-12i}{8\cdot (-18i)}=\frac{1-i}{12},\]

   \[   x_{31} =\frac{12}{8\cdot (-18i)}=-\frac{i}{12},\,\,
    x_{32} =\frac{-12}{8\cdot (-18i)}=\frac{i}{12}.
\,\,
  x_{33} =\frac{-12+12i}{8\cdot (-18i)}=\frac{-1-i}{12}.\]
  Then
\[{\rm {\bf X}}=\left(
                                    \begin{array}{ccc}
                                      \frac{1+i}{12} & \frac{-1-i}{12} & \frac{i}{18} \\
                                      \frac{1}{12} & -\frac{1}{12} & \frac{1-i}{12} \\
                                      -\frac{i}{12} & \frac{i}{12} & \frac{-1-i}{12}\\
                                    \end{array}
                                  \right)
                                   \]
is the Drazin inverse solution of (\ref{eq_ex:AXB=D}).
\subsection{Example 2}
 Let
us consider the differential matrix equation
\begin{equation}\label{eq_ex:X'=AX+B}
{\bf X}'+ {\rm {\bf A}} {\bf X} = {\rm {\bf
B}},
\end{equation}
where
\[ {\bf A}=\begin{pmatrix}
  1 & -1 & 1 \\
  i & -i & i \\
  -1 & 1 & 2
\end{pmatrix},\,\, {\bf B}=\begin{pmatrix}
  1 & i & 1 \\
  i & 0 & 1 \\
  1 & i & 0
\end{pmatrix}.\]
Since ${\rm rank}\,{\rm {\bf A}} ={\rm rank}\,{\rm {\bf A}}^{2} = 2$, then $k={\rm Ind}\,{\rm {\bf A}}=1$ and $r=2$.
The matrix ${\bf A}$ is the group inverse. We shall find the partial solution of (\ref{eq_ex:X'=AX+B}) by (\ref{eq:left_dif_repr_gr}). We have
\[ {\bf A}^{2}=\begin{pmatrix}
  -i & i & 3-i \\
  1 & -1 & 1+3i \\
  -3+i & 3-i & 3+i
\end{pmatrix},\,\, \widehat{{{\rm
{\bf B}}}}^{(1)}={\bf A}{\bf B}= \begin{pmatrix}
 2 -i & 2i & 0 \\
  1+2i & -2 & 0 \\
  1+i & i & 0
\end{pmatrix},\] \[\widehat{{{\rm
{\bf B}}}}^{(2)}={\bf A}^{2}{\bf B}= \begin{pmatrix}
 2 -2i & 2+3i & 0 \\
  2+2i & -3+2i & 0 \\
  1+5i & -2 & 0
\end{pmatrix}.\]
 and
\[\begin{array}{l}
    {{{\sum\limits_{\alpha \in J_{2,\,3}}  {{\left| {\left( {{\rm {\bf A}}^{2} } \right) {\kern 1pt} _{\beta} ^{\beta} }
\right|}}} }}=\\ \det\begin{pmatrix}
  -i & i \\
  1 & -1
\end{pmatrix}+\det\begin{pmatrix}
  -1 & 1+3i \\
  3-i & 3+i
\end{pmatrix}+\det\begin{pmatrix}
  -i & 3-i \\
  -3+i & 3+i
\end{pmatrix}= \\
    0+(-9-9i)+(9-9i)=-18i.
  \end{array}
\]
Since $\left( {{\rm {\bf A}}^{ 2} } \right)_{\,.\,1}
 \left( {{\widehat{{\rm
{\bf b}}}}_{.1}}^{(1)} \right)=
\begin{pmatrix}
  2-i & i & 3-i \\
   1+2i &-1 & 1+3i \\
 1+i&  3-i & 3+i
\end{pmatrix}$ and \[\left( {{\rm {\bf A}}^{ 2} } \right)_{\,.\,1}
 \left( {{\widehat{{\rm
{\bf b}}}}_{.1}}^{(2)} \right)=
\begin{pmatrix}
  2-2i & i & 3-i \\
   2+2i &-1 & 1+3i \\
 1+5i&  3-i & 3+i
\end{pmatrix},\] then finally we obtain
\[\begin{array}{c}
    x_{11}= {\frac{{{\sum\limits_{\beta \in
J_{2,3} {\left\{ {1} \right\}}} {{\left| \left( {{\rm
{\bf A}}^{2}_{\,.\,1} \left( {\widehat{{\rm
{\bf b}}}_{.1}}^{(1)} \right)} \right){\kern 1pt} {\kern 1pt} _{\beta}
^{\beta} \right|}} } }}{{{\sum\limits_{\beta \in J_{2,3}}
{{\left| {\left( {{\rm {\bf A}}^{ 2} } \right){\kern
1pt} {\kern 1pt} _{\beta} ^{\beta} }  \right|}}} }}}+
 \left ({b_{11}- {\frac{{{\sum\limits_{\beta \in
J_{2,3} {\left\{ {1} \right\}}} {{\left| \left( {{\rm
{\bf A}}^{2}_{\,.\,1} \left( {{\widehat{{\rm
{\bf b}}}}_{.1}}^{(2)} \right)} \right){\kern 1pt} {\kern 1pt} _{\beta}
^{\beta} \right|}} } }}{{{\sum\limits_{\beta \in J_{2,3}}
{{\left| {\left( {{\rm {\bf A}}^{ 2} } \right){\kern
1pt} {\kern 1pt} _{\beta} ^{\beta} }  \right|}}} }}}} \right)t=\\
    \frac{3-3i}{-18i}+ \left(1-\frac{-18i}{-18i}\right)t=\frac{1+i}{6}.
  \end{array}
\]
Similarly,
\[x_{12}=\frac{-3+3i}{-18i}+ \left(i-\frac{9+9i}{-18i}\right)t=\frac{-1-i}{6}+\frac{1+i}{2}t,\,\,x_{13}=0+ \left(1-0\right)t=t,\]
\[x_{21}=\frac{3+3i}{-18i}+ \left(i-\frac{-18}{-18i}\right)t=\frac{-1+i}{6},\]
\[x_{22}=\frac{-3-3i}{-18i}+ \left(0-\frac{-9+9i}{-18i}\right)t=\frac{1-i}{6}+\frac{1+i}{2}t,\,\,x_{23}=0+ \left(1-0\right)t=t,\]
\[\]
\[x_{31}=\frac{-12i}{-18i}+ \left(1-\frac{-18i}{-18i}\right)t=\frac{2}{3},\]
\[x_{32}=\frac{9+3i}{-18i}+ \left(i-\frac{-18}{-18i}\right)t=\frac{-1+3i}{6},\,\,x_{33}=0+ \left(0-0\right)t=0.\]
Then
\[{\rm {\bf X}}= \frac{1}{6}\left(
                                    \begin{array}{ccc}
                                      1+i& -1-i+(3+3i)t& t\\
                                    -1+i& 1-i+(3+3i)t& t\\
                                      4 & -1+3i &0\\
                                    \end{array}
                                  \right)
                                   \]
is  the partial solution of (\ref{eq_ex:X'=AX+B}) .

\end{document}